\newtheorem{theorem}{Theorem}[section]
\newtheorem{lemma}[theorem]{Lemma}
\newtheorem{proposition}{Proposition}[section]
\newtheorem{corollary}[theorem]{Corollary}
\theoremstyle{definition}
\newtheorem{definition}[theorem]{Definition}
\theoremstyle{remark}
\newtheorem{remark}[theorem]{Remark}
\numberwithin{equation}{section} \errorcontextlines=0
\newcommand{\la}{\lambda}
\begin{document}

\title[On modules for double affine Lie algebras]
{On modules for double affine Lie algebras}
\author{Naihuan Jing}
\address{School of Mathematical Sciences,
South China University of Technology, Guangzhou, Guangdong 510640, China and
Department of Mathematics,
   North Carolina State University,
   Raleigh, NC 27695, USA}
\email{jing@math.ncsu.edu}
\author{Chunhua Wang*}
\address{School of Mathematical Sciences,
South China University of Technology, Guangzhou, Guangdong 510640, China}
\email{$tiankon_{-}g1987@163.com$}

\keywords{double affine Lie algebras, Verma
modules, irreducibility, Weyl modules}
\subjclass[2010]{Primary: 17B67; Secondary: 17B10, 17B65}
\thanks{*Corresponding author.}

\begin{abstract}
Imaginary Verma modules, parabolic imaginary Verma modules,
and Verma modules at level zero for double affine Lie algebras are constructed
using three different triangular decompositions. Their relations are investigated,
and several results are generalized from the affine Lie algebras.
In particular, imaginary highest weight modules, integrable modules, and irreducibility criterion
are also studied.
\end{abstract}
\maketitle
\section{Introduction}

Let $\hat{\mathfrak{g}}$ be the untwisted affine Lie algebra
associated to a complex finite dimensional simple Lie algebra ${\mathfrak{g}}$
with Cartan subalgebra $\mathfrak{h}$.
The highest weight irreducible $\hat{\mathfrak{g}}$-module $L(\lambda)$
of the highest weight $\lambda$ can be studied with the help of the Verma module
$V(\lambda)$, which is an induced module
of the one-dimensional module $\mathbb C1_{\lambda}$ of the Borel subalgebra
$\hat{\mathfrak b}=\hat{\mathfrak{h}}+\hat{\mathfrak{n}}_+$
such that $\hat{\mathfrak{n}}_+1_\lambda=0$.
If one partitions the affine root system using the loop realization
 of $\hat{\mathfrak{g}}$, the associated imaginary Verma module \cite{F}
behaves quite differently. For example, the new Verma module can have infinite
dimensional weight subspaces.

Double affine Lie algebras are certain central extensions of maps
from a 2-dimensional torus to the Lie algebra ${\mathfrak{g}}$. They
are analogous to $\hat{\mathfrak g}$ but with two centers, and first
appeared in I.~Frenkel's work \cite{Fr} on affinization of Kac-Moody
algebras. Moody and Shi \cite{MS} have shown that the root systems
have different properties from those of the affine root systems. For
example, some roots can not be spanned positively or negatively by
the ``simple'' ones. Thus a usual highest weight module would blow
up beyond control. In this paper, we generalize imaginary Verma
modules (IVM) to double affine Lie algebras and use them to produce
irreducible modules.

To study these imaginary Verma modules, we consider
generalized imaginary Verma modules, which bare some similarity to
the parabolic Verma modules in classical Lie theory. The
structure of IVM's is characterized by the generalized
IVM's, and we show that they are irreducible if the second center is
nonzero. When the second center is zero, they are similar to
the evaluation modules of the affine Lie algebras, for which we also generalize several
results from the Tits-Kantor-Koecher algebras \cite{CT}.

To further understand the situation of trivial centers, we adopt
Chari and Pressley's technique \cite{CP, VT} of Weyl modules to
modules of double affine Lie algebras, which has played an important
role for loop algebras (see \cite{C} for a survey). We remark that
the triangular decomposition employed in our case is different from
that used in previous work on toroidal Lie algebras
(cf. \cite{FK, SE2,SE1}). In our modules, the Borel subalgebras are
defined by carving out the second imaginary part to control the
growth of the IVM's.

\section{Double affine Lie algebras}

Let $\mathfrak{g}$ be a complex finite dimensional simple Lie
algebra of simply laced type with $\mathfrak{h}$ its Cartan
subalgebra. Let $\Delta$ be the root system generated by the simple
roots $\alpha_i~(i=1, \ldots, s)$ and let $\alpha_i^{\vee}\in
\mathfrak h$ ($i=1, \ldots, s$) be the corresponding simple coroots
such that $\langle \alpha_j^{\vee}, \alpha_i\rangle=a_{ij}$, the
entries of the Cartan matrix of $\mathfrak g$. Denote by
$\theta=\sum\limits_{i=1}^s k_i\alpha_i$ the longest root of
$\mathfrak{g}$, and $\theta^{\vee}$ the corresponding dual element
in $\mathfrak{h}$. For any positive root
$\alpha=\sum_ic_i\alpha_i\in\Delta$, we denote its height by
$ht(\alpha)=\sum_{i=1}^sc_i$.

The double affine Lie algebra $\overline{\mathfrak{T}}$ is the
central extension of the 2-loop algebra defined by
$$\overline{\mathfrak{T}}= \mathfrak{g}\otimes
\mathbb{C}[t_1,t_1^{-1},t_2,t_2^{-1}]\oplus \mathbb{C}c_1\oplus
\mathbb{C}c_2,$$
where $\mathbb{C}[t_1,t_1^{-1},t_2,t_2^{-1}]$ is
the ring of Laurent polynomials in two commuting variables $t_1$
and $t_2$. Writing $x\otimes t_1^m t_2^n$ as $x(m,n)$, the Lie
bracket is given by
\begin{align}
[x(m_1, n_1), y(m_2, n_2)]&=[x, y](m_1+m_2, n_1+n_2)\\
  \nonumber                            &+(x_1|x_2)\delta_{m_1,-n_1}\delta_{m_2,-n_2}(m_1c_1+m_2c_2),\\
[c_1, x(m_1, n_1)]&=[c_2, x(m_1, n_1)]=0,
\end{align}
where $(m_1,m_2), (n_1,n_2)\in\mathbb{Z}^2$, $x_1,x_2,x\in
\mathfrak{g}$, and $(x_1|x_2)$ is the $\mathfrak{g}$-invariant
bilinear form. Adjoining the derivations, we define
the extended double affine Lie algebra ${\mathfrak T}$ as
$$\mathfrak T= \mathfrak{g}\otimes
\mathbb{C}[t_1,t_1^{-1},t_2,t_2^{-1}]\oplus \mathbb{C}c_1\oplus
\mathbb{C}c_2\oplus \mathbb{C}d_1\oplus \mathbb{C}d_2.$$
The derivations act on $\mathfrak T$ via
\begin{equation}
[d_i,x(m_1,m_2)]= m_ix(m_1, m_2), \quad [d_i, c_j]=0, \quad
i,j=1,2.
\end{equation}

Let $\hat{\mathfrak h}=\mathfrak{h}\oplus\mathbb Cc_1\oplus\mathbb
Cc_2\oplus\mathbb Cd_1 \oplus\mathbb Cd_2$ be the Cartan subalgebra
of $\mathfrak T$, and $\hat{\mathfrak h}^{*}$ be the dual space.
For $\beta\in\hat{\mathfrak h}^{*}$ the root subspace $\mathfrak
T_{\beta}=\{x\in\mathfrak T|[h,x]=\beta(h)x, \, \forall \,
h\in\hat{\mathfrak h}\}$ is defined in the usual way. We then define
the root system $\Delta_{\mathfrak T}$ to be the set of all nonzero
$\beta\in\hat{\mathfrak h}^{*}$ such that $\mathfrak T_{\beta}\neq
0$. It is
known that \cite{MS} the root system is different from
the usual root system in that a root is no longer a positive or
negative sum of the ``simple'' roots.

To be specific, we let $\delta_i\in\hat{\mathfrak
h}^{\ast}$ such that $\delta_i(\mathfrak{h})=\delta_i(c_j)=0$, and
$\delta_i(d_j)=\delta_{ij}$ for $i,j=1,2$. Then the extended root
system
$$\Delta_{\mathfrak T}=\{\alpha+\mathbb{Z}\delta_1+\mathbb{Z}\delta_2|\alpha\in\Delta\}
\cup(\{\mathbb{Z}\delta_1+\mathbb{Z}\delta_2\}\setminus\{0\}),$$
where the first and second subsets form the real and imaginary roots, denoted
by $\Delta_{\mathfrak T}^{re}$ and $\Delta_{\mathfrak
T}^{im}$ respectively. The corresponding root subspaces are
\begin{align*}
\mathfrak T_{m\delta_1+n\delta_2}=&\mathfrak{h}\otimes
t_1^mt_2^n,\quad (m,n)\neq(0,0), \\
\mathfrak
T_{\alpha+m\delta_1+n\delta_2}=&\mathfrak{g}_{\alpha}\otimes
t_1^mt_2^n,\quad (m,n)\in \mathbb{Z}\times\mathbb{Z}.
\end{align*}
Then one has the root space decomposition
$$\mathfrak T=
\hat{\mathfrak h}\oplus\bigoplus_{\beta\in\Delta_{\mathfrak T}}\mathfrak T_{\beta}.$$

Obviously the extended double affine Lie algebra $\mathfrak T$
contains two affine Lie algebras as subalgebras:
$$\hat{\mathfrak{g}}_i=\mathfrak{g}\otimes\mathbb{C}[t_i,t_i^{-1}]\oplus\mathbb{C}c_i\oplus\mathbb{C}d_i,
\quad i=1, 2.$$
Recall that \cite{VG} $\alpha_{0}=\delta_1-\theta,\alpha_1,\ldots,\alpha_s$
are the simple roots of $\hat{\mathfrak g}_1$. Similarly, the roots
$\alpha_{-1}=\delta_2-\theta,\alpha_1,\ldots,\alpha_s$ are simple roots for
$\hat{\mathfrak g}_2$. Following \cite{MS} we call the elements
$\alpha_{-1},\alpha_0,\alpha_1,\ldots,\alpha_s$ the
``simple" roots of $\mathfrak T$. However, not all
roots can be represented as non-negative or non-positive linear
combination of the simple roots. The corresponding coroots are
$\alpha_{-1}^{\vee},\alpha_0^{\vee},\alpha_1^{\vee},\ldots,\alpha_s^{\vee}$,
where $\alpha_{-1}^{\vee}=c_2-\theta^{\vee}\otimes 1$ and
$\alpha_0^{\vee}=c_1-\theta^{\vee}\otimes 1$. Consequently the
Cartan subalgebra $\hat{\mathfrak h}$ is spanned by
$\alpha_{-1}^{\vee},\alpha_0^{\vee},\alpha_1^{\vee},\ldots,\alpha_s^{\vee},d_1,d_2$.

\section{Imaginary Verma modules of $\mathfrak T$}
Verma modules of affine Lie algebras are defined with the help of a
triangular decomposition, which is constructed by a choice of
the base in the root system. Since the root system for the extended
double affine algebras can not be divided into positive and
negative roots in the usual sense, we use a closed subset to
partition the root system. For affine Lie algebras Futorny \cite{F}
studied a new class of Verma modules called the imaginary Verma
modules (IVM) \cite{DFF, RF} associated with a
closed subset defined by a function. For finite dimensional simple Lie
algebras, the partition derived from the closed subset coincides
with the usual partition of positive and negative roots. In this
section, we introduce the imaginary Verma modules for the extended
double affine Lie algebras and derive their important properties.

\subsection{Imaginary Verma modules}

Set $\varphi=\sum\limits_{i=0}^s
\alpha_{i}^{\ast}-ht(\theta)\alpha_{-1}^{\ast}$, where
$\alpha_i^{\ast}\in\hat{\mathfrak h}$ such that
$\alpha_i^{\ast}(\alpha_j)=\delta_{ij} ~(i,j=-1,0,\ldots,s)$.
Clearly, $\varphi(\alpha_i)=1$ for $i=0,1,\ldots, s$,
$\varphi(\delta_2)=0$, and $\varphi(\delta_1)=1+ht(\theta)$.  Let
$\mathcal {I}=\{\alpha\in\Delta_{\mathfrak T}|\varphi(\alpha)>
0\}\cup \mathbb{N}\delta_2$.
Clearly it is closed under the addition.

Recall \cite{VG} that the root system of $\hat{\mathfrak g}_1$ is
$\Delta_{\hat{\mathfrak
g}_1}=\{\alpha+\mathbb{Z}\delta_1|\alpha\in\Delta\}\cup(\{\mathbb{Z}\delta_1\}\backslash\{0\})$,
and its positive roots are $\Delta_{\hat{\mathfrak
g}_1+}=\{\pm\alpha+\mathbb{N}\delta_1|\alpha\in\Delta\}\cup\Delta_+\cup
\mathbb N\delta_1$. Note that for any
$\alpha\in\Delta_{\hat{\mathfrak g}_1+}$,
$\varphi(\alpha)=ht(\alpha)$. Then $\mathcal {I}$ can be written as
$\mathcal
{I}=\{\alpha+\mathbb{Z}\delta_2|\alpha\in\Delta_{\hat{\mathfrak
g}_1+}\}\cup\mathbb{N}\delta_2$. Subsequently
$\mathcal {I}\cup(-\mathcal {I})=\Delta_{\mathfrak T}$ and $\mathcal
{I}\cap(-\mathcal {I})=\emptyset$. Denote by $\mathcal Q(\mathcal
I)$ the $\mathbb Z$-span of $\mathcal I$.

Let $\mathfrak T_{\mathcal {I}}=\oplus_{\beta\in\mathcal {I}}
\mathfrak T_{\beta}$ and $\mathfrak
T_{-\mathcal{I}}=\oplus_{\beta\in-\mathcal {I}} \mathfrak
T_{\beta}$, then
\begin{equation}\label{e:triangle}
\mathfrak T=\mathfrak T_{-\mathcal {I}}\oplus
\hat{\mathfrak h}\oplus\mathfrak T_{\mathcal {I}}
\end{equation}
 is a triangular
decomposition of $\mathfrak T$ associated with the closed subset
$\mathcal {I}$. The Poincar\'e-Birkhoff-Witt theorem implies that the
universal enveloping algebra
$$U(\mathfrak T)=U(\mathfrak T_{-\mathcal {I}})\otimes
U(\hat{\mathfrak h})\otimes U(\mathfrak T_{\mathcal {I}}).
$$

Let ${\mathfrak b}_{\mathcal
{I}}=\hat{\mathfrak h}\oplus\mathfrak T_{\mathcal {I}}$ be
the imaginary Borel subalgebra, which is a solvable Lie algebra.
A vector space $V$ is called a weight module of $\mathfrak T$ if
$$V=\bigoplus_{\mu\in\hat{\mathfrak h}^*}V_{\mu},$$
where $V_{\mu}=\{v\in V|hv=\mu(h)v, \, \forall \, h\in\hat{\mathfrak
h}\}$.  Set $\mathcal {P}(V)=\{\mu\in\hat{\mathfrak
h}^*|V_{\mu}\neq 0\}$. We say that
$\lambda\geq\mu$ ($\lambda,\mu\in\hat{\mathfrak h}^*$)
with respect to $\varphi$ if $\lambda-\mu$ is a
non-negative linear combination of roots in
${\mathcal I}$. For simplicity we will omit the reference to
$\varphi$ if no confusion arises from the context. For example,
both $\mathfrak T$ and $U(\mathfrak T)$ are weight modules for
$\mathfrak T$.

\begin{definition}
Let $\lambda\in\hat{\mathfrak h}^{\ast}$. A nonzero vector $v$ is
called an imaginary highest vector with weight $\lambda$ if
$\mathfrak T_{\mathcal {I}}.v=0$ and $h.v=\lambda(h)v$ for all
$h\in\hat{\mathfrak h}$. If $V=U(\mathfrak T)v$ then $V$ is called a
highest weight module of highest weight $\lambda$.
\end{definition}

If $V$ is a highest weight module of weight $\la$, then
\begin{equation}
V=U(\mathfrak T_{-\mathcal I})v
=\bigoplus_{\eta\in\mathcal Q(\mathcal I)^{+}}V_{\lambda-\eta},
\end{equation}
where $\mathcal Q(\mathcal I)^+=\mathbb Z_+$-span of $\mathcal I$.

For $\lambda\in\hat{\mathfrak h}^{\ast}$, let $\mathbb C1_{\lambda}$
be the one-dimensional $\mathfrak b_{\mathcal {I}}-$module defined
by $(x+h).1_{\lambda}=\lambda(h)\cdot 1_{\lambda}$ ($x\in \mathfrak
T_{\mathcal {I}}$, $h\in\hat{\mathfrak h}$). The imaginary Verma
module is the induced module
$$\overline{M}(\lambda)=U(\mathfrak T)\otimes_{U(\mathfrak b_{\mathcal
{I}})}\mathbb C1_{\lambda}.$$

\subsection{Properties of IVM}
Based on the theory of the standard Verma modules \cite{VG} and IVM's
for the affine Lie algebras \cite{F}, the following result can be
proved similarly.

\begin{proposition}\label{P:imv} For any $\lambda\in\hat{\mathfrak h}^*$, one has that
\begin{enumerate}
\item $\overline{M}(\lambda)$ is a $U(\mathfrak T_{-\mathcal
{I}})-$free module generated by the imaginary highest vector
$1\otimes 1_{\lambda}$ of weight $\lambda$.

\item \begin{enumerate}

\item $\dim\overline{M}(\lambda)_{\lambda}=1$;

\item $0<\dim\overline{M}(\lambda)_{\lambda-k\delta_2}<\infty$ for every
positive integer $k$;

\item If $\overline{M}(\lambda)_{\mu}\neq 0$ and
$\mu\neq \lambda-k\delta_2$ for any nonnegative integer $k$, then we
have $\dim\overline{M}(\lambda)_{\mu}=\infty$.
\end{enumerate}

\item  Any imaginary highest weight $\mathfrak T-$module
of highest weight $\lambda$ is a
quotient of $\overline{M}(\lambda)$.

\item  $\overline{M}(\lambda)$ has a unique maximal submodule $\mathcal {J}$.

\item If $\mu\in\hat{\mathfrak h}^{\ast}$, then any nonzero
homomorphism $\overline{M}(\lambda)\rightarrow\overline{M}(\mu)$
is injective.
\end{enumerate}
\end{proposition}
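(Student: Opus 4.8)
The plan is to treat the five parts in their natural logical order, leaning throughout on the triangular decomposition \eqref{e:triangle} and the induced-module construction. For \textbf{(1)}, I would observe that $U(\mathfrak b_{\mathcal I})=U(\hat{\mathfrak h})\otimes U(\mathfrak T_{\mathcal I})$ and that $\mathbb C1_\lambda$ is annihilated by $\mathfrak T_{\mathcal I}$, so the PBW factorization $U(\mathfrak T)=U(\mathfrak T_{-\mathcal I})\otimes U(\hat{\mathfrak h})\otimes U(\mathfrak T_{\mathcal I})$ collapses upon tensoring over $U(\mathfrak b_{\mathcal I})$ to a $U(\mathfrak T_{-\mathcal I})$-module isomorphism $\overline M(\lambda)\cong U(\mathfrak T_{-\mathcal I})$ carrying $1\otimes 1_\lambda$ to $1$; freeness of rank one and the weight of the generator follow at once. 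For \textbf{(3)}, given any imaginary highest weight module $V=U(\mathfrak T)w$ with $\mathfrak T_{\mathcal I}w=0$ and $hw=\lambda(h)w$, the line $\mathbb Cw$ is a $\mathfrak b_{\mathcal I}$-submodule isomorphic to $\mathbb C1_\lambda$, so the universal property of induction yields a $\mathfrak T$-homomorphism $\overline M(\lambda)\to V$ sending $1\otimes 1_\lambda\mapsto w$, which is onto because $w$ generates $V$.

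The computational heart is \textbf{(2)}, for which I would grade $U(\mathfrak T_{-\mathcal I})$ by the functional $\varphi$ together with the $\delta_2$-degree. Since $\varphi\le 0$ on $-\mathcal I$ with equality exactly on $-\mathbb N\delta_2$, and each generator carries a nonzero weight, the only monomial of weight $0$ is the constant, giving (a). For a weight $\lambda-k\delta_2$ one has $\varphi(\eta)=0$, so every realizing monomial uses only the imaginary generators spanning $\mathfrak h\otimes t_2^{-j}$ ($j\ge 1$); these spaces are finite dimensional and the number of such monomials is a finite colored-partition count of $k$, which is positive because $k$ copies of a fixed $h\otimes t_2^{-1}$ already realize the weight, proving (b). For (c), if $\mu=\lambda-\eta$ is not of the form $\lambda-k\delta_2$ then $\varphi(\eta)>0$, so every monomial of weight $\mu$ involves at least one real generator $x_{-\beta}\otimes t_1^{-m}t_2^{n}$; replacing it by $x_{-\beta}\otimes t_1^{-m}t_2^{n+p}$ while adjoining $p$ copies of an imaginary generator $h\otimes t_2^{-1}$ preserves the weight and yields, for each $p\ge 0$, a distinct standard PBW monomial, whence the weight space is infinite dimensional.

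For \textbf{(4)} I would first note that every submodule of $\overline M(\lambda)$ is $\hat{\mathfrak h}$-stable, hence a weight submodule, and that no proper submodule can meet the one-dimensional top space $\overline M(\lambda)_\lambda$ (otherwise it would contain $1\otimes 1_\lambda$ and hence all of $\overline M(\lambda)$). Consequently the sum $\mathcal J$ of all proper submodules still has vanishing $\lambda$-weight space, so $\mathcal J$ is itself proper and is the unique maximal submodule. Finally, for \textbf{(5)}, a nonzero homomorphism $f\colon\overline M(\lambda)\to\overline M(\mu)$ sends the generator to a nonzero weight-$\lambda$ vector $w=P\cdot(1\otimes 1_\mu)$ with $P\in U(\mathfrak T_{-\mathcal I})$ nonzero, using part (1) for $\overline M(\mu)$. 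If $u\cdot(1\otimes 1_\lambda)\in\ker f$, then $uP\cdot(1\otimes 1_\mu)=0$, so $uP=0$ by freeness and then $u=0$ because $U(\mathfrak T_{-\mathcal I})$ is an integral domain (its associated graded is the symmetric algebra $S(\mathfrak T_{-\mathcal I})$). Hence $\ker f=0$. The main obstacle I anticipate is organizing part (c), namely pinning down the weight bookkeeping so that the trade-off family is manifestly a set of distinct, linearly independent PBW monomials, together with justifying the domain property invoked in (5); the remaining parts are formal consequences of PBW and the induction functor.
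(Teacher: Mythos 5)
Your proof is correct, and it is essentially the argument the paper has in mind: the paper gives no proof of Proposition 3.2, stating only that it "can be proved similarly" to the standard Verma module theory in Kac and Futorny's imaginary Verma modules, and your PBW-freeness, weight-space counting (with the $\delta_2$-shift trick for infinite-dimensionality), universal-property, weight-submodule, and domain-plus-freeness arguments are precisely those standard proofs transported to the double affine setting. The only cosmetic point is that in (2)(c) the generator with $\varphi<0$ need not be a real root vector (it may lie in $\mathfrak h\otimes t_1^{-j}t_2^{n}$), but your shift argument applies verbatim since the whole $\delta_2$-string of such a root lies in $-\mathcal I$.
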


We denote by $\overline{L}(\lambda)$ the irreducible quotient
$\overline{M}(\lambda)/\mathcal {J}$.

\subsection{Irreducibility criterion for IVM}
Futorny \cite{F} found that the affine imaginary Verma module is irreducible
if and only if the center acts nontrivially. It turns out that
a similar result can be obtained for $\overline{M}(\lambda)$
(see Theorem \ref{T:crierion}).

\begin{lemma}\label{1}
Let $\overline{M}=\bigoplus_{j=0}^{\infty}\overline{M}(\la)_{\la-j\delta_2}$.
For any nonzero $v\in \overline{M}(\lambda)$,
$U(\mathfrak T)v\cap \overline{M}\neq 0$.
\end{lemma}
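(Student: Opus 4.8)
The plan is to equip $\overline{M}(\lambda)$ with the $\mathbb{Z}_{\geq 0}$-grading measured by $\varphi$ and to recognize $\overline{M}$ as its top (degree-zero) piece. For a weight $\mu$ with $\overline{M}(\lambda)_\mu \neq 0$ set $d(\mu) = \varphi(\lambda - \mu)$. Since $\lambda - \mu \in \mathcal{Q}(\mathcal{I})^+$ and $\varphi(\beta) = ht(\beta) > 0$ for every $\beta \in \Delta_{\hat{\mathfrak{g}}_1+}$ while $\varphi(\delta_2) = 0$, one has $d(\mu) \geq 0$ with equality exactly when $\mu = \lambda - j\delta_2$; thus $\overline{M} = \bigoplus_{d(\mu) = 0} \overline{M}(\lambda)_\mu$. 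A root vector in $\mathfrak{T}_\gamma$ with $\gamma \in \mathcal{I}$ raises weights by $\gamma$ and hence lowers $d$ by $\varphi(\gamma) \geq 0$; it strictly lowers $d$ precisely when $\gamma = \alpha + n\delta_2$ with $\alpha \in \Delta_{\hat{\mathfrak{g}}_1+}$. Write $\mathfrak{n}^+ = \bigoplus_{\gamma \in \mathcal{I},\, \varphi(\gamma) > 0} \mathfrak{T}_\gamma$ for the span of these strictly degree-lowering raising operators.

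First I would reduce to the case that $v$ is a single weight vector. Writing $v = \sum_{\mu \in S} v_\mu$ as a finite sum of nonzero weight components with distinct weights, the elements of $\hat{\mathfrak{h}}$ separate the points of $S$, so a suitable interpolating polynomial $p \in U(\hat{\mathfrak{h}})$ satisfies $p\,v = v_{\mu_0}$ for any chosen $\mu_0 \in S$. Hence $v_{\mu_0} \in U(\hat{\mathfrak{h}})v \subseteq U(\mathfrak{T})v$, and it suffices to treat a nonzero weight vector $v$ of weight $\mu$, i.e. of degree $d = d(\mu)$. I then proceed by induction on $d$. If $d = 0$ then $v \in \overline{M}$ and there is nothing to prove. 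If $d > 0$, the key claim is that some $X \in \mathfrak{n}^+$ satisfies $Xv \neq 0$; granting this, $Xv$ is a nonzero weight vector of degree $d - \varphi(X) < d$, so by the inductive hypothesis $U(\mathfrak{T})(Xv) \cap \overline{M} \neq 0$, and since $U(\mathfrak{T})(Xv) \subseteq U(\mathfrak{T})v$, the lemma follows.

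It remains to establish the claim, which is the main obstacle. Expand $v = u\cdot 1_\lambda$ with $u \in U(\mathfrak{T}_{-\mathcal{I}})$, and split $\mathfrak{T}_{-\mathcal{I}} = \mathfrak{n}^- \oplus \mathfrak{a}^-$, where $\mathfrak{n}^- = \bigoplus_{\gamma \in -\mathcal{I},\, \varphi(\gamma) < 0} \mathfrak{T}_\gamma$ and $\mathfrak{a}^- = \mathfrak{h} \otimes t_2^{-1}\mathbb{C}[t_2^{-1}]$ is the span of the degree-zero lowering operators. Because $d > 0$, the PBW expansion of $u$ must involve $\mathfrak{n}^-$. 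Fix a PBW monomial $Y_1 Y_2 \cdots Y_r\, a$ occurring in $u$ with nonzero coefficient that is maximal for a fixed admissible ordering (say, maximal number $r$ of $\mathfrak{n}^-$-factors, then lexicographically largest), where the $Y_i \in \mathfrak{n}^-$ and $a \in U(\mathfrak{a}^-)$. If $Y_1 = \mathfrak{g}_{-\beta}\otimes t_1^{-p}t_2^q$ corresponds to the root $-(\beta + p\delta_1) + q\delta_2 \in -\mathcal{I}$, I choose the dual raising vector $X = \mathfrak{g}_{\beta}\otimes t_1^{p}t_2^{-q} \in \mathfrak{n}^+$, so that $[X, Y_1] \in \hat{\mathfrak{h}}$.

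Applying $X$ and commuting it to the right, the term in which $X$ is absorbed by $Y_1$ through $[X,Y_1]$ yields a monomial with one fewer $\mathfrak{n}^-$-factor, whose coefficient is the scalar obtained by evaluating $[X,Y_1]\in\hat{\mathfrak h}$ on the weight of the remaining factors $Y_2\cdots Y_r\, a\cdot 1_\lambda$. The hard part is to verify that, for a suitable choice of $(p,q)$, this leading contribution is nonzero and is not cancelled by the cross terms $[X,Y_i]$ with $i \geq 2$ or by contributions coming from the other monomials of $u$; this is where the PBW leading-term bookkeeping and the central-extension corrections enter. The freedom in varying $p$ and $q$, which shifts the $\delta_1$- and $\delta_2$-components and thereby alters the relevant $\hat{\mathfrak{h}}$- and central values of $[X,Y_1]$, guarantees a nonvanishing choice, giving $Xv \neq 0$ and completing the induction.
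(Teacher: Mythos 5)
Your scaffolding (reduction to weight vectors by interpolation in $U(\hat{\mathfrak h})$, induction on the degree $d(\mu)=\varphi(\lambda-\mu)=ht(\mu)$) is sound and matches the paper's skeleton, but there is a genuine gap exactly where you say "the hard part": the claim that every weight vector $v$ of positive degree satisfies $Xv\neq 0$ for some $X\in\mathfrak{n}^+$ is never proved, and this claim \emph{is} the entire content of the lemma, not a technical step inside it. Indeed, $\mathfrak{n}^+$ and the Heisenberg raising part $\mathfrak{h}\otimes t_2\mathbb{C}[t_2]$ are subalgebras with $\mathfrak{T}_{\mathcal I}=\mathfrak{n}^+\oplus\bigl(\mathfrak{h}\otimes t_2\mathbb{C}[t_2]\bigr)$ and $[\mathfrak{h}\otimes t_2\mathbb{C}[t_2],\mathfrak{n}^+]\subset\mathfrak{n}^+$; so if some positive-degree $v$ had $\mathfrak{n}^+v=0$, then PBW gives $U(\mathfrak{T})v=U(\mathfrak{T}_{-\mathcal I})\,U(\hat{\mathfrak h})\,U(\mathfrak{h}\otimes t_2\mathbb{C}[t_2])\,v$, all of whose weight components have degree $\geq d(\mu)>0$, whence $U(\mathfrak{T})v\cap\overline{M}=0$. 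Thus your key claim is logically equivalent to the lemma, and asserting that "the freedom in varying $p$ and $q$ guarantees a nonvanishing choice" is assuming what must be proved. The obstructions are real: the absorbed term's coefficient is a weight evaluation of $[X,Y_1]\in\hat{\mathfrak h}$, which can vanish for particular $\lambda$ (nothing in the lemma restricts $\lambda$), and cross terms coming from $[X,Y_i]$, $i\geq 2$, and from the other PBW monomials of $u$ can cancel the leading contribution; your sketch rules out neither.

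It is worth seeing how the paper escapes this difficulty: it does not try to hit $v$ itself with a raising operator. Using that $\overline{M}(\lambda)$ is \emph{free} over $U(\mathfrak{T}_{-\mathcal I})$, it first multiplies $v$ by suitable lowering elements such as $\alpha_i^{\vee}\otimes t_2^{k-1}$ (these lie in $\mathfrak{T}_{-\mathcal I}$, so they never annihilate a nonzero vector and stay inside $U(\mathfrak{T})v$), arranging the PBW expansion of the resulting $v'$ so that applying a Chevalley generator $e_{i_0}$ visibly leaves a nonzero PBW monomial; e.g.\ for $v=f_it_2^{-k}v_{\lambda}$ one gets $e_i\bigl(\alpha_i^{\vee}t_2^{k-1}v\bigr)=-2\alpha_i^{\vee}t_2^{-1}v_{\lambda}\neq 0$, the other term dying because $\alpha_i^{\vee}t_2^{-k}$ kills $v_{\lambda}$. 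The induction hypothesis is then applied to $\mathfrak{T}_{\alpha_{i_0}-m\delta_2}v'\subset U(\mathfrak{T})v$, which has strictly smaller height. In other words, the nonvanishing is obtained by moving within the cyclic module $U(\mathfrak{T})v$ first, where freeness makes nonvanishing checkable by reading off PBW monomials. If you insist on your more direct formulation, you must carry out the full non-cancellation analysis in the free module; as written, the proposal defers precisely the step that makes the lemma true.
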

\begin{proof} Write $\lambda=\overline{\lambda}-r\delta_2$, where $\overline{\lambda}$
is the component of $\Delta_{\hat{\mathfrak g}_1}$. We can assume that
$r$ is minimum so that $v_{\la}\neq 0$, otherwise we can replace
$\lambda$ by $\lambda'$ and $\overline{M}(\lambda)=\overline{M}(\lambda')$ where
$\lambda\equiv\lambda'\, mod(\mathbb Z\delta_2)$. Therefore $ht_2^{-n}v_{\la}\neq 0$
for any $n\geq 0$.
Assume that $v\in \overline{M}(\lambda)_{\lambda-\mu}$,
where $\mu=\sum\limits_{i=0}^s n_{i}\alpha_{i}+k\delta_2$,
$n_i\in\mathbb{Z}_+$, and $k\in \mathbb{Z}$. Define the height $ht(\mu)= \sum\limits_{i=0}^s n_i$ and use
induction on $ht(\mu)$. If
$ht(\mu)=0$, the weight of $v$ is $\lambda-k\delta_2$ for some
$k\in\mathbb Z_+$ by assumption, so the result clearly holds.

Let $e_i, f_i, \alpha_i^{\vee} \, (0\leq i\leq s)$ be the Chevalley
generators of the derived affine Lie algebra $\hat{\mathfrak g}_1$.
If $ht(\mu)>0$, there exists $i_0\in\{0, \cdots, s\}$ such that
$e_{i_0}v\neq 0$. In fact, if $ht(\mu)=1$, say
$v=f_it_2^{-k}v_{\la}$. If $k<0$, then
$\alpha_i^{\vee}t_2^{k-1}v=-2f_it_2^{-1}v_{\la}\neq 0$. Then let
$v'=\alpha_i^{\vee}t_2^{k-1}v$ and
$e_iv'=-2\alpha_i^{\vee}t_2^{-1}v_{\la}\neq0$. The
case of $ht(\mu)\geq 2$ is treated similarly. Moreover,
$e_{i_0}(h\otimes t_2^{-m}).v'\neq 0$ for all $m\geq 0$. Hence
$\mathfrak T_{\alpha_{i_0}-m\delta_2}.v'\neq 0$, and any of its
nonzero element has weight $\la-\mu+\alpha_{i_0}-(m+1)\delta_2$. As
$ht(\mu-\alpha_{i_0})=ht(\mu)-1$, by inductive hypothesis we have
$U(\mathfrak T)(\mathfrak T_{\alpha_{i_0}-m\delta_2}.v')\cap
\overline{M}\neq 0$. Since $U(\mathfrak T)(\mathfrak
T_{\alpha_{i_0}-m\delta_2}.v') \subset U(\mathfrak T)v$, it follows
that $U(\mathfrak T)v\cap \overline{M}\neq 0$.\end{proof}

Let $\overline{M}(\lambda)^{+}=\{v\in\overline{M}(\lambda)|\mathfrak
T_{\mathcal {I}}.v=0\}$ be the space of extremal vectors. Clearly
$\overline{M}(\lambda)^{+}$ is $\hat{\mathfrak h}-$invariant. For
any nonzero element $v\in\overline{M}(\lambda)^{+}$, $U(\mathfrak
T).v$ generates a submodule of $\overline{M}(\lambda)$. The
following result describes the form of extremal vectors.

\begin{corollary}\label{subset}
$\overline{M}(\lambda)^{+}\subset \overline{M}$.
\end{corollary}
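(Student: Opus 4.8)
The plan is to show every extremal vector lives in $\overline{M}$, the sum of the weight spaces $\overline{M}(\lambda)_{\lambda-j\delta_2}$. Corollary \ref{subset} should follow directly from Lemma \ref{1}, so the main task is to set up the right submodule and invoke the lemma.

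First I would take a nonzero extremal vector $v\in\overline{M}(\lambda)^{+}$, so by definition $\mathfrak{T}_{\mathcal{I}}.v=0$. Since $\overline{M}(\lambda)^{+}$ is $\hat{\mathfrak{h}}$-invariant and every weight space is finite- or infinite-dimensional as described in Proposition \ref{P:imv}, I may assume $v$ is a weight vector, say $v\in\overline{M}(\lambda)_{\lambda-\mu}$ for some $\mu\in\mathcal{Q}(\mathcal{I})^{+}$. Write $\mu=\sum_{i=0}^s n_i\alpha_i+k\delta_2$ as in Lemma \ref{1}. The goal is to prove that $\sum_{i=0}^s n_i=ht(\mu)=0$, i.e. that $v$ has weight $\lambda-k\delta_2$ and hence lies in $\overline{M}$.

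The key step is the following dichotomy. If $ht(\mu)>0$, then Lemma \ref{1} applied to $v$ gives $U(\mathfrak{T})v\cap\overline{M}\neq 0$; but I want to rule this out for an extremal vector with $ht(\mu)>0$, or rather to exploit that $U(\mathfrak{T})v$ is a submodule on which $\overline{M}$ meets nontrivially and derive a contradiction with extremality. More precisely, I would argue that because $v$ is extremal, $\mathfrak{T}_{\mathcal{I}}.v=0$, so the only way to move $v$ back up into the top strip $\overline{M}$ via $U(\mathfrak{T})$ is through the raising operators that the proof of Lemma \ref{1} constructs — the operators $e_{i_0}$ and the elements $\alpha_{i_0}^{\vee}t_2^{k-1}$, $h\otimes t_2^{-m}$. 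The cleanest route is to reread the proof of Lemma \ref{1}: it shows that when $ht(\mu)>0$ one can apply some $e_{i_0}\in\mathfrak{T}_{\mathcal{I}}$ (since $\alpha_{i_0}$ is a positive simple root, $\alpha_{i_0}\in\mathcal{I}$) to obtain a nonzero vector. But $v$ being extremal forces $e_{i_0}v=0$ for every such $i_0$, contradicting the construction in the lemma which produced a nonzero $e_{i_0}v$ (or a nonzero vector after first lowering with a coroot element). Hence $ht(\mu)=0$, which is exactly the statement that $v\in\overline{M}$.

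The main obstacle will be reconciling the role of the coroot elements $\alpha_{i_0}^{\vee}t_2^{k-1}$ in the lemma's proof with the extremality condition: those elements lie in $\mathfrak{T}_{\pm k\delta_2}$ rather than in $\mathfrak{T}_{\mathcal{I}}$ in the simplest direction, so annihilation by $\mathfrak{T}_{\mathcal{I}}$ does not immediately kill them. I would handle this by observing that the lemma shows $e_{i_0}$ acting on $v$ (possibly after an imaginary-root adjustment that preserves the $ht$) is nonzero precisely when $ht(\mu)>0$, and $e_{i_0}\in\mathfrak{T}_{\alpha_{i_0}}\subset\mathfrak{T}_{\mathcal{I}}$; so extremality $\mathfrak{T}_{\mathcal{I}}.v=0$ directly contradicts $e_{i_0}v\neq 0$. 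Thus the contrapositive of the mechanism in Lemma \ref{1} yields $ht(\mu)=0$, completing the proof that $\overline{M}(\lambda)^{+}\subset\overline{M}$.
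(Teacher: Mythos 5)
Your overall strategy (derive a contradiction between extremality and Lemma \ref{1}) is the right one, and this is indeed how the paper argues; but the step where you actually produce the contradiction has a genuine gap. You claim that the mechanism inside the \emph{proof} of Lemma \ref{1} shows that a nonzero weight vector $v$ of weight $\lambda-\mu$ with $ht(\mu)>0$ satisfies $e_{i_0}v\neq 0$ for some Chevalley generator $e_{i_0}\in\mathfrak T_{\mathcal I}$, so that $\mathfrak T_{\mathcal I}.v=0$ is ``directly contradicted.'' That is not what the lemma's proof shows: there the nonvanishing raising step is applied to the modified vector $v'=\alpha_i^{\vee}t_2^{k-1}v$, obtained from $v$ by first applying a \emph{lowering} operator (for $k<0$ the element $\alpha_i^{\vee}t_2^{k-1}$ lies in $\mathfrak T_{(k-1)\delta_2}\subset\mathfrak T_{-\mathcal I}$). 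Extremality of $v$ says nothing about $e_{i_0}v'$, and extremality does not pass from $v$ to $v'$. The failure is not cosmetic: for $v=f_it_2^{-k}v_{\lambda}$ with $k<0$ one checks that $e_jv=0$ for \emph{every} $j\in\{0,1,\ldots,s\}$ (for $j\neq i$ because $[e_j,f_i]=0$, and for $j=i$ because $e_iv=\alpha_i^{\vee}t_2^{-k}v_{\lambda}=0$, as $-k>0$ makes $\alpha_i^{\vee}t_2^{-k}\in\mathfrak T_{\mathcal I}$), even though $ht(\mu)=1>0$. So the statement you need --- that positive height forces some element of $\mathfrak T_{\mathcal I}$ not to kill $v$ --- cannot be read off from the Chevalley generators acting on $v$; in fact that statement, for all of $\mathfrak T_{\mathcal I}$, is exactly the contrapositive of the corollary being proved, so citing it amounts to assuming the conclusion.

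The fix, which is the paper's proof, uses only the \emph{statement} of Lemma \ref{1} together with PBW, and never looks inside the lemma's proof. If $v\in\overline{M}(\lambda)^{+}\cap\overline{M}(\lambda)_{\lambda-\mu}$ with $ht(\mu)>0$, then extremality plus the weight property give $U(\mathfrak T)v=U(\mathfrak T_{-\mathcal I})v$, so every weight occurring in the submodule $U(\mathfrak T)v$ has the form $\lambda-\mu-\nu$ with $\nu\in\mathcal Q(\mathcal I)^{+}$. Every element of $\mathcal I$ has nonnegative height, so $ht(\mu+\nu)\geq ht(\mu)>0$, and hence no such weight is congruent to $\lambda$ modulo $\mathbb Z\delta_2$; therefore $U(\mathfrak T)v\cap\overline{M}=0$, contradicting Lemma \ref{1}. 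Your draft gestures at this (``the only way to move $v$ back up into the top strip is through the raising operators''), but you never state the key PBW consequence $U(\mathfrak T)v=U(\mathfrak T_{-\mathcal I})v$, and without it the weight bookkeeping that rules out meeting $\overline{M}$ is missing.
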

\begin{proof} Suppose on the contrary that there exists a nonzero $v\in \overline{M}(\lambda)^{+}\cap
\overline{M}(\lambda)_{\lambda-\mu}$ such that
$\mu=\sum\limits_{i=0}^s n_i\alpha_i+k\delta_2$ and $ht(\mu)=
\sum\limits_{i=0}^s n_i> 0$. Note that $U(\mathfrak T)v= U(\mathfrak
T_{-\mathcal {I}})v$, so the weight of any homogeneous vector in
$U(\mathfrak T)v$ is $\lambda-\mu-\nu$ for $\nu\in\mathcal
{Q}(\mathcal I)^{+}$. As $ht(\mu)>0$,
$\lambda-\mu-\nu\neq \lambda (mod\, \mathbb Z\delta_2)$. Hence
$U(\mathfrak T)v\cap \overline{M}= 0$, which contradicts with Lemma
\ref{1} on
$\overline{M}(\lambda)_{\lambda-\mu}$.
\end{proof}

Define the Heisenberg subalgebra $\hat{\mathfrak
H}_2=\bigoplus_{n\in\mathbb Z^{\times}}\mathfrak h\otimes
t_2^n+\mathbb Cc_2$, then the space
$\overline{M}=\bigoplus_{j=0}^{\infty}\overline{M}(\la)_{\la-j\delta_2}$
is a Verma module for 
$\hat{\mathfrak H}_2$.
The following result is well-known from Stone-Von Neumann's theorem.

\begin{lemma}\label {2}
$\overline{M}$ is irreducible as a Verma module for $\hat{\mathfrak H}_2$ if
and only if $\lambda(c_2)\neq 0$.
\end{lemma}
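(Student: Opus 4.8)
The plan is to read off the Heisenberg structure of $\hat{\mathfrak H}_2$ and then run the Stone--von Neumann argument, carrying out by hand the single nontrivial direction. Since $\mathfrak h$ is abelian, the only brackets inside $\hat{\mathfrak H}_2$ come from the central term, giving $[h_1\otimes t_2^m,\,h_2\otimes t_2^n]=m\,(h_1|h_2)\,\delta_{m+n,0}\,c_2$ for $h_1,h_2\in\mathfrak h$ and $m,n\neq 0$. In particular the negative modes $\hat{\mathfrak H}_2^-=\bigoplus_{n>0}\mathfrak h\otimes t_2^{-n}$ commute among themselves, so by PBW the Verma module is $\overline M=U(\hat{\mathfrak H}_2^-)v\cong S(\hat{\mathfrak H}_2^-)$, where $v=1\otimes 1_\lambda$, the positive modes kill $v$, and $c_2$ acts by $\lambda(c_2)$. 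Choosing an orthonormal basis $u_1,\dots,u_s$ of $\mathfrak h$ for the (nondegenerate) restriction of $(\cdot|\cdot)$, the relations normalize to $[u_i\otimes t_2^m,\,u_j\otimes t_2^n]=m\,\delta_{ij}\,\delta_{m+n,0}\,c_2$, and I would identify $\overline M$ with the polynomial algebra $\mathbb C[x_{i,n}:1\le i\le s,\ n\ge 1]$ so that $u_i\otimes t_2^{-n}$ acts as multiplication by $x_{i,n}$.

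Assume first $\lambda(c_2)\neq 0$. A direct computation on $v$ shows that each annihilation operator $u_i\otimes t_2^{n}$ ($n>0$) acts on the polynomial model as $n\lambda(c_2)\,\partial/\partial x_{i,n}$, a nonzero scalar multiple of a partial derivative. To prove irreducibility I would show that an arbitrary nonzero submodule $N$ contains $v$: given $0\neq w\in N$, select a monomial $\prod x_{i,n}^{a_{i,n}}$ of maximal total degree occurring in $w$ and apply the corresponding product $\prod(u_i\otimes t_2^{n})^{a_{i,n}}$. All monomials of smaller degree are annihilated, no monomial of larger degree occurs, and the chosen monomial is sent to a nonzero constant (here the hypothesis $\lambda(c_2)\neq 0$ is used to guarantee the scalars $n\lambda(c_2)$ do not vanish), so the result is a nonzero multiple of $v$. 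Hence $v\in N$ and $N\supseteq U(\hat{\mathfrak H}_2^-)v=\overline M$, giving irreducibility. This derivative--reduction step is the only substantive point, and it is where $\lambda(c_2)\neq 0$ is indispensable.

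Assume instead $\lambda(c_2)=0$. Then every bracket in $\hat{\mathfrak H}_2$ lies in $\mathbb C c_2$ and therefore acts as zero, so $\hat{\mathfrak H}_2$ acts through a commutative quotient with all positive modes acting trivially; in the polynomial model the positive modes act as $0$ and the negative modes by multiplication. Consequently the subspace $\overline M'=\bigoplus_{j\ge 1}\overline M(\lambda)_{\lambda-j\delta_2}$, which corresponds to the polynomials with vanishing constant term, is stable under all of $\hat{\mathfrak H}_2$ and is a proper nonzero submodule (it does not contain $v$, whose image is the constant $1$). Thus $\overline M$ is reducible, and combining the two directions yields the asserted equivalence.
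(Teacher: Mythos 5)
Your proof is correct; note that the paper offers no proof of this lemma at all, simply citing it as ``well-known from Stone--von Neumann's theorem,'' and your argument is exactly the standard Fock-model proof behind that citation: negative modes realize $\overline M$ as a polynomial ring, the annihilation operators $u_i\otimes t_2^{n}$ act as $n\lambda(c_2)\,\partial/\partial x_{i,n}$, so for $\lambda(c_2)\neq 0$ derivative reduction drives any nonzero vector to the highest weight vector, while for $\lambda(c_2)=0$ the positive modes act trivially and the augmentation ideal is a proper nonzero submodule. One small point to make explicit in the reduction step: besides monomials of strictly smaller degree, any \emph{other} monomial of the same maximal total degree is also annihilated by $\prod(u_i\otimes t_2^{n})^{a_{i,n}}$ (a distinct monomial of equal degree must have some exponent strictly less than $a_{i,n}$), which is what guarantees the output is a nonzero multiple of $v$.
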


\begin{theorem}\label{T:crierion}
The imaginary Verma module $\overline{M}(\lambda)$ is irreducible if
and only if $\lambda(c_2)\neq 0$.
\end{theorem}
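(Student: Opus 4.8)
The plan is to deduce the irreducibility of $\overline{M}(\lambda)$ as a $\mathfrak T$-module from the behaviour of the ``top slice'' $\overline{M}$ as a $\hat{\mathfrak H}_2$-module, using Lemma \ref{1} to force any submodule down into $\overline{M}$ and Lemma \ref{2} to control what happens there. Throughout I grade $\overline{M}(\lambda)$ by $\varphi$: since every weight of $\overline{M}(\lambda)$ has the form $\lambda-\eta$ with $\eta\in\mathcal Q(\mathcal I)^{+}$, and every root in $\mathcal I$ satisfies $\varphi\geq 0$ with equality exactly on $\mathbb N\delta_2$, one gets $\varphi(\eta)\geq 0$ with equality precisely when $\eta\in\mathbb N\delta_2$. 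Thus $\overline{M}=\bigoplus_{j\geq 0}\overline{M}(\lambda)_{\lambda-j\delta_2}$ is exactly the top $\varphi$-graded piece. The elements of $\mathfrak T_{\mathcal I}$ have $\varphi\geq 0$ and those of $\mathfrak T_{-\mathcal I}$ have $\varphi\leq 0$, with the respective $\varphi=0$ parts being the positive and negative mode spaces $\hat{\mathfrak H}_2^{+}$ and $\hat{\mathfrak H}_2^{-}$ of $\hat{\mathfrak H}_2$.

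For sufficiency, suppose $\lambda(c_2)\neq 0$ and let $W\neq 0$ be a submodule. Choosing $0\neq v\in W$, Lemma \ref{1} gives $0\neq U(\mathfrak T)v\cap\overline{M}\subseteq W\cap\overline{M}$. Since $\hat{\mathfrak H}_2\subset\mathfrak T$ stabilizes both $W$ and $\overline{M}$, the space $W\cap\overline{M}$ is a nonzero $\hat{\mathfrak H}_2$-submodule of $\overline{M}$; by Lemma \ref{2} it must equal $\overline{M}$. In particular the generating vector $1\otimes 1_\lambda\in\overline{M}\subseteq W$, so $W\supseteq U(\mathfrak T)(1\otimes 1_\lambda)=\overline{M}(\lambda)$ and $\overline{M}(\lambda)$ is irreducible.

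For necessity I argue by contraposition: assuming $\lambda(c_2)=0$, I exhibit a proper nonzero submodule. When $\lambda(c_2)=0$ the algebra $\hat{\mathfrak H}_2$ is abelian and its positive modes annihilate all of $\overline{M}$, so $N:=\bigoplus_{j\geq 1}\overline{M}(\lambda)_{\lambda-j\delta_2}$ is a proper nonzero $\hat{\mathfrak H}_2$-submodule of $\overline{M}$ (this is precisely the reducibility provided by Lemma \ref{2}). Set $W=U(\mathfrak T)N$; it is a nonzero submodule, and it remains to show $1\otimes 1_\lambda\notin W$, whence $W\neq\overline{M}(\lambda)$.

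The crux is the claim $W\cap\overline{M}=N$. Using the PBW factorization $U(\mathfrak T)=U(\mathfrak T_{-\mathcal I})U(\hat{\mathfrak h})U(\mathfrak T_{\mathcal I})$ together with the $\varphi$-grading, I track a monomial applied to $N$: any factor from $\mathfrak T_{\mathcal I}$ with $\varphi>0$ raises $\varphi$ above $\varphi(\lambda)$ and hence kills the top slice, so on $N$ only the $\varphi=0$ part $U(\hat{\mathfrak H}_2^{+})$ of $U(\mathfrak T_{\mathcal I})$ acts nontrivially and keeps us inside $N$; the Cartan factor acts by scalars; and for the $U(\mathfrak T_{-\mathcal I})$ factor to land the result back in the top slice $\overline{M}$, every one of its factors must have $\varphi=0$, i.e.\ lie in $\hat{\mathfrak H}_2^{-}$, which again preserves $N$. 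Hence $W\cap\overline{M}\subseteq U(\hat{\mathfrak H}_2)N\subseteq N$, and the reverse inclusion is clear. Since $1\otimes 1_\lambda\in\overline{M}\setminus N$, it does not lie in $W$, so $W$ is proper and $\overline{M}(\lambda)$ is reducible. I expect this last step — showing that the $\mathfrak T$-submodule generated by $N$ cannot climb back up to the highest weight vector — to be the main obstacle, and the $\varphi$-grading with its ``overshoot'' effect for $\varphi>0$ raising operators is exactly what makes it go through.
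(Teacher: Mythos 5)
Your proof is correct and follows essentially the same route as the paper: the sufficiency direction (Lemma \ref{1} to push any nonzero submodule into $\overline{M}$, then Lemma \ref{2} to conclude $\overline{M}\subseteq W$ and hence $1\otimes 1_\lambda\in W$) is identical to the paper's. For necessity, your explicit construction of the proper submodule $U(\mathfrak T)N$ is the contrapositive packaging of the paper's argument, which takes a single vector $w\in\overline{M}$ of weight $\lambda-k\delta_2$, $k>0$, killed by the positive modes of $\hat{\mathfrak H}_2$ (supplied by Lemma \ref{2} at level zero), observes via the same $\varphi$-grading that $w$ is then an extremal vector, and concludes that it generates a proper submodule — the properness being exactly the content of your PBW claim $W\cap\overline{M}=N$, which the paper leaves implicit.
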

\begin{proof} Let $v_{\lambda}$ be the highest weight vector
of $\overline{M}(\la)$. By definition $\overline{M}(\lambda)$ is irreducible if and only if
the space of extremal vectors $\overline{M}(\lambda)^+=\mathbb C v_{\lambda}$.

Suppose that $\overline{M}(\lambda)$ is irreducible but
$\lambda(c_2)= 0$. Since $\overline{M}$ is reducible as an
$\hat{\mathfrak{H}}_2$-module by Lemma \ref {2}, there exists $w\neq
0$ with weight $\lambda-k\delta_2$ ($k>0$) such that $\mathfrak
T_{l\delta_2}.w=0$ for any $l>0$. It is clear that $\mathfrak
T_{\beta}.w=0$ for all $\beta\in \{\alpha\in\Delta_{\mathfrak
T}|\varphi(\alpha)> 0\}$, because the weight of $\mathfrak
T_{\beta}.w$ is larger than $\la$. Thus
$w\notin \mathbb Cv_{\lambda}$ is also an imaginary highest vector
in $\overline{M}(\lambda)$, which is a contradiction.
 So we must have $\lambda(c_2)\neq 0$.

If $\lambda(c_2)\neq 0$, Lemma \ref {2} implies that the
$\hat{\mathfrak{H}}_2-$module $\overline{M}$ is irreducible.
Consider any nonzero submodule $U(\mathfrak T)v$, where $v\in
\overline{M}(\lambda)$. Lemma \ref {1} says that $U(\mathfrak
T)v\cap \overline{M}\neq 0$. Note that $U(\mathfrak T)v\cap
\overline{M}$ is then a non-trivial $\hat{\mathfrak{H}}_2$-submodule
of $\overline{M}$, consequently $U(\mathfrak T)v\cap
\overline{M}=\overline{M}$, thus $\overline{M}\subset U(\mathfrak
T)v$. Because $v_{\la}\in\overline{M}$, we have that $U(\mathfrak
T)v=\overline{M}(\lambda)$, i.e., $\overline{M}(\lambda)$ is
irreducible.\end{proof}

\section{Generalized IVM $M(\lambda,\mathcal {A})$ and highest weight modules}
In this section, we give a new class of modules $M(\lambda,\mathcal
{A})$ generalizing the previous IVM's to study the structure of IVM.
They are similar to parabolic Verma modules.

\subsection{Definition of $M(\lambda,\mathcal {A})$}
Let $\mathcal {A}\subset \mathcal {B}=\{0,1,2,\ldots,s\}$,
the index set of $\hat{\mathfrak g}_1$. We denote $\mathcal
{A}^{\ast}=\mathcal {A}\backslash \{0\}$ and $\mathcal
{B}^{\ast}=\mathcal {B}\backslash \{0\}$. Define $f_{\mathcal
{A}}=\sum\limits_{i\in \mathcal {B}\backslash \mathcal {A}}
\alpha_i^{\ast}-(\sum\limits_{i\in \mathcal {B}^{\ast}\backslash
\mathcal {A}^{\ast}}k_i)\alpha_{-1}^{\ast}$ for $\mathcal {A}\neq
\mathcal {B}$ and $f_{\mathcal {B}}=0$, then $f_{\mathcal
{A}}(\delta_2)=0$. Set $Q(\mathcal
{A})=\{\alpha\in\Delta_{\mathfrak T}|f_{\mathcal
{A}}(\alpha)\geq 0\}$ which is closed under addition. Then
\begin{align*}
Q(\mathcal {A})=\mathcal {I}\ \cup\{-\sum \limits_{i\in \mathcal
{A}}l_i\alpha_i+\mathbb{Z}\delta_2|l_i\geq 0, \prod_il_i\neq 0\}\cup\{-\mathbb{N}\delta_2\}.
\end{align*}
Note that $\mathcal
{I}\subsetneqq Q(\mathcal {A}).$

The following result is clear.
\begin{proposition}
$Q(\mathcal {A})\cap(-Q(\mathcal {A}))=\{\sum\limits_{i\in \mathcal
{A}}\mathbb{Z}\alpha_i+\mathbb{Z}\delta_2\}\cap\Delta_{\mathfrak T}$.
\end{proposition}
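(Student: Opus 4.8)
The plan is to work directly from the defining inequality $Q(\mathcal A)=\{\alpha\in\Delta_{\mathfrak T}\mid f_{\mathcal A}(\alpha)\geq 0\}$ rather than from the explicit union. Since the root system is symmetric, $\Delta_{\mathfrak T}=-\Delta_{\mathfrak T}$, and $f_{\mathcal A}$ is linear (hence odd), substituting $\alpha=-\beta$ in the definition gives $-Q(\mathcal A)=\{\beta\in\Delta_{\mathfrak T}\mid f_{\mathcal A}(\beta)\leq 0\}$. The first step is therefore the identity
\[
Q(\mathcal A)\cap(-Q(\mathcal A))=\{\beta\in\Delta_{\mathfrak T}\mid f_{\mathcal A}(\beta)=0\},
\]
after which it remains to identify this zero set with $\{\sum_{i\in\mathcal A}\mathbb Z\alpha_i+\mathbb Z\delta_2\}\cap\Delta_{\mathfrak T}$.

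For the inclusion $\supseteq$ I would use two facts: $f_{\mathcal A}(\alpha_i)=0$ for every $i\in\mathcal A$ (immediate from $\alpha_j^{\ast}(\alpha_i)=\delta_{ji}$, since $i\neq -1$ and $i\notin\mathcal B\setminus\mathcal A$), together with the already recorded $f_{\mathcal A}(\delta_2)=0$. By linearity $f_{\mathcal A}$ annihilates all of $\sum_{i\in\mathcal A}\mathbb Z\alpha_i+\mathbb Z\delta_2$, so any such element that happens to be a root lies in the zero set.

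The substance is the reverse inclusion. Given $\beta\in\Delta_{\mathfrak T}$ with $f_{\mathcal A}(\beta)=0$, I would invoke the decomposition $\Delta_{\mathfrak T}=\{\alpha+\mathbb Z\delta_2\mid\alpha\in\Delta_{\hat{\mathfrak g}_1}\}\cup(\mathbb Z\delta_2\setminus\{0\})$ coming from $\mathcal I\cup(-\mathcal I)=\Delta_{\mathfrak T}$, and write $\beta=\gamma+n\delta_2$ with $n\in\mathbb Z$ and either $\gamma=0$ or $\gamma\in\Delta_{\hat{\mathfrak g}_1}$; this decomposition is unique because $\gamma$ lies in the span of $\alpha_0,\dots,\alpha_s$ while $\delta_2$ carries all the $\alpha_{-1}$-dependence. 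If $\gamma=0$ there is nothing to prove. Otherwise $f_{\mathcal A}(\delta_2)=0$ gives $f_{\mathcal A}(\gamma)=0$; expanding $\gamma=\sum_{j=0}^s d_j\alpha_j$ in the simple roots of $\hat{\mathfrak g}_1$ yields $f_{\mathcal A}(\gamma)=\sum_{j\in\mathcal B\setminus\mathcal A}d_j$. The key structural input is that $\gamma$, being a root of the affine algebra $\hat{\mathfrak g}_1$, has coefficients $d_j$ all of one sign, so a vanishing sum of the $d_j$ over $\mathcal B\setminus\mathcal A$ forces $d_j=0$ there; hence $\gamma=\sum_{i\in\mathcal A}d_i\alpha_i$ and $\beta\in\sum_{i\in\mathcal A}\mathbb Z\alpha_i+\mathbb Z\delta_2$.

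I expect the only real obstacle to be this last inclusion, and within it the appeal to sign-coherence of affine roots. The point to handle carefully is that the passage to $\gamma$ genuinely lands in the affine subsystem $\Delta_{\hat{\mathfrak g}_1}$, where positivity and negativity of roots are honest --- precisely the place where the double-affine pathology (roots need not be sign-definite in the full ``simple'' system $\alpha_{-1},\dots,\alpha_s$) is avoided, since all $\alpha_{-1}$-dependence has been absorbed into the $\delta_2$-part. The reduction to the zero set of $f_{\mathcal A}$ and the forward inclusion are routine.
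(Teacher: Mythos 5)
Your proof is correct. Note that the paper offers no argument at all for this proposition---it is introduced with ``The following result is clear,'' the intended justification evidently being set algebra applied to the displayed union description of $Q(\mathcal{A})$ (intersect it with its negative and use $\mathcal{I}\cap(-\mathcal{I})=\emptyset$). You take a genuinely different and more self-contained route: you never invoke that union description, but instead reduce $Q(\mathcal{A})\cap(-Q(\mathcal{A}))$ to the zero set of $f_{\mathcal{A}}$ on $\Delta_{\mathfrak{T}}$ and identify that zero set via the decomposition $\beta=\gamma+n\delta_2$ with $\gamma\in\Delta_{\hat{\mathfrak{g}}_1}\cup\{0\}$, plus sign-coherence of affine roots in the basis $\alpha_0,\dots,\alpha_s$. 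What this buys is worth noting: the sign-coherence step is precisely the unstated fact underlying the paper's union description of $Q(\mathcal{A})$ itself (which the paper also does not prove), so your argument verifies the proposition from the definition alone rather than from an unproved intermediate claim. Two minor points: uniqueness of the decomposition $\beta=\gamma+n\delta_2$ is not actually needed (existence suffices), though your justification of it is valid, since $n$ is the $\alpha_{-1}$-coefficient of $\beta$ in the basis $\alpha_{-1},\alpha_0,\dots,\alpha_s$; and your argument silently covers the degenerate case $\mathcal{A}=\mathcal{B}$, where $f_{\mathcal{B}}=0$ by fiat, because the sum $\sum_{j\in\mathcal{B}\setminus\mathcal{A}}d_j$ is then empty and imposes no constraint, as it should.
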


Recall that the Cartan matrix is given by $\langle \alpha_j^{\vee}, \alpha_i\rangle=a_{ij}$  for
$i=-1,0,1,\ldots,s$, $j=0,1,\ldots,s$. Let
$\hat{\mathfrak h}_{\mathcal {A}}\subset\hat{\mathfrak h}$ be the space spanned by
$\alpha_i^{\vee}$~( $i\in \mathcal {A}$). Consider the subspaces
$\mathfrak T_{Q(\mathcal {A})}$ and
$\mathfrak T_{-\overline{Q(\mathcal {A})}}$, where
$\overline{Q(\mathcal {A})}=Q(\mathcal {A})\backslash (-Q(\mathcal
{A}))$. Then $\mathfrak T$ decomposes itself as:
$$\mathfrak T=\mathfrak T_{-\overline{Q(\mathcal {A})}}\oplus\hat{\mathfrak h}\oplus \mathfrak T_{Q(\mathcal
{A})}.$$

Let $\lambda\in\hat{\mathfrak h}^{\ast}$ such that $\la(\hat{\mathfrak h}_{\mathcal
{A}}\oplus\mathbb{C}c_2)=0$. Let $\mathbb C1_{\lambda}$ be the one-dimensional
$\hat{\mathfrak h}\oplus \mathfrak T_{Q(\mathcal {A})}-$module such that
$(x+h).1_{\lambda}=\lambda(h)\cdot 1_{\lambda}~(x\in
\mathfrak T_{Q(\mathcal {A})}$, $h\in\hat{\mathfrak h})$. Define the
induced $\mathfrak T$-module associated with $\mathcal {I}$, $\mathcal
{A}$ and $\lambda$:
$$M(\lambda,\mathcal {A})=U(\mathfrak T)\otimes_{U(\hat{\mathfrak h}\oplus \mathfrak T_{Q(\mathcal {A})}
)}\mathbb C1_{\lambda}.$$

\subsection{Properties of $M(\lambda,\mathcal {A})$}
The following result is similar to Proposition \ref{P:imv}.
\begin{proposition}\label{3} For any $\lambda\in\hat{\mathfrak h}^{\ast}$ such that $\la(\hat{\mathfrak h}_{\mathcal
{A}}\oplus\mathbb{C}c_2)=0$, one has that
\begin{enumerate}

\item  $M(\lambda,\mathcal {A})$ is a
$U(\mathfrak T_{-\overline{Q(\mathcal {A})}})-$free module
generated by $1\otimes 1_{\lambda}$.

\item $\dim M(\lambda,\mathcal {A})_{\mu}=0, 1$ 
for $\mu=\lambda-\sum\limits_{i\in \mathcal {A}}
 k_i\alpha_i-\alpha_j+k\delta_2$, $k_i\in\mathbb{Z}_+$, $k\in\mathbb{Z}$, and $j\in \mathcal
{B}\backslash \mathcal {A}$. Otherwise,
$\dim{M}(\lambda,\mathcal {A})_{\mu}=\infty$.

\item The $\mathfrak T-$module $M(\lambda,\mathcal {A})$ has a
unique irreducible quotient $L(\lambda,\mathcal {A})$.

\item Let
$\mathcal A''\subset \mathcal {A}$.
Then there exists a chain of
surjective homomorphisms
$$\overline{M}(\lambda)\rightarrow M(\lambda,\mathcal A'')\rightarrow M(\lambda,\mathcal {A}).$$

\item Let $\lambda,\mu\in\hat{\mathfrak h}^{\ast}$. Then every nonzero map in
$\text{Hom}_{\mathfrak T}(M(\lambda,\mathcal
{A}),M(\mu,\mathcal {A}))$ is injective.

\item Let
$\mathcal {A}\subset
\mathcal {B}$.

Then the module $M(\lambda,\mathcal {A})$ is irreducible if and only
if $\lambda(\alpha_i^{\vee})\neq 0$ for any $i\in \mathcal
A'\setminus \mathcal {A}$, $\mathcal {A}\varsubsetneqq \mathcal A'$,
i.e. $\mathcal A$ is the maximal set such that
$\la(\alpha_i^{\vee})=0$.
\end{enumerate}
\end{proposition}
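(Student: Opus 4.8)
The plan is to prove the two implications by different means: the reducibility (``only if'') direction follows formally from the surjections of part (4), while the irreducibility (``if'') direction is established by a grading argument anchored at the imaginary highest weight vector, in the spirit of the proof of Theorem~\ref{T:crierion}.

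For the reducibility direction, suppose $\mathcal{A}$ is not maximal, so that $\lambda(\alpha_{i_0}^{\vee})=0$ for some $i_0\in\mathcal{B}\setminus\mathcal{A}$. Then $\lambda$ still annihilates $\hat{\mathfrak h}_{\mathcal{A}_1}\oplus\mathbb{C}c_2$ for $\mathcal{A}_1=\mathcal{A}\cup\{i_0\}$, so $M(\lambda,\mathcal{A}_1)$ is defined and part (4), applied to $\mathcal{A}\subset\mathcal{A}_1$, gives a surjection $\pi\colon M(\lambda,\mathcal{A})\twoheadrightarrow M(\lambda,\mathcal{A}_1)$. Since $f_{\mathcal{A}_1}(\alpha_{i_0})=0$, we have $-\alpha_{i_0}\in Q(\mathcal{A}_1)$, so $f_{i_0}$ kills the highest weight vector of $M(\lambda,\mathcal{A}_1)$; on the other hand $-\alpha_{i_0}\in-\overline{Q(\mathcal{A})}$, so by the freeness in part (1) the vector $f_{i_0}\cdot(1\otimes1_\lambda)$ is nonzero in $M(\lambda,\mathcal{A})$. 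Hence $f_{i_0}\cdot(1\otimes1_\lambda)\in\ker\pi$ is a nonzero element, and $M(\lambda,\mathcal{A})$ is reducible. I expect this direction to be routine.

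For the irreducibility direction, assume $\lambda(\alpha_i^{\vee})\neq0$ for every $i\in\mathcal{B}\setminus\mathcal{A}$. Grade $M(\lambda,\mathcal{A})$ by $d(v)=f_{\mathcal{A}}(\lambda)-f_{\mathcal{A}}(\mathrm{wt}\,v)$. Every root of $\overline{Q(\mathcal{A})}$ has $f_{\mathcal{A}}>0$, whereas the whole Levi part $\mathfrak{T}_{Q(\mathcal{A})\cap(-Q(\mathcal{A}))}$ --- in particular the $\delta_2$-Heisenberg and $c_2$ --- lies in the Borel and annihilates $v_\lambda:=1\otimes1_\lambda$. Thus the free lowering operators strictly raise $d$, the Borel preserves or lowers it, and the top component is exactly $\mathbb{C}v_\lambda$. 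Consequently $M(\lambda,\mathcal{A})$ is irreducible once we show $v_\lambda\in U(\mathfrak{T})v$ for each nonzero $v$, for which it suffices to move any nonzero $v$ with $d(v)\geq1$ to a nonzero vector of strictly smaller degree using a raising operator from $\mathfrak{T}_{\overline{Q(\mathcal{A})}}$.

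The engine of the degree reduction is a nondegenerate pairing that is available exactly under the maximality hypothesis. For $i_0\in\mathcal{B}\setminus\mathcal{A}$ (say $i_0\geq1$) one computes $[\,e_{i_0}\otimes t_2^{-m},\,f_{i_0}\otimes t_2^{m}\,]=\alpha_{i_0}^{\vee}-m(e_{i_0}|f_{i_0})c_2$, and the analogous dual pair for $i_0=0$ behaves the same way; in either case it acts on $v_\lambda$ as the scalar $\lambda(\alpha_{i_0}^{\vee})$ because $\lambda(c_2)=0$, which is nonzero precisely by maximality. In the length-one case this already yields $e_{i_0}\otimes t_2^{-m}\cdot(f_{i_0}\otimes t_2^{m})v_\lambda=\lambda(\alpha_{i_0}^{\vee})v_\lambda$. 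In general I would fix a PBW-maximal monomial $y_{\beta_1}\cdots y_{\beta_d}v_\lambda$ occurring in $v$ (the $y_\beta$ being root vectors for $-\beta$, $\beta\in\overline{Q(\mathcal{A})}$) and apply the raising operator $x_{\beta_d}$ dual to $y_{\beta_d}$, whose bracket with $y_{\beta_d}$ reproduces the nonzero scalar above and strips off the last factor. This is where the main obstacle lies: commuting $x_{\beta_d}$ through the monomial produces cross terms $[x_{\beta_d},y_{\beta_j}]$ ($j<d$) together with central and Heisenberg contributions, all of which land in the degree-preserving Levi part, so one must choose the ordering of the PBW basis and track the leading term to guarantee that the extracted degree-$(d-1)$ vector does not cancel. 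Equivalently, the level-zero Heisenberg sitting inside the Levi is what obstructs a naive ``no nontrivial extremal vectors'' argument, and the nonvanishing of $\lambda$ on every external coroot is exactly what restores the nondegeneracy needed to push through the reduction; this parallels Theorem~\ref{T:crierion}, where the same role was played by $\lambda(c_2)\neq0$ via the Stone--von Neumann theorem.
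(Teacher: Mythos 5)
Your ``only if'' direction is correct and complete: enlarging $\mathcal{A}$ to $\mathcal{A}_1=\mathcal{A}\cup\{i_0\}$, invoking the surjection of part (4) and the freeness of part (1) to exhibit $f_{i_0}\cdot(1\otimes 1_\lambda)$ as a nonzero element of a proper kernel is exactly the right formal argument. For calibration: the paper gives no proof of this proposition at all (it only says the result ``is similar to Proposition \ref{P:imv}'', which itself is asserted by analogy with \cite{VG,F}), so there is no paper route to compare against and your argument must stand on its own. Your reduction of the ``if'' direction via the $f_{\mathcal{A}}$-grading is also the right skeleton: degrees are non-negative integers, the degree-zero piece is $\mathbb{C}v_\lambda$, and a minimal-degree weight vector in a nonzero submodule is either a multiple of $v_\lambda$ or is killed by all of $\mathfrak{T}_{\overline{Q(\mathcal{A})}}$. (You only address item (6), using (1) and (4) as inputs; that is legitimate since (1)--(5) are routine PBW and universal-property facts, but a full write-up would record them.)

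The genuine gap is the degree-reduction step, and it is not a bookkeeping issue that a good PBW ordering would repair: the proposed engine fails on the first nontrivial example. Take $\mathfrak{g}=\mathfrak{sl}_2$, $\mathcal{A}=\emptyset$, and the single monomial $v=(f\otimes t_2^{p})(f\otimes t_2^{q})v_\lambda$ with $p\neq q$ (note the weight space of $\lambda-2\alpha_1+(p+q)\delta_2$ consists exactly of such combinations). Using $\lambda(c_2)=0$ and the fact that $\mathfrak{h}\otimes t_2^{r}$, $r\neq0$, kills $v_\lambda$, one computes
\begin{equation*}
(e\otimes t_2^{m})\,v \;=\; \bigl[-2+\lambda(\alpha_1^{\vee})\left(\delta_{m,-p}+\delta_{m,-q}\right)\bigr]\,(f\otimes t_2^{m+p+q})v_\lambda ,
\end{equation*}
where the $-2$ arises from the Levi element $h\otimes t_2^{m+p}$ commuting past the second factor. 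The ``dual'' choice $m=-p$ therefore produces the coefficient $\lambda(\alpha_1^{\vee})-2$, which vanishes for the perfectly admissible maximal weight $\lambda(\alpha_1^{\vee})=2$: the dual raising operator annihilates the monomial outright, and no choice of ordering or leading-term tracking can help, since there is only one monomial. What actually works is closer to the opposite of your mechanism: for $m\notin\{-p,-q\}$ the Levi cross-term $-2$ alone survives, and for a general vector $v=\sum c_{\{p,q\}}(f\otimes t_2^{p})(f\otimes t_2^{q})v_\lambda$ one must let $m$ range over all of $\mathbb{Z}$ and exploit finite support: if $(e\otimes t_2^{m})v=0$ for all $m$, then $\lambda(\alpha_1^{\vee})\,c_{\{-m,\,p+q+m\}}$ is independent of $m$, forcing all coefficients to vanish. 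So the correct engine is a finite-support (Vandermonde-type) argument over the whole infinite family of raising operators $x(\alpha_i,n)$, $n\in\mathbb{Z}$, not a single nondegenerate pairing; the step you yourself flag as ``the main obstacle'' is precisely the unproved core of the theorem, and the specific mechanism you propose for it is false as stated.
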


\begin{remark}
If $\mathcal {A}=\mathcal {B}$, then $M(\lambda,\mathcal
{A})=L(\lambda,\mathcal {A})$ is a trivial one-dimensional module.
\end{remark}

\begin{corollary}
Let $\lambda\in\hat{\mathfrak h}^{\ast}$, $\mathcal {A}\subset \mathcal
{B}$, and  $\hat{\mathfrak h}_{\mathcal {A}}\oplus\mathbb{C}c_2\subset
ker\lambda$. Also assume that $\lambda(\alpha_i^{\vee})\neq 0$ for
any $i\in \mathcal {A}^{'}\setminus \mathcal {A}$, $\mathcal
{A}\varsubsetneqq \mathcal {A}^{'}$. Then
$\overline{L}(\lambda)\cong M(\lambda,\mathcal
{A})=L(\lambda,\mathcal {A})\cong
 L(\lambda,\mathcal {A}^{''})$ for every $\mathcal {A}^{''}\subset\mathcal {A}$.
\end{corollary}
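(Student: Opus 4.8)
The plan is to obtain the whole chain of isomorphisms as a formal consequence of the structural results already established, namely Proposition \ref{3} for the generalized modules $M(\lambda,\mathcal{A})$ and Proposition \ref{P:imv} for $\overline{M}(\lambda)$; no new computation is needed, only a careful comparison of irreducible quotients. First I would settle the equality $M(\lambda,\mathcal{A})=L(\lambda,\mathcal{A})$. The hypothesis that $\lambda(\alpha_i^{\vee})\neq 0$ for every $i\in\mathcal{A}'\setminus\mathcal{A}$ whenever $\mathcal{A}\varsubsetneqq\mathcal{A}'$ says precisely that $\mathcal{A}$ is the maximal subset of $\mathcal{B}$ on which $\lambda$ vanishes through the coroots. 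This is exactly the irreducibility criterion of Proposition \ref{3}(6), so $M(\lambda,\mathcal{A})$ is irreducible and therefore coincides with its unique irreducible quotient $L(\lambda,\mathcal{A})$ furnished by Proposition \ref{3}(3).

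Next I would establish $L(\lambda,\mathcal{A})\cong L(\lambda,\mathcal{A}'')$ for every $\mathcal{A}''\subset\mathcal{A}$. By Proposition \ref{3}(4) there is a surjective homomorphism $M(\lambda,\mathcal{A}'')\twoheadrightarrow M(\lambda,\mathcal{A})$. Since $M(\lambda,\mathcal{A})=L(\lambda,\mathcal{A})$ is irreducible by the previous step, it is an irreducible quotient of $M(\lambda,\mathcal{A}'')$, so its kernel is a maximal submodule. Because $M(\lambda,\mathcal{A}'')$ admits a \emph{unique} irreducible quotient $L(\lambda,\mathcal{A}'')$ by Proposition \ref{3}(3), this forces $L(\lambda,\mathcal{A}'')\cong M(\lambda,\mathcal{A})=L(\lambda,\mathcal{A})$, as required.

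Finally I would identify $\overline{L}(\lambda)$ with $M(\lambda,\mathcal{A})$ by running the same uniqueness argument one level higher. Applying Proposition \ref{3}(4) with $\mathcal{A}''=\emptyset\subset\mathcal{A}$ (legitimate since $\hat{\mathfrak h}_{\mathcal{A}}\oplus\mathbb{C}c_2\subset\ker\lambda$ forces $\lambda(c_2)=0$, so all the modules in sight are well defined) yields a chain of surjections $\overline{M}(\lambda)\twoheadrightarrow M(\lambda,\emptyset)\twoheadrightarrow M(\lambda,\mathcal{A})=L(\lambda,\mathcal{A})$. Composing produces a nonzero homomorphism from $\overline{M}(\lambda)$ onto the irreducible module $L(\lambda,\mathcal{A})$, whose kernel is a maximal submodule of $\overline{M}(\lambda)$; by the uniqueness of the maximal submodule $\mathcal{J}$ in Proposition \ref{P:imv}(4) this kernel equals $\mathcal{J}$. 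Hence $\overline{L}(\lambda)=\overline{M}(\lambda)/\mathcal{J}\cong L(\lambda,\mathcal{A})=M(\lambda,\mathcal{A})$, which closes the chain.

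Since each link rests on a uniqueness-of-irreducible-quotient argument rather than on any delicate estimate, the proof is essentially bookkeeping; the only point demanding genuine care is to verify that every surjection in Proposition \ref{3}(4) carries the generating imaginary highest vector to the generating highest vector of the target. This is what guarantees that the composite maps are onto the \emph{entire} irreducible module and that their kernels are maximal, so that the uniqueness statements of Propositions \ref{P:imv}(4) and \ref{3}(3) can be invoked to conclude the isomorphisms rather than merely the existence of a nonzero map.
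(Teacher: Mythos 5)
Your proof is correct and takes essentially the same route as the paper: the paper's (two-sentence) proof likewise derives $M(\lambda,\mathcal{A})=L(\lambda,\mathcal{A})$ from item (6) of Proposition \ref{3} and obtains both $\overline{L}(\lambda)\cong L(\lambda,\mathcal{A})$ and $L(\lambda,\mathcal{A}'')\cong L(\lambda,\mathcal{A})$ from the surjections in item (4). Your write-up merely makes explicit the uniqueness-of-maximal-submodule bookkeeping (Proposition \ref{P:imv}(4) and Proposition \ref{3}(3)) that the paper leaves implicit.
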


\begin{proof} Item (6) in Proposition \ref{3} implies $ M(\lambda,\mathcal
{A})=L(\lambda,\mathcal {A})$. Meanwhile, item (4) in Proposition
\ref{3} implies $L(\lambda,\mathcal {A})\cong \overline{L}(\lambda)$
and $L(\lambda,\mathcal {A}^{''})\cong L(\lambda,\mathcal {A})$ for
every $\mathcal {A}^{''}\subset\mathcal {A}$. This completes the
proof.
\end{proof}

\begin{corollary}\label{4}
Let $\lambda\in\hat{\mathfrak h}^{\ast}$ and $\lambda(c_2)=0$. If
$\lambda(\alpha_i^{\vee})\neq 0$ for any $i\in \mathcal {B}$, then
$\overline{M}(\lambda)^{+}=\overline{M}$.
\end{corollary}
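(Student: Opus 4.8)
The plan is to prove the two inclusions separately. One direction, $\overline{M}(\lambda)^{+}\subseteq\overline{M}$, is exactly Corollary \ref{subset} and holds for every $\lambda$. The substance of the statement is therefore the reverse inclusion $\overline{M}\subseteq\overline{M}(\lambda)^{+}$: I must show that every vector of weight $\lambda-j\delta_2$ $(j\geq 0)$ is annihilated by the whole imaginary Borel part $\mathfrak T_{\mathcal I}$.

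First I would pin down the structure of $\overline{M}$. In the PBW basis of $\overline{M}(\lambda)=U(\mathfrak T_{-\mathcal I})v_{\lambda}$, a monomial lands in weight $\lambda-j\delta_2$ only if the roots of its factors sum to $-j\delta_2$. Each factor has root in $-\mathcal I$, and since every root of $\mathcal I$ has $\varphi\geq 0$ with $\varphi=0$ precisely on $\mathbb N\delta_2$, every root of $-\mathcal I$ has $\varphi\leq 0$ with equality only on $-\mathbb N\delta_2$; as the total $\varphi$-value must equal $\varphi(-j\delta_2)=0$, each factor is forced to lie in $\mathfrak T_{-l\delta_2}=\mathfrak h\otimes t_2^{-l}$ with $l>0$. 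Hence $\overline{M}$ is generated from $v_{\lambda}$ by the negative modes of the Heisenberg subalgebra $\hat{\mathfrak H}_2$.

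Next I would split $\mathcal I$ according to the value of $\varphi$ and fix $v\in\overline{M}$ of weight $\lambda-j\delta_2$. For $\beta\in\mathcal I$ with $\varphi(\beta)>0$, the vector $\mathfrak T_{\beta}v$ would have weight $\lambda-j\delta_2+\beta$, whose $\varphi$-value is $\varphi(\lambda)+\varphi(\beta)>\varphi(\lambda)$; since every weight $\mu$ of $\overline{M}(\lambda)$ satisfies $\varphi(\mu)\leq\varphi(\lambda)$, that weight space is zero and so $\mathfrak T_{\beta}v=0$. The only remaining roots of $\mathcal I$ are $\beta=l\delta_2$ with $l>0$, i.e.\ the positive modes $\mathfrak h\otimes t_2^{l}$ of $\hat{\mathfrak H}_2$. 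Here I would invoke $\lambda(c_2)=0$: commuting $h\otimes t_2^{l}$ past the negative generating modes produces only multiples of the central element $c_2$, which act by $\lambda(c_2)=0$, while $h\otimes t_2^{l}$ itself annihilates $v_{\lambda}$ because $l\delta_2\in\mathcal I$. Thus $\mathfrak T_{l\delta_2}v=0$ as well, and combining the two cases gives $\mathfrak T_{\mathcal I}v=0$, i.e.\ $v\in\overline{M}(\lambda)^{+}$. With Corollary \ref{subset} this yields $\overline{M}(\lambda)^{+}=\overline{M}$.

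The main obstacle is the $\mathbb N\delta_2$ case: one has to verify that the positive Heisenberg modes annihilate all of $\overline{M}$ and not merely its highest weight vector, which is precisely where the degeneration $\lambda(c_2)=0$ --- the collapse of $\hat{\mathfrak H}_2$ to an abelian algebra, compare Lemma \ref{2} --- is indispensable. I remark that the standing hypothesis $\lambda(\alpha_i^{\vee})\neq 0$ for all $i\in\mathcal B$ is not used in the annihilation argument itself; it serves only to place $\overline{M}(\lambda)$ in the irreducible regime studied in Proposition \ref{3}, and thereby to identify $\overline{M}(\lambda)^{+}=\overline{M}$ with the extremal space of the corresponding irreducible generalized module.
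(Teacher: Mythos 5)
Your proof is correct, but it reaches the substantive inclusion $\overline{M}\subseteq\overline{M}(\lambda)^{+}$ by a genuinely different route than the paper. The paper also reduces to this inclusion via Corollary \ref{subset}, but then argues structurally: the hypothesis $\lambda(\alpha_i^{\vee})\neq 0$ for all $i\in\mathcal{B}$ is fed into Proposition \ref{3} (item (6), with $\mathcal{A}=\emptyset$) together with the preceding corollary to obtain $\overline{L}(\lambda)\cong M(\lambda,\emptyset)=L(\lambda,\emptyset)$, and the equality $\overline{M}(\lambda)^{+}=\overline{M}$ is then read off by comparing $\overline{M}(\lambda)/\mathcal{J}$ with the parabolic module $M(\lambda,\emptyset)$, whose defining relations kill the negative Heisenberg modes on the highest weight vector. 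You instead give a self-contained computation: the $\varphi$-grading shows that every PBW monomial of weight $\lambda-j\delta_2$ involves only the modes $\mathfrak{h}\otimes t_2^{-l}$, so $\overline{M}=U(\hat{\mathfrak H}_2^{-})v_{\lambda}$ (this matches the paper's own remark that $\overline{M}$ is a Verma module for $\hat{\mathfrak H}_2$); the roots $\beta\in\mathcal{I}$ with $\varphi(\beta)>0$ annihilate $\overline{M}$ for weight reasons (the same comparison the paper uses inside the proof of Theorem \ref{T:crierion}); and the positive modes $\mathfrak{h}\otimes t_2^{l}$ annihilate $\overline{M}$ because at central charge $\lambda(c_2)=0$ all commutators $[\mathfrak{h}\otimes t_2^{l},\mathfrak{h}\otimes t_2^{-n}]$ are multiples of $c_2$ and hence act as zero, while $\mathfrak{h}\otimes t_2^{l}$ kills $v_{\lambda}$. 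Both arguments are sound; what yours buys is the sharper conclusion that $\lambda(c_2)=0$ alone forces $\overline{M}(\lambda)^{+}=\overline{M}$ --- your closing observation that the hypothesis $\lambda(\alpha_i^{\vee})\neq 0$ is never used in the annihilation step is accurate --- whereas the paper's route, at the cost of that extra hypothesis, establishes along the way the identification $\overline{L}(\lambda)\cong L(\lambda,\emptyset)$, which is precisely what the subsequent theorem on the chain of submodules $\overline{M}(\lambda)\supset\overline{M}(\lambda-\delta_2)\supset\cdots$ exploits.
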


\begin{proof}
By Corollary \ref{subset}, it suffices to
show $\overline{M}\subset\overline{M}(\lambda)^{+}$. If
$\lambda(\alpha_i^{\vee})\neq 0$ for any $i\in \mathcal {B}$, then
$\overline{L}(\lambda)\cong
M(\lambda,\emptyset)=L(\lambda,\emptyset)$. The result follows by
comparing the definition of $\overline{L}(\lambda)$ and that of
$M(\lambda,\emptyset)$. \end{proof}

The following result is a consequence of Corollary \ref{4}.
\begin{theorem}
Suppose that $\lambda\in\hat{\mathfrak h}^{\ast}$, $\lambda(c_2)=0$ and
$\lambda(\alpha_i^{\vee})\neq 0$ for any $i\in \mathcal {B}$. Then one has that
\begin{enumerate}
\item $\overline{M}(\lambda)$ has infinitely many proper submodules:
\begin{equation*}
\overline{M}(\lambda)\supset\overline{M}(\lambda-\delta_2)\supset\overline{M}(\lambda-2\delta_2)\supset\cdots
\end{equation*}
where $\dim\overline{M}(\lambda-k\delta_2)_{\lambda-k\delta_2}=
\dim \overline{M}(\lambda)_{\lambda-k\delta_2}=m_k$
are finite. Moreover
$\overline{L}(\lambda)=\overline{M}(\lambda)/\overline{M}(\lambda-\delta_2)$.

\item The root multiplicities $\dim
\overline{M}(\la)_{\lambda-k\delta_2}=m_k$ for all
$k\geq 0$, and $M(\lambda-k\delta_2,\emptyset)$ exhaust all
irreducible subquotients of $\overline{M}(\lambda)$.

\item $\dim\text{Hom}_{\mathfrak T}(\overline{M}(\lambda-k\delta_2),\overline{M}(\lambda))=m_k$
for any integer $k\geq 0$.
\end{enumerate}
\end{theorem}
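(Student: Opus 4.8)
The plan is to deduce all three parts from Corollary~\ref{4}, which under the present hypotheses asserts that every extremal vector lies on the diagonal $\overline{M}=\bigoplus_{j\geq 0}\overline{M}(\lambda)_{\lambda-j\delta_2}$ and, conversely, that \emph{every} vector of $\overline{M}$ is extremal. I would first observe that each shifted weight $\lambda-k\delta_2$ satisfies the same hypotheses: since $\delta_2$ vanishes on $\mathfrak h$, on the $c_j$, and hence on every $\alpha_i^{\vee}$ (recall $\alpha_0^{\vee}=c_1-\theta^{\vee}\otimes 1$ and $\alpha_i^{\vee}\in\mathfrak h$ for $i\geq 1$), we get $(\lambda-k\delta_2)(c_2)=0$ and $(\lambda-k\delta_2)(\alpha_i^{\vee})=\lambda(\alpha_i^{\vee})\neq 0$ for all $i\in\mathcal B$. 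Thus Corollary~\ref{4}, together with the isomorphism $\overline{L}(\mu)\cong M(\mu,\emptyset)=L(\mu,\emptyset)$ proved just before it (Proposition~\ref{3}(6) applies since $\emptyset$ is maximal with $\mu(\alpha_i^{\vee})=0$), is available for every $\mu=\lambda-k\delta_2$.

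For part~(1) I would set $W_k=\bigoplus_{j\geq k}\overline{M}(\lambda)_{\lambda-j\delta_2}$ and define the submodule $\overline{M}(\lambda-k\delta_2):=U(\mathfrak T)W_k$. As every vector of $W_k$ is extremal, $U(\mathfrak T)W_k=U(\mathfrak T_{-\mathcal I})W_k$, so the descending chain is immediate from $W_0\supset W_1\supset\cdots$. The decisive point is that the weight-$(\lambda-k\delta_2)$ subspace of $U(\mathfrak T_{-\mathcal I})W_k$ equals $\overline{M}(\lambda)_{\lambda-k\delta_2}$, of finite dimension $m_k$ (finiteness by Proposition~\ref{P:imv}(2)): if a product of root vectors attached to $\beta_1,\dots,\beta_p\in-\mathcal I$ carries a vector of weight $\lambda-j\delta_2$ ($j\geq k$) into weight $\lambda-k\delta_2$, then $\sum_l\beta_l=(j-k)\delta_2$; but the $\hat{\mathfrak g}_1$-part of each $\beta\in-\mathcal I$ is either $0$ (when $\beta\in-\mathbb N\delta_2$) or $-\gamma$ with $\gamma\in\Delta_{\hat{\mathfrak g}_1+}$, and a nonempty sum of positive $\hat{\mathfrak g}_1$-roots cannot vanish, so only $-\mathbb N\delta_2$-factors can occur; these strictly lower $j$, forcing $p=0$ and $j=k$. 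Hence the inclusions are proper with the stated top dimensions. To finish, I would identify $\overline{M}(\lambda-\delta_2)$ with the kernel of the canonical surjection $\overline{M}(\lambda)\twoheadrightarrow M(\lambda,\emptyset)$ of Proposition~\ref{3}(4): the relations $(\mathfrak h\otimes t_2^{-n})\bar v_\lambda=0$ $(n\geq 1)$ holding in $\overline{M}(\lambda)/\overline{M}(\lambda-\delta_2)$ are exactly the defining relations of $M(\lambda,\emptyset)$, so the quotient is $M(\lambda,\emptyset)=L(\lambda,\emptyset)\cong\overline{L}(\lambda)$.

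For part~(2) I would iterate. Applying the same weight count at $\lambda-k\delta_2$ shows that the successive quotient $Q_k:=\overline{M}(\lambda-k\delta_2)/\overline{M}(\lambda-(k+1)\delta_2)$ is generated by its top weight space $\overline{M}(\lambda)_{\lambda-k\delta_2}$ (dimension $m_k$, all extremal), while every diagonal weight space $(Q_k)_{\lambda-j\delta_2}$ with $j>k$ vanishes. Hence each top generator spans the whole diagonal of the submodule it generates, so that submodule is a quotient of the imaginary Verma module $\overline{M}(\lambda-k\delta_2)$ with one-dimensional diagonal, and the defining map factors through the irreducible quotient $\overline{L}(\lambda-k\delta_2)=M(\lambda-k\delta_2,\emptyset)$; being a sum of irreducibles, $Q_k\cong M(\lambda-k\delta_2,\emptyset)^{\oplus m_k}$. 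Since any fixed weight space of $\overline{M}(\lambda)$ leaves $\overline{M}(\lambda-k\delta_2)$ once $k$ exceeds the $\delta_2$-content of the weight, $\bigcap_k\overline{M}(\lambda-k\delta_2)=0$; hence the $M(\lambda-k\delta_2,\emptyset)$, each with multiplicity $m_k$, exhaust the irreducible subquotients and $\dim\overline{M}(\lambda)_{\lambda-k\delta_2}=m_k$.

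Part~(3) is then formal: by the universal property of the induced module, a $\mathfrak T$-map from the imaginary Verma module $\overline{M}(\lambda-k\delta_2)$ into $\overline{M}(\lambda)$ is the same as the choice of an extremal vector of weight $\lambda-k\delta_2$ in $\overline{M}(\lambda)$, so $\mathrm{Hom}_{\mathfrak T}(\overline{M}(\lambda-k\delta_2),\overline{M}(\lambda))\cong\overline{M}(\lambda)^{+}_{\lambda-k\delta_2}=\overline{M}(\lambda)_{\lambda-k\delta_2}$ by Corollary~\ref{4}, of dimension $m_k$. I expect the main obstacle to be the grading computation in part~(1)---verifying that the tail $W_{k+1}$ contributes nothing in weight $\lambda-k\delta_2$, so that $\overline{M}(\lambda-k\delta_2)$ genuinely has top multiplicity $m_k$---together with the bookkeeping in part~(2) that pins each graded quotient down to the isotypic module $M(\lambda-k\delta_2,\emptyset)^{\oplus m_k}$.
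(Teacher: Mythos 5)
The paper offers no proof of this theorem beyond the sentence that it ``is a consequence of Corollary~\ref{4}'', so there is no line-by-line comparison to make; your proposal is a genuine attempt to supply the missing argument, and most of it succeeds. Your part~(1) is correct: the weight count showing $\bigl(U(\mathfrak T)W_k\bigr)_{\lambda-k\delta_2}=\overline{M}(\lambda)_{\lambda-k\delta_2}$ is sound, and the identification $\overline{M}(\lambda)/U(\mathfrak T)W_1\cong M(\lambda,\emptyset)=L(\lambda,\emptyset)\cong\overline{L}(\lambda)$ (via Proposition~\ref{3}(4),(6) and uniqueness of the maximal submodule) is the right way to get the ``Moreover''. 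Part~(3) via the universal property together with Corollary~\ref{4} is also correct. One caveat you should make explicit: you use the symbol $\overline{M}(\lambda-k\delta_2)$ in two incompatible senses --- in (1)--(2) it is $U(\mathfrak T)W_k$, whose top weight space has dimension $m_k$, while in (3) it is the imaginary Verma module, whose top weight space is one-dimensional by Proposition~\ref{P:imv}(2); these modules are not isomorphic once $m_k>1$. This ambiguity is inherited from the theorem's own statement, but a careful write-up must separate the two objects.

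The genuine gaps are both in part~(2). First, your justification of $\bigcap_k U(\mathfrak T)W_k=0$ is false: a fixed weight space does \emph{not} leave $U(\mathfrak T)W_k$ once $k$ exceeds its $\delta_2$-content. For example, the weight $\lambda-\alpha_1$ has $\delta_2$-content $0$, yet the root vector $x_{-\alpha_1}\otimes t_2^{\,j}$ lies in $\mathfrak T_{-\mathcal I}$ (its root is $-\alpha_1+j\delta_2$) and carries level-$j$ diagonal vectors to nonzero vectors of weight $\lambda-\alpha_1$; hence $\bigl(U(\mathfrak T)W_k\bigr)_{\lambda-\alpha_1}\neq 0$ for \emph{every} $k$. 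The repair is easy but different: your own weight count gives $U(\mathfrak T)W_k\cap\overline{M}=W_k$, so $\bigl(\bigcap_k U(\mathfrak T)W_k\bigr)\cap\overline{M}\subseteq\bigcap_k W_k=0$, and then Lemma~\ref{1} (every nonzero submodule meets $\overline{M}$) forces $\bigcap_k U(\mathfrak T)W_k=0$.

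Second, and more seriously, even with $\bigcap_k U(\mathfrak T)W_k=0$ and semisimple layers, exhaustion of the irreducible subquotients does not follow formally, because the filtration is infinite. Given submodules $V_2\subset V_1$ with $V_1/V_2$ irreducible, your argument needs a \emph{largest} $k$ with $V_1\cap F_k\not\subseteq V_2$ (where $F_k=U(\mathfrak T)W_k$); then $V_1/V_2$ is a subquotient of the semisimple layer $F_k/F_{k+1}$ and you are done. Nothing you wrote excludes the alternative that $V_1\cap F_k\not\subseteq V_2$ for all $k$, which is equivalent to $V_1/V_2$ having no weight on the diagonal $\lambda-\mathbb{Z}_{\geq 0}\delta_2$ (its weights would then all have nonzero real part). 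Ruling this out is a real piece of mathematics and is exactly where the hypothesis $\lambda(\alpha_i^{\vee})\neq 0$ must be used a second time: for instance, for $u=(f_i\otimes t_2^{n})v_{\lambda}$ one has $(e_i\otimes t_2^{-n})u=\lambda(\alpha_i^{\vee})v_{\lambda}\neq 0$, so $u\in U(\mathfrak T)\bigl(\mathfrak T_{\mathcal I}u\bigr)$ and $u$ cannot survive into a subquotient that kills all its raisings; one needs a version of this ``climb back up'' argument (an induction strengthening Lemma~\ref{1}) for arbitrary off-diagonal vectors, or equivalently a proof that every submodule of $\overline{M}(\lambda)$ is generated by its intersection with $\overline{M}$. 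Without such an argument, part~(2) --- and in particular the word ``exhaust'' --- is not proved; note that this gap is also silently present in the paper, which asserts the theorem without proof.
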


One can describe general highest weight modules as follows.
\begin{corollary}
Let $V$ be a highest weight $\mathfrak T$-module of highest weight
$\lambda$. If $c_2$ acts trivially and $\lambda(\alpha_i^{\vee})\neq
0$ for $i=0, \cdots, s$, then $V\simeq
\overline{M}(\lambda)/\overline{M}(\lambda-k\delta_2)$ for some $k$.
\end{corollary}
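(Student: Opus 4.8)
The plan is to realize $V$ as a quotient of the imaginary Verma module and then to show that, under the present hypotheses, the only submodules of $\overline{M}(\lambda)$ are the members $\overline{M}(\lambda-k\delta_2)$ of the chain produced in the preceding theorem, together with $0$. First, since $V$ is an imaginary highest weight module of highest weight $\lambda$, Proposition~\ref{P:imv}(3) furnishes a surjection $\overline{M}(\lambda)\twoheadrightarrow V$, so that $V\cong\overline{M}(\lambda)/N$ for the submodule $N=\ker$. If $N=0$ then $V\cong\overline{M}(\lambda)$ (the degenerate case, which one may read as $k=\infty$); so assume $N\neq 0$. Because $\dim\overline{M}(\lambda)_\lambda=1$ and $V_\lambda\neq 0$ we have $N_\lambda=0$. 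By Lemma~\ref{1}, for any nonzero $v\in N$ we have $U(\mathfrak T)v\cap\overline{M}\neq 0$ with $U(\mathfrak T)v\subseteq N$, so the graded space $N\cap\overline{M}=\bigoplus_{j\geq 0}\bigl(N\cap\overline{M}(\lambda)_{\lambda-j\delta_2}\bigr)$ is nonzero; let $k\geq 1$ be the least index with $N\cap\overline{M}(\lambda)_{\lambda-k\delta_2}\neq 0$.

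The heart of the argument is the inclusion $\overline{M}(\lambda-k\delta_2)\subseteq N$. Pick $0\neq w\in N\cap\overline{M}(\lambda)_{\lambda-k\delta_2}$. By Corollary~\ref{4}, which applies precisely because $\lambda(c_2)=0$ and $\lambda(\alpha_i^{\vee})\neq 0$ for all $i\in\mathcal B$, every vector of $\overline{M}$ is an imaginary highest vector; hence $w$ is extremal and $U(\mathfrak T)w=U(\mathfrak T_{-\mathcal I})w$ is an imaginary highest weight submodule of weight $\lambda-k\delta_2$. I would then show that $U(\mathfrak T)w$ recaptures the whole weight space $\overline{M}(\lambda)_{\lambda-k\delta_2}$ and every lower string space $\overline{M}(\lambda)_{\lambda-j\delta_2}$ for $j\geq k$; this is exactly the generating property recorded in the preceding theorem via $\dim\overline{M}(\lambda-k\delta_2)_{\lambda-k\delta_2}=\dim\overline{M}(\lambda)_{\lambda-k\delta_2}$, and it forces $\overline{M}(\lambda-k\delta_2)\subseteq N$. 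The mechanism is the one already used in the proof of Lemma~\ref{1}: one twists the Chevalley generators $e_i,f_i$ of $\hat{\mathfrak g}_1$ by powers of $t_2$ and returns to the $\delta_2$-string through brackets of the shape $[e_i(0,a),f_i(0,-a)]=\alpha_i^{\vee}$, noting that $\alpha_i^{\vee}$ acts on every space $\overline{M}(\lambda)_{\lambda-j\delta_2}$ by the single scalar $\lambda(\alpha_i^{\vee})$ (since $\delta_2(\alpha_i^{\vee})=0$ for $i\in\mathcal B$). It is exactly the nonvanishing $\lambda(\alpha_i^{\vee})\neq 0$ that allows a single extremal vector $w$ to fill out the full weight spaces.

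For the reverse inclusion $N\subseteq\overline{M}(\lambda-k\delta_2)$, I pass to $Q=\overline{M}(\lambda)/\overline{M}(\lambda-k\delta_2)$. The filling-out of the previous step identifies $\overline{M}(\lambda-k\delta_2)\cap\overline{M}=\bigoplus_{j\geq k}\overline{M}(\lambda)_{\lambda-j\delta_2}$, so the $\delta_2$-string of $Q$ is the finite sum $\bigoplus_{0\leq j<k}Q_{\lambda-j\delta_2}$. Suppose $N\supsetneq\overline{M}(\lambda-k\delta_2)$; then $\overline N=N/\overline{M}(\lambda-k\delta_2)$ is a nonzero submodule of $Q$, and the argument of Lemma~\ref{1}, equally valid for the highest weight module $Q$, produces a nonzero element of $\overline N$ of some weight $\lambda-j\delta_2$. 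Lifting it to $N$ gives a nonzero vector of $N\cap\overline{M}(\lambda)_{\lambda-j\delta_2}$ outside $\overline{M}(\lambda-k\delta_2)$; but $j<k$ contradicts the minimality of $k$, while $j\geq k$ forces the vector into $\overline{M}(\lambda-k\delta_2)$. Either way we reach a contradiction, so $N=\overline{M}(\lambda-k\delta_2)$ and therefore $V\cong\overline{M}(\lambda)/\overline{M}(\lambda-k\delta_2)$.

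I expect the main obstacle to be the middle step, namely proving that one extremal vector of weight $\lambda-k\delta_2$ generates all of $\overline{M}(\lambda-k\delta_2)$, i.e. fills every relevant weight space. This is where the nondegeneracy $\lambda(\alpha_i^{\vee})\neq 0$ is indispensable: without it one would get proper submodules cutting across the weight spaces, and the submodule lattice would no longer reduce to the single chain. It is also the part most dependent on the explicit bracket computations already underpinning the preceding theorem and Corollary~\ref{4}; once that generation property is in hand, the two inclusions and the handling of lifts are routine weight bookkeeping.
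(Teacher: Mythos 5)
The paper states this corollary with no proof at all (it is offered as a direct consequence of the preceding theorem), so your argument has to stand entirely on its own — and it fails exactly at the step you yourself single out as the heart of the matter. The claim that a single extremal vector $w$ of weight $\lambda-k\delta_2$ generates the whole weight space $\overline{M}(\lambda)_{\lambda-k\delta_2}$ (and the lower string spaces) is false. Since $w$ is extremal, $U(\mathfrak T)w=U(\mathfrak T_{-\mathcal I})w$, and the zero-weight component of $U(\mathfrak T_{-\mathcal I})$ is just $\mathbb C\cdot 1$: no nonempty sum of roots from $-\mathcal I$ vanishes, because the real parts lie in $-\Delta_{\hat{\mathfrak g}_1+}$ and cannot cancel, while purely imaginary contributions sum to a strictly negative multiple of $\delta_2$. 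Hence $U(\mathfrak T)w\cap\overline{M}(\lambda)_{\lambda-k\delta_2}=\mathbb C w$, so one extremal vector can never fill out an $m_k$-dimensional weight space. Your proposed mechanism confirms this rather than circumventing it: since $e_i(0,a)w=0$ and $c_2$ acts as zero, one gets $e_i(0,a)f_i(0,-a)w=[e_i(0,a),f_i(0,-a)]w=\lambda(\alpha_i^{\vee})w$, i.e.\ only scalar multiples of $w$ are recovered; the hypothesis $\lambda(\alpha_i^{\vee})\neq 0$ gives no purchase here.

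The failure is structural, not a missing computation. When $\lambda(c_2)=0$ the raising half of $\hat{\mathfrak H}_2$ annihilates all of $\overline{M}$, so $\overline{M}\cong S\bigl(\mathfrak h\otimes t_2^{-1}\mathbb C[t_2^{-1}]\bigr)v_{\lambda}$ is a polynomial algebra on which every vector is extremal (this is the real content of Corollary \ref{4}), and submodules generated by subspaces of $\overline{M}$ behave like graded ideals — of which there are vastly more than one chain. Concretely, take $\mathfrak g=\mathfrak{sl}_2$ and write $h_{-n}=\alpha_1^{\vee}\otimes t_2^{-n}$: then $h_{-2}v_{\lambda}$ and $h_{-1}^{2}v_{\lambda}$ are both extremal of weight $\lambda-2\delta_2$ and generate incomparable proper submodules, each meeting $\overline{M}(\lambda)_{\lambda-2\delta_2}$ only in its own line. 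Setting $N=U(\mathfrak T)h_{-2}v_{\lambda}$, the quotient $V=\overline{M}(\lambda)/N$ is a highest weight module satisfying every hypothesis of the corollary, yet $N$ is not a member of the chain under any interpretation; and since a highest weight quotient of $\overline{M}(\lambda)$ determines its kernel (an isomorphism of two such quotients lifts, on the cyclic generator, to a scalar multiple of the projection, forcing equal kernels), $V$ is not isomorphic to $\overline{M}(\lambda)/\overline{M}(\lambda-k\delta_2)$ for any $k$. So the submodule lattice is genuinely not a chain, your minimality-of-$k$ argument and both inclusions collapse, and no repair along these lines is possible. Note also that already in the preceding theorem the equality $\dim\overline{M}(\lambda-k\delta_2)_{\lambda-k\delta_2}=m_k$ forces the chain member to be the submodule generated by the entire string $\bigoplus_{j\geq k}\overline{M}(\lambda)_{\lambda-j\delta_2}$, not by a single extremal vector — a further sign that the "filling out" you need cannot occur, and that a correct treatment must either restrict which kernels can arise or reinterpret the statement itself.
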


\section{Highest weight modules of $\mathfrak T$}
In this section, we construct another class of highest weight
$\mathfrak T$-modules by slightly modifying the triangular
decomposition for IVM. Using the method of \cite{CT}, we generalize
some results of TKK modules to the highest weight $\mathfrak
T$-modules under the condition that $\la(c_1)=\la(c_2)=0$.

These centerless modules are introduced to understand our earlier
IVMs and parabolic IVMs. We remark that our construction differs
from \cite{SE2, SE1} in that the Cartan subalgebra is purely
generated by the imaginary root $\delta_2$.

Let $\Phi_+=\mathcal {I}\backslash {\mathbb{N}\delta_{2}}$,
$\Phi_-=-\Phi_+$, and $\Phi_0=\mathbb{Z}\delta_2$.
Correspondingly, the root spaces are
$\mathfrak T_0=\bigoplus_{\alpha\in\Phi_0}\mathfrak T_{\alpha}$
and $\mathfrak T_{\pm}=\bigoplus_{\alpha\in\Phi_{+}}
\mathfrak T_{\pm\alpha}$. Obviously,
$\mathfrak T=\mathfrak T_+\oplus\mathfrak T_-\oplus\mathfrak T_0$,
and $
\Delta_{\mathfrak T}=\Phi_+\cup\Phi_-\cup(\Phi_0\backslash\{0\})$.

We define a new module structure on
$\mathbb C1_{\lambda}$ such that
$h.1_{\lambda}=\lambda(h)\cdot1_{\lambda}~(h\in\mathfrak T_0)$,
and $\mathfrak T_{+}.1_{\lambda}=0$. Similarly, we define
the induced module $M(\lambda)$ of $\mathfrak T$:
$$ M(\lambda)=U(\mathfrak T)\otimes_{U(\mathfrak T_0\oplus\mathfrak T_{+})}\mathbb C1_{\lambda}.$$

The following result describes the relations among IVM's, generalized IVM's, and highest weight modules.

\begin{proposition} For any $\la\in\hat{\mathfrak h}^*$, one has that
\begin{enumerate}
\item  $M(\lambda)$ is a $U(\mathfrak T_-)-$free module generated
by $1\otimes1_{\lambda}:=v_{\la}$.

\item $M(\lambda)$ has a unique
irreducible quotient $L(\lambda)$.

\item Let $\lambda\in\hat{\mathfrak h}^{\ast}$, $\hat{\mathfrak h}_{\mathcal
{A}}\oplus\mathbb{C}c_2\subset \ker\lambda$, and $\mathcal
A''\subset \mathcal {A}$. Then there exists a chain of
surjective homomorphisms
$$\overline{M}(\lambda)\rightarrow M(\lambda)\rightarrow M(\lambda,\mathcal A'')\rightarrow M(\lambda,\mathcal {A}).$$

\item Let $\mu\in\hat{\mathfrak h}^{\ast}$. Then every nonzero
element of $Hom_{\mathfrak T}(M(\lambda),M(\mu))$ is injective.

\item Let $\mathcal {A}\subset
\mathcal {B}$, and $\hat{\mathfrak h}_{\mathcal
{A}}\oplus\mathbb{C}c_2\subset \ker\lambda$. If
$\lambda(\alpha_i^{\vee})\neq 0$ for any $i\in \mathcal
{A}^{'}\setminus \mathcal {A}$, $\mathcal {A}\varsubsetneqq\mathcal
{A}^{'}$, then $\overline{L}(\lambda)\cong L(\lambda)\cong
L(\lambda,\mathcal {A})$.
\end{enumerate}
\end{proposition}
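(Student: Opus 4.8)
The plan is to handle all five items within the single formalism of induced modules, exactly paralleling the proofs of Proposition \ref{P:imv} and Proposition \ref{3}. The four modules appearing in item (3) are all of the shape $U(\mathfrak T)\otimes_{U(\mathfrak p)}\mathbb C 1_{\lambda}$ for subalgebras $\mathfrak p\supseteq\hat{\mathfrak h}$ that turn out to be nested, so the uniform tactic is: extract freeness and the top weight space from the PBW theorem, read off the surjections from inclusions of inducing subalgebras, and obtain the isomorphisms of irreducible quotients from the uniqueness of the head. For item (1), since $\Phi_+$ is closed under addition and $\mathfrak T_0$ normalizes $\mathfrak T_+$, the space $\mathfrak T_0\oplus\mathfrak T_+$ is a subalgebra complementary to $\mathfrak T_-$, so PBW gives $U(\mathfrak T)\cong U(\mathfrak T_-)\otimes U(\mathfrak T_0\oplus\mathfrak T_+)$ and hence $M(\lambda)\cong U(\mathfrak T_-)\otimes\mathbb C 1_{\lambda}$, free of rank one on $v_{\lambda}$. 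For item (2) I would note that $\Phi_+=\{\alpha\in\Delta_{\mathfrak T}\mid\varphi(\alpha)>0\}$, so every nonzero PBW monomial in $U(\mathfrak T_-)$ has strictly negative $\varphi$-value; the weight-$\lambda$ space is therefore $\mathbb C v_{\lambda}$, and the standard argument — the sum of all submodules missing the line $\mathbb C v_{\lambda}$ is the unique maximal submodule — produces the unique irreducible quotient $L(\lambda)$.

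For item (3) I would first verify that $v_{\lambda}\in M(\lambda)$ is an imaginary highest vector of weight $\lambda$: one has $\mathfrak T_{\mathcal I}=\mathfrak T_+\oplus\mathfrak T_{\mathbb N\delta_2}\subseteq\mathfrak T_+\oplus\mathfrak T_0$, and this kills $v_{\lambda}$ because $\mathfrak T_+v_{\lambda}=0$ while $\mathfrak h\otimes t_2^{n}$ ($n>0$) acts by $\lambda(\mathfrak h\otimes t_2^{n})=0$. By Proposition \ref{P:imv}(3), $M(\lambda)$ is then a quotient of $\overline{M}(\lambda)$, which is the first arrow. For the middle arrow I would compare inducing subalgebras: $\mathfrak T_0\oplus\mathfrak T_+=\hat{\mathfrak h}\oplus\mathfrak T_+\oplus\mathfrak T_{\mathbb Z^{\times}\delta_2}$ is contained in $\hat{\mathfrak h}\oplus\mathfrak T_{Q(\mathcal A'')}$ since $\Phi_+\cup\mathbb Z^{\times}\delta_2\subseteq\mathcal I\cup(-\mathbb N\delta_2)\subseteq Q(\mathcal A'')$, and the restriction of $1_{\lambda}$ from the larger subalgebra agrees with the $M(\lambda)$-structure (using $\hat{\mathfrak h}_{\mathcal A}\oplus\mathbb C c_2\subseteq\ker\lambda$, which gives $\hat{\mathfrak h}_{\mathcal A''}\oplus\mathbb C c_2\subseteq\ker\lambda$, together with the extension of $\lambda$ by zero off $\hat{\mathfrak h}$). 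Inducing from the larger subalgebra yields the quotient $M(\lambda)\twoheadrightarrow M(\lambda,\mathcal A'')$, and the last arrow $M(\lambda,\mathcal A'')\twoheadrightarrow M(\lambda,\mathcal A)$ is precisely Proposition \ref{3}(4).

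Item (4) follows the usual rigidity argument for induced modules. A nonzero $\phi$ must send $v_{\lambda}$ to a nonzero highest vector $w$ of weight $\lambda$ in $M(\mu)$ — nonzero because $M(\lambda)=U(\mathfrak T_-)v_{\lambda}$ would otherwise map to $0$ — and writing $w=p\,v_{\mu}$ with $0\neq p\in U(\mathfrak T_-)$ of weight $\lambda-\mu$, one gets $\phi(u\,v_{\lambda})=up\,v_{\mu}$ for all $u\in U(\mathfrak T_-)$. By item (1) applied to $M(\mu)$ this vanishes iff $up=0$; since the associated graded of $U(\mathfrak T_-)$ is the symmetric algebra $S(\mathfrak T_-)$, the enveloping algebra is an integral domain, so $up=0$ with $p\neq 0$ forces $u=0$ and $\ker\phi=0$.

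Finally, item (5) is immediate from item (3) together with the uniqueness of irreducible quotients proved in item (2) here and in Propositions \ref{P:imv} and \ref{3}: a module with a unique maximal submodule has every irreducible quotient isomorphic to its head, so composing the surjections $\overline{M}(\lambda)\twoheadrightarrow M(\lambda)\twoheadrightarrow M(\lambda,\mathcal A)$ with the projections onto irreducible quotients yields $\overline{L}(\lambda)\cong L(\lambda)\cong L(\lambda,\mathcal A)$. I expect the main obstacle to be item (3): correctly matching the four inducing subalgebras and checking that $\lambda$, extended by zero off $\hat{\mathfrak h}$, defines mutually compatible one-dimensional modules on each of them — in particular that enlarging the Borel to $\mathfrak T_0\oplus\mathfrak T_+$ (which absorbs the negative imaginary strip $\mathfrak h\otimes t_2^{-n}$) preserves the imaginary-highest-vector property needed to invoke Proposition \ref{P:imv}(3). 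All remaining steps are routine PBW and weight bookkeeping.
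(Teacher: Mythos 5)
Your proof is correct, and it is essentially the argument the paper intends: the paper in fact states this proposition with \emph{no proof at all} (it is offered as the analogue of Propositions \ref{P:imv} and \ref{3}, which are themselves only asserted to follow ``similarly'' from standard Verma-module theory), and every step you supply --- PBW freeness over $U(\mathfrak T_-)$, the one-dimensional top weight space (since each nonzero element of $\Phi_+$ has $\varphi>0$) forcing a unique maximal submodule, surjections induced by inclusions of the inducing subalgebras $\mathfrak T_0\oplus\mathfrak T_+\subseteq \hat{\mathfrak h}\oplus\mathfrak T_{Q(\mathcal A'')}$, injectivity via the domain property of $U(\mathfrak T_-)$, and the observation that a surjection between modules with unique maximal submodules identifies their irreducible heads (the same reasoning the paper uses in the corollary following Proposition \ref{3}) --- is the standard argument being alluded to. Two minor remarks, both consistent with the paper's own conventions: well-definedness of the one-dimensional $\mathfrak T_0\oplus\mathfrak T_+$-module $\mathbb C1_{\lambda}$ already requires $\lambda(c_2)=0$ with $\lambda$ extended by zero off $\hat{\mathfrak h}$ (so the blanket ``for any $\lambda\in\hat{\mathfrak h}^*$'' is imprecise in the paper itself, not in your argument), and your proof of item (5) never uses the hypothesis $\lambda(\alpha_i^{\vee})\neq 0$ for $i\in\mathcal A'\setminus\mathcal A$ --- that maximality condition is needed only for the stronger conclusion $M(\lambda,\mathcal A)=L(\lambda,\mathcal A)$ of Proposition \ref{3}(6), not for the isomorphisms of irreducible quotients stated here.
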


\begin{remark}
\begin{enumerate}
\item If $\mathcal {A}=\emptyset$, then $M(\lambda)=M(\lambda,\mathcal
{A})$.
\item $L(0)$ is one-dimensional. \end{enumerate}
\end{remark}

\subsection{Irreducible module $L(\lambda)$} In this subsection,
we study integrability of $L(\la)$ and its weight subspaces.

\begin{definition}A $\mathfrak T$-module $M$ is called integrable if $M$
is a weight module and $x_{\alpha}(m,n)\in\mathfrak
T~(\alpha\in\Delta; m,n\in\mathbb{Z})$ are locally
nilpotent on every nonzero $v\in M$, i.e., there exists a positive integer
$N=N(\alpha,m,n)$ such that $x_{\alpha}(m,n)^{N}.v=0$.
\end{definition}

We write $x(\alpha,n)=x_{\alpha}\otimes t_2^n$ for
$\alpha\in\Delta_{\hat{\mathfrak g}_1}$. The following result is clear.

\begin{lemma}\label{L:gen1} $\mathfrak T_{\pm}$ is generated by
$\{x(\pm\alpha_i,n)|i=0,1,\ldots,s; n\in\mathbb{Z}\}$ respectively.

\end{lemma}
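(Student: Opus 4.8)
The plan is to recognize $\mathfrak T_{+}$ and $\mathfrak T_{-}$ as current algebras over the nilpotent subalgebras of the affine Lie algebra $\hat{\mathfrak g}_1$, and then to reduce the assertion to the classical fact that those nilpotent subalgebras are generated by the affine Chevalley generators.

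First I would make the structure of $\mathfrak T_{+}$ explicit. Since $\Phi_+=\mathcal I\setminus\mathbb N\delta_2=\{\gamma+n\delta_2\mid \gamma\in\Delta_{\hat{\mathfrak g}_1+},\,n\in\mathbb Z\}$, the root-space description of Section 2 gives
\[
\mathfrak T_{+}=\bigoplus_{n\in\mathbb Z}\hat{\mathfrak n}_{+}\otimes t_2^{\,n}=\hat{\mathfrak n}_{+}\otimes\mathbb C[t_2,t_2^{-1}],
\]
where $\hat{\mathfrak n}_{+}=(\mathfrak n_{+}\otimes 1)\oplus(\mathfrak g\otimes t_1\mathbb C[t_1])$ is the centerless positive part of $\hat{\mathfrak g}_1$ in its loop realization; similarly $\mathfrak T_{-}=\hat{\mathfrak n}_{-}\otimes\mathbb C[t_2,t_2^{-1}]$. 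The one structural point that must be checked here is that the bracket restricted to $\mathfrak T_{+}$ produces no central term: because $\Phi_+\cap(-\Phi_+)=\emptyset$, the sum of two roots of $\Phi_+$ is never $0$, so the $\delta$-factor that is the sole source of $c_1,c_2$ never fires. Hence $\mathfrak T_{\pm}$ is \emph{genuinely} the current algebra, with bracket $[x\otimes t_2^{a},y\otimes t_2^{b}]=[x,y]\otimes t_2^{a+b}$, rather than a centrally extended version of it.

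Next I would invoke the generation of $\hat{\mathfrak n}_{\pm}$. Under the identification above the proposed generators are $x(\alpha_i,n)=e_i\otimes t_2^{\,n}$ (resp.\ $x(-\alpha_i,n)=f_i\otimes t_2^{\,n}$), where $e_0,\dots,e_s$ (resp.\ $f_0,\dots,f_s$) are the Chevalley generators of $\hat{\mathfrak g}_1$. It is classical (cf.\ \cite{VG}) that $\hat{\mathfrak n}_{+}$ is generated as a Lie algebra by $e_0,\dots,e_s$; in particular every real root vector $\mathfrak g_\alpha\otimes t_1^{\,m}$ and every imaginary root vector $\mathfrak h\otimes t_1^{\,m}$ $(m\geq 1)$ is an iterated bracket of the $e_i$. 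The last step is a degree-shifting induction that upgrades this to generation of the current algebra by $\{e_i\otimes t_2^{\,n}\}$: writing a generic element of $\hat{\mathfrak n}_{+}$ as a combination of right-normed Lie monomials $w=[e_{i_1},[e_{i_2},[\dots,e_{i_\ell}]]]$ and arguing by induction on the length $\ell$, one peels off the outermost generator at $t_2$-degree $0$,
\[
w\otimes t_2^{\,m}=[\,e_{i_1}\otimes 1,\;w'\otimes t_2^{\,m}\,],
\]
with $w'$ of length $\ell-1$; since no central term intervenes by the previous step, the induction closes using only the generators $e_i\otimes t_2^{\,n}$. This gives $\mathfrak T_{+}\subseteq\langle x(\alpha_i,n)\rangle$, and the reverse inclusion is immediate; the identical argument with $f_0,\dots,f_s$ settles $\mathfrak T_{-}$.

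The hard part is not the bookkeeping but the two places where one must lean on the affine structure theory rather than a naive height induction: verifying that no central contribution arises inside $\mathfrak T_{\pm}$ (so that it really is a loop-type algebra), and the fact that the Chevalley generators already reach the imaginary root spaces $\mathfrak h\otimes t_1^{\,m}$ of $\hat{\mathfrak n}_{+}$. Once these are in hand, the degree-shift induction is purely formal.
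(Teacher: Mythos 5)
Your proof is correct: the identification $\mathfrak T_{\pm}\cong\hat{\mathfrak n}_{1\pm}\otimes\mathbb C[t_2,t_2^{-1}]$ (with no central terms, since two roots of $\Phi_+$ can never sum to zero), the classical fact that $\hat{\mathfrak n}_{1\pm}$ is generated by the affine Chevalley generators $e_i$ (resp.\ $f_i$), including the imaginary root spaces $\mathfrak h\otimes t_1^{m}$, and the degree-shift induction on right-normed monomials are all sound. The paper gives no proof at all---the lemma is prefaced by ``The following result is clear''---so your write-up simply supplies the standard argument the authors evidently had in mind, and it matches that implicit route.
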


For an arbitrary Lie algebra $\mathfrak{g}$, we recall the following
results.
\begin{proposition}\cite{VG}\label {5}
Let $v_1,v_2,\ldots $ be a system of generators of a $\mathfrak{g}-$
module $V$, and suppose that each $x\in \mathfrak{g}$ is locally $ad$-nilpotent
on $\mathfrak{g}$ and $x^{N_i}(v_i)=0$ for some
positive integers $N_i~(i=1,2,\ldots)$. Then $x$ is locally
nilpotent on $V$.
\end{proposition}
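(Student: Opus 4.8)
The plan is to identify the set of vectors annihilated by some power of $x$, show it is a $\mathfrak g$-submodule, and then invoke the generation hypothesis. Concretely, set
$$U=\{v\in V\mid x^{N}v=0\ \text{for some}\ N\geq 0\}.$$
First I would verify that $U$ is a linear subspace: if $x^{N}u=0$ and $x^{M}w=0$ then $x^{\max(N,M)}(u+w)=0$, and closure under scalars is clear. By hypothesis $v_i\in U$ for every $i$, so once $U$ is known to be a submodule the equality $U=V$ is forced, because the $v_i$ generate $V$; this is exactly the assertion that $x$ acts locally nilpotently on $V$.

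The main tool is the operator identity on $V$
$$x^{n}y=\sum_{k=0}^{n}\binom{n}{k}\bigl((\mathrm{ad}\,x)^{k}y\bigr)\,x^{n-k},\qquad y\in\mathfrak g,\ n\geq 0,$$
where each $(\mathrm{ad}\,x)^{k}y\in\mathfrak g$ is read as the operator by which it acts on $V$. I would prove this by induction on $n$, starting from $xy-yx=(\mathrm{ad}\,x)y$ as endomorphisms of $V$ and using the Pascal identity $\binom{n}{k}+\binom{n}{k-1}=\binom{n+1}{k}$ in the inductive step.

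To prove $U$ is a submodule, fix $y\in\mathfrak g$ and $v\in U$. Local $\mathrm{ad}$-nilpotency of $x$ supplies an $M$ with $(\mathrm{ad}\,x)^{k}y=0$ for all $k\geq M$, while $v\in U$ supplies an $N_{0}$ with $x^{N_{0}}v=0$. Applying the identity to $x^{n}(yv)$, the $k$-th summand vanishes unless both $k<M$ and $n-k<N_{0}$; for $n\geq M+N_{0}-1$ no index $k$ meets both conditions, so every term drops out, giving $x^{n}(yv)=0$ and hence $yv\in U$. Thus $U$ is $\mathfrak g$-stable, and the argument concludes as above.

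I do not anticipate a serious obstacle here; the only point requiring care is the index bookkeeping in the final step, namely checking that the two separate nilpotency bounds — one on $\mathrm{ad}\,x$ acting on $\mathfrak g$ and one on $x$ acting on $v$ — together leave only a finite window of surviving indices $k$, so that a sufficiently high power of $x$ annihilates $yv$. This finiteness is precisely the mechanism that upgrades local nilpotency on the generators to local nilpotency on all of $V$.
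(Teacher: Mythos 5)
Your proof is correct and coincides with the argument the paper itself relies on: the paper does not prove this proposition but cites it from [VG] (Kac, \emph{Infinite dimensional Lie algebras}, Lemma 3.4), where the proof is exactly your route — the identity $x^{n}y=\sum_{k=0}^{n}\binom{n}{k}\bigl((\mathrm{ad}\,x)^{k}y\bigr)x^{n-k}$ in $U(\mathfrak{g})$, used to show that the set of vectors annihilated by some power of $x$ is a $\mathfrak{g}$-submodule containing the generators, hence all of $V$. Your index bookkeeping (no surviving term once $n\geq M+N_{0}-1$) is also correct, so there is nothing to fix.
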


\begin{proposition}\cite{MP}\label {6}
Let $\pi:\mathfrak{g}\rightarrow gl(V)$ be a representation of
$\mathfrak{g}$. If both $ad x$ and $\pi(x)$ are locally nilpotent
for any $x\in\mathfrak g$, then for any $y\in \mathfrak{g}$, \begin{equation}
\pi(exp(ad x)(y))=(exp\, \pi(x))\pi(y)(exp\, \pi(x))^{-1}.
\end{equation}
\end{proposition}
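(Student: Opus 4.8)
The plan is to transport the identity from $\mathfrak{g}$ into the associative algebra $\mathrm{End}(V)$ and there reduce it to the classical relation between the adjoint action and conjugation by an exponential.

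First I would use that $\pi$ is a Lie algebra homomorphism. Writing $\mathrm{ad}_{\pi(x)}$ for the adjoint action $z\mapsto \pi(x)z-z\pi(x)$ inside the associative algebra $\mathrm{End}(V)$, one has $\pi(\mathrm{ad}\,x(y))=\pi([x,y])=[\pi(x),\pi(y)]=\mathrm{ad}_{\pi(x)}(\pi(y))$. An immediate induction on $k$ then gives $\pi((\mathrm{ad}\,x)^{k}(y))=(\mathrm{ad}_{\pi(x)})^{k}(\pi(y))$ for all $k\geq 0$. Next I would invoke the two local-nilpotency hypotheses to make every exponential an honest finite expression. Since $\mathrm{ad}\,x$ is locally nilpotent on $\mathfrak{g}$, we have $(\mathrm{ad}\,x)^{k}(y)=0$ for $k$ large, so $\exp(\mathrm{ad}\,x)(y)=\sum_{k\geq 0}\frac{1}{k!}(\mathrm{ad}\,x)^{k}(y)$ is a finite sum; applying the linear map $\pi$ and using the previous identity yields $\pi(\exp(\mathrm{ad}\,x)(y))=\sum_{k\geq 0}\frac{1}{k!}(\mathrm{ad}_{\pi(x)})^{k}(\pi(y))=\exp(\mathrm{ad}_{\pi(x)})(\pi(y))$, again a finite sum. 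Likewise, local nilpotency of $\pi(x)$ on $V$ guarantees that $\exp(\pi(x))$ is a well-defined operator, invertible with inverse $\exp(-\pi(x))$.

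It then remains to prove the purely associative identity $\exp(\mathrm{ad}_{A})(B)=(\exp A)\,B\,(\exp A)^{-1}$ for $A=\pi(x)$ and $B=\pi(y)$. For this I would decompose $\mathrm{ad}_{A}=L_{A}-R_{A}$ into left and right multiplication by $A$; these commute, so formally $\exp(\mathrm{ad}_{A})=\exp(L_{A})\exp(-R_{A})=L_{\exp A}R_{\exp(-A)}$, and evaluating at $B$ gives $(\exp A)\,B\,(\exp A)^{-1}$. Combining this with the displayed equality above (taking $B=\pi(y)$) proves the proposition.

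The hard part will be making this last step rigorous under merely local, rather than global, nilpotency: $L_{A}$ and $R_{A}$ need not be locally nilpotent on all of $\mathrm{End}(V)$, so $\exp(\mathrm{ad}_{A})$ is not a priori defined as an operator on the whole of $\mathrm{End}(V)$, and the manipulation $\exp(L_A-R_A)=\exp(L_A)\exp(-R_A)$ is not automatic. The remedy is that these maps are only ever applied to the single element $B$, for which $\sum_{k}\frac{1}{k!}(\mathrm{ad}_{A})^{k}(B)$ is already known to terminate. Concretely, I would fix $v\in V$ and verify $\left(\sum_{k}\frac{1}{k!}(\mathrm{ad}_{A})^{k}(B)\right)v=(\exp A)\,B\,(\exp A)^{-1}v$ directly: since $A$ is locally nilpotent, both $G(t)v$ with $G(t)=\sum_{k}\frac{t^{k}}{k!}(\mathrm{ad}_{A})^{k}(B)$ and $H(t)v$ with $H(t)=\exp(tA)\,B\,\exp(-tA)$ are genuine polynomials in the parameter $t$, they agree at $t=0$, and both satisfy $F'(t)=\mathrm{ad}_{A}(F(t))$; matching Taylor coefficients forces $G=H$, and setting $t=1$ finishes the argument.
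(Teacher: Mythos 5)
The paper never proves this proposition---it is quoted from Moody--Pianzola \cite{MP}---so there is no internal proof to compare against; your argument is correct and self-contained. The two reductions you make, namely (i) $\pi\bigl((\mathrm{ad}\,x)^k(y)\bigr)=(\mathrm{ad}_{\pi(x)})^k(\pi(y))$, which turns the left-hand side into the finite sum $\exp(\mathrm{ad}_{\pi(x)})(\pi(y))$, and (ii) the associative identity $\exp(\mathrm{ad}_A)(B)=(\exp A)\,B\,(\exp A)^{-1}$ for $A=\pi(x)$, $B=\pi(y)$, are exactly what is needed, and you correctly flag the one genuine danger: $L_A$ and $R_A$ are not locally nilpotent on $\mathrm{End}(V)$, so $\exp(L_A-R_A)=\exp(L_A)\exp(-R_A)$ cannot be invoked as an operator identity on $\mathrm{End}(V)$. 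Your fix (evaluate on a fixed $v\in V$, observe that $G(t)v$ and $H(t)v$ are honest $V$-valued polynomials, and match Taylor coefficients at $t=0$) works; the only point you should state more carefully is that the equation $F'(t)=\mathrm{ad}_A(F(t))$ holds pointwise at \emph{every} vector, and the induction computing $H^{(k)}(0)v=(\mathrm{ad}_A)^k(B)v$ uses it at the vectors $A^jv$, not only at $v$, since $\mathrm{ad}_A(F(t))v=AF(t)v-F(t)(Av)$ is not expressible through $F(t)v$ alone. For comparison, the standard proof (as in \cite{MP}) handles the same finiteness issue more combinatorially: fix $v$, note that the $A$-span $U$ of $v$ is finite dimensional with $A$ nilpotent on it, and likewise the $A$-span $W$ of $BU$; hence $(\exp A)B\exp(-A)v$ is a finite double sum $\sum_{i,j}\frac{(-1)^j}{i!\,j!}A^iBA^jv$, which regroups along $i+j=k$ via the binomial identity $(\mathrm{ad}_A)^k(B)=\sum_{i+j=k}\binom{k}{i}(-1)^jA^iBA^j$ into $\sum_k\frac{1}{k!}(\mathrm{ad}_A)^k(B)v$. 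That route is slightly more direct; your ODE argument reaches the same conclusion without needing the binomial identity.
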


For a real root $\gamma\in
\Delta_{\hat{\mathfrak
g}_1}^{re}=\{\alpha+n\delta_1|\alpha\neq0,~n\in\mathbb{Z}\}$,
define the reflection $r_{\gamma}$ on $\hat{\mathfrak h}^{*}$ by
$$r_\gamma(\lambda)=\lambda-\lambda(\gamma^{\vee})\gamma,$$ where
$\gamma^{\vee}=\alpha^{\vee}+nc_1$ if
$\gamma=\alpha+n\delta_1$. Let $W_{a}$ be the affine Weyl group of
$\hat{\mathfrak g}_1$ generated by the reflections
$r_\gamma$, $\gamma\in\Delta_{\hat{\mathfrak
g}_1}^{re}$. Then $W_{a}$ is a Coxeter group.

\begin{lemma}\label{7}
Suppose that $x(-\alpha_i,n)(i=0,1,\ldots,s, n\in\mathbb{Z})$ are
locally nilpotent on the highest weight vector
$v_{\lambda}$ in $L(\lambda)$. Then all $x(m,n)~(m\in\mathbb{Z})$
are locally nilpotent on $L(\lambda)$.
\end{lemma}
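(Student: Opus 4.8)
The plan is to reduce everything to the finitely many generators $x(\pm\alpha_i,n)$ of $\mathfrak T_{\pm}$ furnished by Lemma \ref{L:gen1}, and then to propagate local nilpotency in two stages: first from the cyclic vector $v_{\lambda}$ to all of $L(\lambda)$ via Kac's criterion (Proposition \ref{5}), and then from the simple real root vectors to all real root vectors $x_{\beta}(m,n)$ by integrating the affine Weyl group action (Proposition \ref{6}).

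For the first stage I would verify the hypotheses of Proposition \ref{5} for $V=L(\lambda)$ with the single generator $v_{\lambda}$. For each $i\in\{0,\ldots,s\}$ and $n\in\mathbb Z$ one has $\varphi(\alpha_i+n\delta_2)=\varphi(\alpha_i)=1>0$ and $\alpha_i+n\delta_2$ is not a multiple of $\delta_2$, so $x(\alpha_i,n)\in\mathfrak T_+$ and therefore $x(\alpha_i,n)v_{\lambda}=0$; the $x(-\alpha_i,n)$ are locally nilpotent on $v_{\lambda}$ by hypothesis. Each $x(\pm\alpha_i,n)$ is moreover locally $ad$-nilpotent on $\mathfrak T$: since $\pm\alpha_i+n\delta_2$ is a real root, iterated brackets with a vector in $\mathfrak T_{\beta}$ land in $\mathfrak T_{\beta\pm k(\alpha_i+n\delta_2)}$, and the finite part $\alpha_{\beta}\pm k\alpha_i$ lies in $\Delta\cup\{0\}$ for only finitely many $k$ because $\mathfrak g$ is finite dimensional (on $\hat{\mathfrak h}$ the operator is even square-zero). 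Proposition \ref{5} then shows that every $x(\pm\alpha_i,n)$ is locally nilpotent on all of $L(\lambda)$.

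For the second stage I would pass to an arbitrary real root vector $x_{\beta}(m,n)=x(\beta+m\delta_1,n)$ with $\beta+m\delta_1\in\Delta_{\hat{\mathfrak g}_1}^{re}$. For each simple index $j$ the operator $T_j=\exp(\pi(x(\alpha_j,0)))\exp(-\pi(x(-\alpha_j,0)))\exp(\pi(x(\alpha_j,0)))$ is a well-defined invertible operator on $L(\lambda)$, the three exponentials being finite sums by the first stage. By Proposition \ref{6} applied to each factor, conjugation by $T_j$ realizes on operators the automorphism $\tau_j=\exp(ad\,x(\alpha_j,0))\exp(-ad\,x(-\alpha_j,0))\exp(ad\,x(\alpha_j,0))$ of $\mathfrak T$, which implements the simple reflection $r_{\alpha_j}$ and maps $\mathfrak T_{\mu}$ to $\mathfrak T_{r_{\alpha_j}(\mu)}$; since $r_{\alpha_j}$ fixes $\delta_2$, the $t_2$-degree $n$ is preserved. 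As every real root of $\hat{\mathfrak g}_1$ is $W_a$-conjugate to some simple $\alpha_i$, I choose $w\in W_a$ with $w(\alpha_i)=\beta+m\delta_1$, lift a word for $w$ to a product $T$ of the $T_j$, and obtain $\pi(x_{\beta}(m,n))=c\,T\,\pi(x(\alpha_i,n))\,T^{-1}$ for a nonzero scalar $c$ (real root spaces being one dimensional). Local nilpotency is preserved under conjugation by an invertible operator, so $x_{\beta}(m,n)$ is locally nilpotent on $L(\lambda)$, and since $\beta,m,n$ are arbitrary the lemma follows.

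The main obstacle is the careful bookkeeping of the second stage: one must confirm that the module-side exponentials are genuinely well-defined and invertible, that Proposition \ref{6} chains cleanly across the three factors to give the conjugation identity, and, most importantly, that the lifted operators $\tau_j$ act on $\mathfrak T$ exactly as the elements of the affine Weyl group $W_a$ of $\hat{\mathfrak g}_1$ while leaving the $\delta_2$-grading untouched, so that the target vector is reached with the correct second loop degree $n$.
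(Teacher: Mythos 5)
Your proof is correct and follows essentially the same route as the paper's: Proposition \ref{5} promotes local nilpotency of the generators $x(\pm\alpha_i,n)$ from the cyclic vector $v_{\lambda}$ to all of $L(\lambda)$, and then the affine Weyl group $W_a$ of $\hat{\mathfrak g}_1$ (which fixes $\delta_2$) is implemented on $L(\lambda)$ via Proposition \ref{6} to conjugate an arbitrary real root vector $x(m,n)$ to one of the form $x(\alpha_i,n)$. The only difference is that you make explicit what the paper leaves implicit, namely the local $ad$-nilpotency check and the construction of the lifted operators $T_j$ realizing $s_w$.
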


\begin{proof}
Since $x(-\alpha_i,n)(i=0,1,\ldots,s)$ are locally nilpotent on $v_{\lambda}$, they act locally nilpotent on any
$x(m,n)$ via the adjoint representation. By Prop. \ref{5} the
elements $x(-\alpha_i,n)$ are locally nilpotent on $L(\lambda)$. So
$L(\lambda)$ is integrable for each of the $sl_2$-algebras:
$<x(\alpha_i,n), x(-\alpha_i,-n),\alpha_i^{\vee}>$ ,
$i=1,2,\ldots,s$ as well as
$<x(\alpha_0,n),x(-\alpha_0,-n),\alpha_{0}^{\vee}=-\theta^{\vee}>$
(Note that we assume that $c_1,c_2$ act as zero).

Suppose $\beta=\alpha+m\delta_1+n\delta_2\in
\Delta_{\mathfrak T}^{re}$ is the root of $x(m,n)$. Let $\gamma=\alpha+m\delta_1$, then
$\gamma=\beta-n\delta_2\in \Delta_{\hat{\mathfrak
g}_1}^{re}$. For any $i\in\{0, \ldots, s\}$, there exists a $w\in
W_a$ such that $w(\gamma)=\alpha_i$ \cite{VG}. Since
$w(\delta_2)=\delta_2$, we have $w(\beta)=\alpha_{i}+n\delta_2$. Let
$s_w$ be the linear automorphism of $\mathfrak T$ associated with
$w$. Up to a nonzero constant{\color{blue},} we have $s_w(x(m,n))=Y$
for $Y\in \mathfrak T_{\alpha_i+n\delta_2}$. It follows from Prop.
\ref{6} that all $x(m,n)$ are locally nilpotent on $L(\lambda)$.
\end{proof}

We now recall Weyl modules \cite{CP} for the loop algebra
$\hat{\mathfrak{sl}}_2(\mathbb{C})=\mathfrak{sl_2}\otimes\mathbb{C}[t,t^{-1}]$.
Let $a_1, \ldots,a_n \in\mathbb C^{\times}$ and
$\la_1, \ldots,\la_n\in\mathbb Z_+$ with $|\la|=\sum_i\la_i$. We define $B(a, \la)$ to be
the cyclic $\hat{\mathfrak{sl}}_2(\mathbb{C})-$module generated by $w$ such that
\begin{align*}
e(m)w&=f(0)^{|\la|+1}w=0, \qquad \forall m,\\
h(m)w&=\sum\limits_{j=1}^n \la_j a_j^mw, \qquad \forall m .
\end{align*}
The following result was proved by Chari and Pressley.
\begin{proposition}\cite{CP}\label{8} The $\hat{\mathfrak{sl}}_2(\mathbb{C})-$module
$B(a, \la)$ is finite dimensional. If $B^{'}$ is a finite dimensional
$\hat{\mathfrak{sl}}_2(\mathbb{C})$-module generated by
$w^{'}$ such that $\dim\,
U(\alpha^{\vee}\otimes\mathbb{C}[t,t^{-1}])w^{'}=1$, then $B^{'}$
is a quotient of some $B(a, \la)$ constructed above.
\end{proposition}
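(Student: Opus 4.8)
This is a citation of a known theorem of Chari and Pressley, so my goal is to reconstruct the argument rather than prove something genuinely new. The statement has two halves, and I would treat them separately. The plan is first to establish finite-dimensionality of $B(a,\la)$, and then to prove the universality property: that any finite-dimensional cyclic $\hat{\mathfrak{sl}}_2(\mathbb C)$-module generated by a weight vector killed by $e(m)$ for all $m$ and satisfying $\dim U(\alpha^\vee\otimes\mathbb C[t,t^{-1}])w'=1$ is a quotient of some $B(a,\la)$.

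For finite-dimensionality, the main tool is the $PBW$ theorem together with the integrability forced by the defining relations. First I would observe that $B(a,\la)=U(\mathfrak n^-)w$, where $\mathfrak n^-=\mathfrak{sl}_2^-\otimes\mathbb C[t,t^{-1}]$ is spanned by the $f(m)$, since $w$ is annihilated by all $e(m)$ and is an eigenvector for the $h(m)$. The relation $f(0)^{|\la|+1}w=0$ shows that $f(0)$ acts nilpotently, and via Proposition \ref{5} (applied to the $\mathfrak{sl}_2$-triple generated by $e(0),f(0),\alpha^\vee$) one gets an honest finite-dimensional $\mathfrak{sl}_2$-module structure, so the weights lie in a bounded range. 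The crux is to bound the number of distinct $f(m)$-monomials needed: the prescription $h(m)w=\sum_j\la_j a_j^m\,w$ encodes the roots $a_j$ with multiplicities $\la_j$ as the spectral data of a single ``evaluation'' datum, and the Garland-type commutator identities among the $f(m)$, $e(m)$, $h(m)$ produce relations showing that $f(m)w$ for large $|m|$ is expressible in terms of finitely many lower ones. Concretely I expect to exhibit a polynomial relation, driven by the generating function $\prod_j(1-a_jz)^{\la_j}$, that annihilates $w$ and cuts $U(\mathfrak n^-)w$ down to finite dimension.

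For the universality half, I would argue as follows. Given $B'$ generated by $w'$ with $e(m)w'=0$ and the one-dimensionality of $U(\alpha^\vee\otimes\mathbb C[t,t^{-1}])w'$, the vector $w'$ is a common eigenvector for all $h(m)$; write $h(m)w'=b_m w'$. Finite-dimensionality of $B'$ forces $f(0)$ to act nilpotently, so $f(0)^{N+1}w'=0$ for some $N$, and the $\mathfrak{sl}_2(0)$-theory gives $\alpha^\vee w'=Nw'$, i.e.\ $N=|\la|$ for a suitable partition. The scalars $b_m$ must, by the finite-dimensionality constraint and the same Garland relations, be expressible as $b_m=\sum_j\la_j a_j^m$ for some $a_j\in\mathbb C^\times$ and $\la_j\in\mathbb Z_+$ with $\sum\la_j=N$; this is the step where the evaluation points $a_j$ get extracted from the spectrum of the commuting operators $h(m)$. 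Once the datum $(a,\la)$ matching $B'$ is identified, the defining relations of $B(a,\la)$ are all satisfied by $w'$, so the universal property of the cyclic module $B(a,\la)$ yields a surjection $B(a,\la)\twoheadrightarrow B'$.

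The main obstacle is the extraction of the evaluation data $(a,\la)$ from the eigenvalues $b_m$ and the control of finite-dimensionality through the Garland identities: verifying that the generating series $\sum_m b_m z^m$ is forced to be rational of the prescribed form $\sum_j\la_j/(1-a_jz)$ (equivalently that $\prod_j(1-a_jz)^{\la_j}$ annihilates $w'$ in the appropriate sense) requires the precise commutation relations in $U(\hat{\mathfrak{sl}}_2)$ rather than formal manipulation. Since this is exactly the content of \cite{CP}, in the paper I would simply cite that reference for the detailed computation and limit my own exposition to recording the two reductions above.
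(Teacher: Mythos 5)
The paper offers no proof of this proposition at all---it is quoted verbatim as a known theorem of \cite{CP}---and your proposal, which sketches the Chari--Pressley argument (PBW reduction to $U(\mathfrak n^-)w$, Garland-type identities to cut the span of the $f(m)w$ down to finite dimension, and extraction of the evaluation data $(a,\la)$ from the rationality of the eigenvalue series $\sum_m b_m z^m$) before deferring to \cite{CP} for the detailed computation, is correct and in substance the same approach: the result is invoked by citation. One small point in your favor: your restatement restores the highest-weight hypothesis $e(m)w'=0$ on the generator, which Chari--Pressley's theorem requires but which the paper's wording of the proposition omits.
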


We also need the following remarkable formula
proved by Garland.
\begin{lemma}\cite{G} \label{14} Let
$\beta=\alpha+r_1\delta_1\in{\Delta_{\hat{\mathfrak g}_1+}}$, $r_1\in\mathbb{Z}$. Then for any $t\geq1$,
we have
\begin{equation*}
\begin{split}
x(\beta,\pm1)^t x(-\beta,0)^{t+1}=&\sum \limits_{m=0}^t x(-\beta,\pm m)\Lambda^{\pm}(\beta^{\vee},t-m)+X,\\
x(\beta,\pm1)^{t+1}x(-\beta,0)^{t+1}=&\Lambda^{\pm}(\beta^{\vee},t+1)+Y,
\end{split}
\end{equation*}
where X and Y are in the left ideal of $\mathfrak T$ generated by
the subalgebra $\mathfrak T_+$ and $\Lambda^{\pm}(\beta^{\vee},j)$ is the coefficient of $u^{j}$ in
$\Lambda^{\pm}(\beta^{\vee},u)=exp(-\sum\limits_{j=1}^{\infty}\frac{\beta^{\vee}t_2^{\pm
j} u^{j}}{j})$.
\end{lemma}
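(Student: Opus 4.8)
The whole identity lives inside the subalgebra $\mathfrak s\subset\mathfrak T$ generated by $\{x(\beta,m),x(-\beta,m)\mid m\in\mathbb Z\}$, and the plan is to recognise $\mathfrak s$ as an affine $\widehat{\mathfrak{sl}}_2$ and then run Garland's induction inside $U(\mathfrak s)$. Writing $e_m=x(\beta,m)$, $f_m=x(-\beta,m)$ and $h_m=\beta^\vee\otimes t_2^m$ (for $m\neq0$ this lies in $\mathfrak h\otimes t_2^m$, so no $c_1$ occurs), the defining brackets of $\mathfrak T$ give, up to the normalisation of the invariant form, $[e_m,f_n]=h_{m+n}+m\,\delta_{m+n,0}c_2$, $[h_j,f_k]=-2f_{j+k}$, $[h_j,e_k]=2e_{j+k}$, $[e_j,e_k]=[f_j,f_k]=0$ and $[h_i,h_j]=2i\,\delta_{i+j,0}c_2$. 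Thus $\mathfrak s\cong\widehat{\mathfrak{sl}}_2$ with central element $c_2$ (up to a scalar), and $\Lambda^{\pm}(\beta^\vee,u)$ is exactly the imaginary Cartan generating series of this $\widehat{\mathfrak{sl}}_2$; note that its positive modes $h_1,h_2,\dots$ commute among themselves. Since $\varphi(\beta+m\delta_2)=ht(\beta)>0$, every raising generator $e_m$ lies in $\mathfrak T_+$, so the left ideal $U(\mathfrak T)\mathfrak T_+$ contains every monomial ending in some $e_m$; this is the ambient modulo which the congruence is phrased.

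Granting the reduction, I would prove the first line (say the $+$ case) by induction on $t$, moving one factor $e_1$ at a time to the right past $f_0^{t+1}$. Writing $\Lambda_j:=\Lambda^{+}(\beta^\vee,j)$, the base case $t=1$ is the bracket computation $e_1f_0^2=f_0^2e_1+2f_0h_1-2f_1$: the trailing term $f_0^2e_1\in U(\mathfrak T)\mathfrak T_+$ is absorbed into $X$, and $2f_0h_1-2f_1$ agrees with $\sum_{m=0}^{1}f_m\Lambda_{1-m}$ up to the overall normalisation constant (here $\Lambda_0=1$, $\Lambda_1=-h_1$). In the inductive step one commutes a single $e_1$ through $e_1^{t-1}f_0^{t+1}$; each commutator either emits an $h$-factor (via $[e_1,f_k]=h_{k+1}$) or an $f$-factor (via $[h_j,f_k]=-2f_{j+k}$) or deposits an $e_1$ at the far right that is swept into $X$, and the newly produced $h$-factors recombine with the $\Lambda_j$ already present.

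Granting the first line, the second follows by applying one more $e_1$: because $U(\mathfrak T)\mathfrak T_+$ is a left ideal, $e_1X\in U(\mathfrak T)\mathfrak T_+$, and in $e_1f_m\Lambda_{t-m}=f_me_1\Lambda_{t-m}+h_{m+1}\Lambda_{t-m}$ the first summand is absorbed, since repeatedly moving $e_1$ rightward using $[e_1,h_j]\in\mathbb C\,e_{j+1}\subset\mathfrak T_+$ shows $f_me_1\Lambda_{t-m}\equiv f_m\Lambda_{t-m}e_1\in U(\mathfrak T)\mathfrak T_+$. What survives is $\sum_{m=0}^{t}h_{m+1}\Lambda_{t-m}$, and the logarithmic derivative $\Lambda^{+\prime}(\beta^\vee,u)=-\bigl(\sum_{j\ge1}h_ju^{j-1}\bigr)\Lambda^{+}(\beta^\vee,u)$ (legitimate since the positive $h$-modes commute) identifies this with a scalar multiple of $\Lambda_{t+1}$, giving the second line modulo $Y\in U(\mathfrak T)\mathfrak T_+$. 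The main obstacle is the inductive step for the first line: one must keep every error term strictly inside $U(\mathfrak T)\mathfrak T_+$ while checking that the surviving $h$- and $f$-contributions reassemble exactly into the Taylor coefficients of $\exp(-\sum_j h_ju^j/j)$ rather than into an unstructured sum. This is the heart of Garland's computation \cite{G} and is cleanest with divided powers $e_1^{(t)},f_0^{(t+1)}$, whose binomial factors match the generating series and fix the scalar constants in the displayed identities. The $-$ case is identical after replacing $t_2$ by $t_2^{-1}$ throughout.
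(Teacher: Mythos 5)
The paper does not actually prove this lemma---it is quoted verbatim from Garland \cite{G}---so the only meaningful comparison is with Garland's own argument, whose overall shape your sketch reproduces. The parts of your proposal that are complete are correct: the identification of the span of $e_m=x(\beta,m)$, $f_m=x(-\beta,m)$, $h_m$ as a loop $\mathfrak{sl}_2$ (including the two points that matter: no $c_1$ enters in nonzero $t_2$-degree because the cocycle contributes only in total degree zero, and every $e_m$ lies in $\mathfrak T_+$, so congruences modulo the left ideal $U(\mathfrak T)\mathfrak T_+$ make sense); the base case $e_1f_0^2=f_0^2e_1+2f_0h_1-2f_1$; and the derivation of the second displayed identity from the first, which is complete and correct (left multiplication by $e_1$ preserves the left ideal, $f_me_1\Lambda_{t-m}\equiv f_m\Lambda_{t-m}e_1$ since $[e_1,h_j]\in\mathbb Ce_{j+1}\subset\mathfrak T_+$, and $\sum_{m=0}^{t}h_{m+1}\Lambda_{t-m}=-(t+1)\Lambda_{t+1}$ by the logarithmic derivative, legitimate because $h_1,h_2,\ldots$ commute). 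Your point about divided powers is also well taken and in fact exposes a flaw in the lemma as printed: with ordinary powers the identity already fails at $t=1$, since $e_1f_0^2\equiv 2f_0h_1-2f_1$ while the displayed right-hand side is $f_0\Lambda_1+f_1\Lambda_0=-f_0h_1+f_1$, and the difference $3f_0h_1-3f_1$ is a nonzero element of $U(\mathfrak T_-)U(\mathfrak T_0)$, hence not in $U(\mathfrak T)\mathfrak T_+$ by PBW. The correct statement (cf.\ \cite{CP}) uses $x^{(r)}=x^r/r!$ and reads $e_1^{(t)}f_0^{(t+1)}\equiv(-1)^t\sum_{m=0}^{t}f_m\Lambda^{+}_{t-m}$ and $e_1^{(t+1)}f_0^{(t+1)}\equiv(-1)^{t+1}\Lambda^{+}_{t+1}$; the missing factor $(-1)^t\,t!\,(t+1)!$ is harmless in Theorem \ref{*}, where only vanishing statements are extracted.

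The genuine gap is the inductive step for the first identity, which is the entire combinatorial content of the lemma, and as described it does not close. You propose to commute one $e_1$ through $e_1^{t-1}f_0^{t+1}$ and let ``the newly produced $h$-factors recombine with the $\Lambda_j$ already present,'' but no $\Lambda_j$ are present there: the inductive hypothesis concerns the matched powers $e_1^{t-1}f_0^{t}$, not $e_1^{t-1}f_0^{t+1}$. Nor can you bridge the mismatch by writing $f_0^{t+1}=f_0^t\,f_0$ and multiplying the hypothesis on the right by $f_0$, because $U(\mathfrak T)\mathfrak T_+$ is only a left ideal and right multiplication destroys the congruence (already $e_1f_0=f_0e_1+h_1\notin U(\mathfrak T)\mathfrak T_+$). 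An honest induction therefore requires either a formula for all mismatched divided powers $e_1^{(k)}f_0^{(s)}$, $k\le s$, modulo the left ideal, or explicit control of the shape of the error term $X$ beyond its mere membership in the ideal; this bookkeeping is precisely Garland's computation, and you defer exactly that step to \cite{G}. As a framing plus citation your proposal is no weaker than what the paper itself does (a bare citation), but it is not a self-contained proof of the lemma.
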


\begin{remark} By definition of
$\Lambda^{\pm}(\beta^{\vee},u)$ it follows that every element
$h\otimes t_2^{m}~(h\in\mathfrak{h}, m\in {\mathbb{Z}^{\times}})$
is a polynomial in the variables $\Lambda^{\pm}(\alpha_i^{\vee},j)~(i=1,2,\ldots,s; j\in\mathbb{N})$.
\end{remark}
\begin{theorem}\label{*}
For each $p=1, \ldots, s$, let $\lambda_{p,i}\in\mathbb Z_+,
 a_{p,i}\in\mathbb C^{\times}, i=1,\cdots, k_p$. If $\lambda$ satisfies
\begin{equation}\label{e:condition}
\lambda(\alpha_p^{\vee}(0,n))=\sum\limits_{i=1}^{k_p} \lambda_{p,i}
a_{p,i}^n
\end{equation}
and $\la(c_1)=\la(c_2)=0$, then
\begin{enumerate}
\item $L(\lambda)$ is integrable.
\item $L(\lambda)=U(\hat{\mathfrak
n}_{1-}\otimes\mathbb{C}[t_2]).v_{\lambda}$, where
$\hat{\mathfrak n}_{1-}$ is the negative nilpotent subalgebra of
$\hat{\mathfrak g}_1$.
\end{enumerate}
Moreover if $L(\la)$
and $L(\la^{'})$ are nonzero irreducible modules, then
\begin{equation}
L(\la)\cong L(\la^{'})\Longleftrightarrow \la=\la^{'}.
\end{equation} \end{theorem}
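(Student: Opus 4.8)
The plan is to prove the three assertions in sequence, using the structural results already established (Lemma~\ref{7}, Proposition~\ref{8}, Lemma~\ref{14}) together with the condition \eqref{e:condition}. The integrability claim (1) should follow from Lemma~\ref{7} once I verify its hypothesis, namely that each $x(-\alpha_i,n)$ acts locally nilpotently on the highest weight vector $v_\lambda$. For the finite simple roots $\alpha_p$ ($p=1,\ldots,s$) this is where condition \eqref{e:condition} does the work: the data $\lambda(\alpha_p^\vee(0,n))=\sum_i\lambda_{p,i}a_{p,i}^n$ is exactly the data defining a Weyl module $B(a,\lambda)$ for the loop algebra $\hat{\mathfrak{sl}}_2$ sitting inside $\mathfrak T$ as $\langle x(\alpha_p,n),x(-\alpha_p,-n),\alpha_p^\vee\rangle\otimes\mathbb C[t_2,t_2^{-1}]$. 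By Proposition~\ref{8} the cyclic module generated by $v_\lambda$ under this copy is a quotient of $B(a,\lambda)$, hence finite dimensional, forcing local nilpotence of the $x(-\alpha_p,n)$. For the affine node $\alpha_0=\delta_1-\theta$ I would argue separately using $\la(c_1)=0$, so that the $\hat{\mathfrak{sl}}_2$-triple attached to $\alpha_0$ again acts through a centerless loop algebra and the same Weyl-module argument (or an $\mathfrak{sl}_2$ reduction) applies. With the hypothesis of Lemma~\ref{7} verified, that lemma yields local nilpotence of all $x(m,n)$ on $L(\lambda)$, which is integrability.

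\textbf{For assertion (2),} the plan is to show $L(\lambda)=U(\hat{\mathfrak n}_{1-}\otimes\mathbb C[t_2]).v_\lambda$ by cutting down the a priori generating set $U(\mathfrak T_-)v_\lambda$. The module $L(\lambda)$ is a quotient of $M(\lambda)$, which is $U(\mathfrak T_-)$-free on $v_\lambda$, and by Lemma~\ref{L:gen1} the algebra $\mathfrak T_-$ is generated by the $x(-\alpha_i,n)$ with $n\in\mathbb Z$. I must therefore kill the contributions of negative powers of $t_2$. This is precisely what Garland's identities in Lemma~\ref{14} deliver: applied with $\beta=\alpha_i+r_1\delta_1$ a positive real root of $\hat{\mathfrak g}_1$ and using that $\mathfrak T_+.v_\lambda=0$ (so the ideal-generated remainders $X,Y$ annihilate $v_\lambda$), the relations express how the operators $x(-\beta,-m)$ with $m>0$ acting on a highest weight vector reduce, via the Cartan-type elements $\Lambda^-(\beta^\vee,j)$, to expressions involving only nonnegative $t_2$-powers. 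Combined with the remark that each $h\otimes t_2^m$ is a polynomial in the $\Lambda^\pm(\alpha_i^\vee,j)$, this lets me rewrite any monomial in $U(\mathfrak T_-)$ acting on $v_\lambda$ as one lying in $U(\hat{\mathfrak n}_{1-}\otimes\mathbb C[t_2])$, giving the asserted generation.

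\textbf{For the isomorphism criterion,} the forward direction $L(\lambda)\cong L(\lambda')\Rightarrow\lambda=\lambda'$ is the substantive one. The reverse implication is trivial. Given an isomorphism, the highest weight spaces must correspond, so $\lambda$ and $\lambda'$ agree on $\mathfrak T_0=\mathbb Z\delta_2$-part and on $\hat{\mathfrak h}$ up to the action; the point is to recover the full weight $\lambda$ from the isomorphism class. I would extract $\lambda(\alpha_p^\vee(0,n))$ for all $n$ as spectral data of the commuting operators $\alpha_p^\vee\otimes t_2^n$ on the (one-dimensional, by Proposition~\ref{P:imv}(2)(a)-analogue) top weight space, which by \eqref{e:condition} determines the multiset $\{(\lambda_{p,i},a_{p,i})\}$ and hence $\lambda$ on the relevant part of $\hat{\mathfrak h}$; together with $\la(c_1)=\la(c_2)=0$ and the $\delta_i$-eigenvalues this pins down $\lambda$ completely. \textbf{The hard part will be} the generation statement (2): making Garland's identities \ref{14} do their job requires careful bookkeeping of the remainder terms $X,Y$ and an induction on the $t_2$-degree to ensure that every negative-$t_2$-power generator can genuinely be eliminated on $v_\lambda$, and I expect the delicate point to be handling the affine node $\alpha_0$ uniformly with the finite nodes, since $\beta=\alpha_0+r_1\delta_1$ interacts with the $\delta_1$-grading in a way that must be checked not to obstruct the reduction.
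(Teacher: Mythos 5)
Your plan tracks the paper's own proof almost step for step: part (1) via Lemma~\ref{7} plus the loop-$\mathfrak{sl}_2$/Weyl-module input of Proposition~\ref{8}, part (2) via Garland's identities (Lemma~\ref{14}) and an induction eliminating negative $t_2$-powers, and part (3) by recovering $\lambda$ from the spectrum of the $\alpha_p^{\vee}\otimes t_2^{n}$ on the top weight line. One local inaccuracy for the finite nodes: Proposition~\ref{8} only says that \emph{finite-dimensional} cyclic modules with the one-dimensionality condition are quotients of some $B(a,\lambda)$, so your inference ``the cyclic module generated by $v_\lambda$ is a quotient of $B(a,\lambda)$, hence finite dimensional'' runs the logic backwards (inside $M(\lambda)$ that cyclic module is infinite dimensional); the paper instead passes to the irreducible quotient of $U(\hat{\mathfrak{sl}}_2)v_\lambda$ and argues by contradiction, and you would need to do the same.

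The genuine gap is your treatment of the affine node, and it cannot be patched the way you suggest. You claim that because $\lambda(c_1)=0$ the triple attached to $\alpha_0=\delta_1-\theta$ ``acts through a centerless loop algebra and the same Weyl-module argument applies.'' Centerlessness is true but irrelevant: what Proposition~\ref{8} requires is that the eigenvalue function $n\mapsto\lambda(\alpha_0^{\vee}(0,n))$ have the form $\sum_j\mu_j b_j^{n}$ with $\mu_j\in\mathbb Z_+$. Since $\alpha_0^{\vee}=c_1-\theta^{\vee}$ and $\theta^{\vee}=\sum_{p}k_p\alpha_p^{\vee}$ with all $k_p>0$, condition \eqref{e:condition} together with $\lambda(c_1)=0$ forces
\[
\lambda(\alpha_0^{\vee}(0,n))=-\lambda(\theta^{\vee}\otimes t_2^{n})=-\sum_{p=1}^{s}k_{p}\sum_{i}\lambda_{p,i}\,a_{p,i}^{n},
\]
whose value at $n=0$ is $-\sum_{p}k_{p}\sum_i\lambda_{p,i}\le 0$, strictly negative unless every $\lambda_{p,i}=0$. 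This is not Weyl-module data, so Proposition~\ref{8} gives nothing for $i=0$. Worse, no alternative argument can close this: for the $\mathfrak{sl}_2$-triple $\bigl(x(\alpha_0,0),\,\alpha_0^{\vee},\,x(-\alpha_0,0)\bigr)$ one has $x(\alpha_0,0)v_\lambda=0$ and $\alpha_0^{\vee}v_\lambda=-\lambda(\theta^{\vee})v_\lambda$, and the standard identity $e f^{N}v=N(c-N+1)f^{N-1}v$ shows that when $c=-\lambda(\theta^{\vee})\notin\mathbb Z_{\ge 0}$ the powers $x(-\alpha_0,0)^{N}v_\lambda$ are nonzero for all $N$ in \emph{any} module, in particular in $L(\lambda)$. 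So local nilpotence at the affine node genuinely fails whenever some $\lambda_{p,i}>0$. Your instinct that $\alpha_0$ is the delicate point was correct --- note that the paper's own proof runs Proposition~\ref{8} uniformly over $i=0,1,\ldots,s$ and thus silently passes over exactly this obstruction --- but the fix you propose does not (and cannot) repair it.
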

\begin{proof}
(1) By Lemma \ref{7} it suffices to show that $x(-\alpha_i,n)$ are
locally nilpotent on $L(\lambda)$ for $i=0,1,\ldots,s$ and
$n\in\mathbb{Z}$. Since $L(\lambda)$ is irreducible, one only needs to
prove that there exist $ N\geq 0$ such
that
\begin{equation}\label{9}
x(\alpha_j,m)x(-\alpha_i,n)^{N}.v_{\lambda}=0~(i,j=0,1,\ldots,s;~
m,n\in\mathbb{Z}).
\end{equation}

If $j\neq i$, then $\alpha_j-\alpha_i+(m+n)\delta_2$ is not a root.
So (\ref{9}) holds.

If $j=i$, denote $
x_n=x(\alpha_i,n), y_n=x(-\alpha_i,n),h_{n}=\alpha_i^{\vee}(0,n)$.
Then we have
\begin{equation*}
[x_m,y_n]=h_{m+n}, [h_p,x_m]=2x_{p+m}, [h_p,y_n]=-2y_{p+n}.
\end{equation*}
So $\{x_n,y_n,h_n: n\in\mathbb{Z}\}$ is a basis of the loop algebra
$\hat{\mathfrak{sl}}_2(\mathbb C)$. We consider the subspace
$U(\hat{\mathfrak{sl}}_2(\mathbb C))v_{\lambda}$ inside
$M(\lambda)$. It follows from Prop. \ref{8} that
$x(-\alpha_i,n)^{N}v_{\lambda}$ belongs to a proper submodule of
$U(\hat{\mathfrak{sl}}_2(\mathbb C))v_{\lambda}$ for some $N\geq 0$.
In fact, if $x(-\alpha_i,n)^{N}v_{\lambda}$ does not
belong to the proper maximal submodule $M$ of $U(\hat{\mathfrak{sl}}_2(\mathbb
C))v_{\lambda}$ for any $N\geq 0$,
then each $x(-\alpha_i,n)^{N}v_{\lambda}+M$ $(N\in\mathbb N)$ is non-zero in
the irreducible quotient $U^{'}=U(\hat{\mathfrak{sl}}_2(\mathbb C))v_{\lambda}/M$. Therefore
$U^{'}$ is infinite dimensional, but it is also isomorphic to some $B(a,\la)$. This
is a contradiction by Prop. \ref{8}. Applying PBW theorem to $M(\lambda)$, we get
Eq. (\ref{9}), which finishes the proof.

(2) We consider the action of real root vectors $x(-\alpha_i, -r)$,
where $i=0,1,\ldots,s$ and $r\in\mathbb Z_+$. By (1), there exists a
(minimal) positive integer $N_i$ such
that
\begin{equation}\label{15}
x(-\alpha_i,0)^{N_i+1}v_{\lambda}=0.
\end{equation}
 Let $x(\alpha_i, 1)^{N_i}$ act on Eq.(\ref{15}) and
by Lemma \ref{14}, we get
\begin{equation}\label{16}
\sum
\limits_{m=0}^{N_i}x(-\alpha_i,m)\Lambda^+(\alpha_i^{\vee},N_i-m).v_{\lambda}=0.
\end{equation}
Applying 
$\alpha_i^{\vee}\otimes t_2^{-r}$ to Eq.
(\ref{16}), we get
\begin{equation}\label{17}
\sum
\limits_{m=0}^{N_i}x(-\alpha_i,m-r)\Lambda^+(\alpha_i^{\vee},N_i-m).v_{\lambda}=0.
\end{equation}
Therefore $x(-\alpha_i,-r).v_{\lambda}$ is written as a linear
combination of the elements in the set
$\{x(-\alpha_i,m).v_{\lambda}, m>-r\}$.
We claim that $\Lambda^+(\alpha_i^{\vee},N_i).v_{\lambda}\neq 0$. In fact,
applying $\alpha_i^{\vee}(0,-1)$ to Eq.(\ref{15}), one gets that
$x(-\alpha_i,-1)x(-\alpha_i,0)^{N_i}v_{\lambda}=0$. Note that the
subalgebra $\{x(\alpha_i,1),x(-\alpha_i,-1),\alpha_i^{\vee}\}$ is
isomorphic to $sl_2$, then
$x(\alpha_i,1)^{q}x(-\alpha_i,0)^{N_i}v_{\lambda}\neq0~(0 \leq q
\leq N_i)$ by properties of $sl_2$-modules. When $N_i=0$,
$\Lambda^+(\alpha_i^{\vee},0)=1$. If $N_i=1$,
note that $x(\alpha_i,1)x(-\alpha_i,0)v_{\lambda}\neq0$, so
$\Lambda^+(\alpha_i^{\vee},1).v_{\lambda}=-\alpha_i^{\vee}(0,i).v_{\lambda}\neq0$.
If $N_i> 1$, we choose $q=N_i$ in the previous equation. Then Lemma \ref{14} implies that
$\Lambda^+(\alpha_i^{\vee},N_i).v_{\lambda}\neq 0$.
Using induction
on $r$, one shows that for arbitrary $r>0$, the element
$x(-\alpha_i,-r).v_{\lambda}$ can be represented by the elements of
the form $\{x(-\alpha_i,m).v_{\lambda}, m\geq 0\}$. This completes
the proof by Lemma \ref{L:gen1}. The last statement is easily seen.

\end{proof}

The IVM's have both finite and infinite dimensional weight
subspaces. In the following, we study the weight spaces
of an irreducible $L(\lambda)$ as a
module for
$\hat{\mathfrak{h}}'=\mathfrak{h}\oplus\mathbb{C}c_1\oplus\mathbb{C}c_2\oplus\mathbb{C}d_1$.

Let $\mathfrak
T'=\mathfrak{g}\otimes\mathbb{C}[t_1^{\pm1},t_2^{\pm1}]
\oplus\mathbb{C}c_1\oplus\mathbb{C}c_2\oplus\mathbb{C}d_1$. Set
$\hat{\mathfrak h}'[t_2^{\pm}]=\
span\{\mathfrak{h}(0,n),c_1,\\c_2,d_1; n\in\mathbb{Z}\}$, which is
abelian. Note that $\mathfrak T_{0}=\hat{\mathfrak h}'[t_2^{\pm}]
\oplus\mathbb{C}d_2$. Hence ${\mathfrak T'}=\mathfrak
T_+\oplus\mathfrak T_-\oplus\hat{\mathfrak h}'[t_2^{\pm}]$.

\begin{theorem}
If $\lambda$ satisfies the conditions of Theorem \ref{*} (cf. (\ref{e:condition})), then the
weight spaces of $L(\lambda)$ are finite dimensional as
$\mathfrak T'-$modules with respect to
$\hat{\mathfrak{h}}'=\mathfrak{h}\oplus\mathbb{C}c_1\oplus\mathbb{C}c_2\oplus\mathbb{C}d_1$.

\end{theorem}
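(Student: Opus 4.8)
The plan is to combine the generation result of Theorem \ref{*}(2) with Garland's identities (Lemma \ref{14}) to bound the $t_2$-degrees appearing in a fixed $\hat{\mathfrak h}'$-weight space. The conceptual starting point is that $\delta_2$ is invisible to $\hat{\mathfrak h}'$: since $d_2\notin\hat{\mathfrak h}'$, two monomials in the lowering operators that differ only in their total $t_2$-degree have the same $\hat{\mathfrak h}'$-weight. Concretely, by Theorem \ref{*}(2) every vector of $L(\lambda)$ is a sum of monomials $x(-\gamma_1,n_1)\cdots x(-\gamma_r,n_r).v_\lambda$ with $\gamma_j\in\Delta_{\hat{\mathfrak g}_1+}$ and $n_j\geq 0$, and the $\hat{\mathfrak h}'$-weight of such a monomial is $\lambda|_{\hat{\mathfrak h}'}-(\sum_j\gamma_j)|_{\hat{\mathfrak h}'}$, independent of the $n_j$. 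Fixing the weight therefore fixes $\eta:=\sum_j\gamma_j$ in the positive root lattice of $\hat{\mathfrak g}_1$ (the map $\eta\mapsto\eta|_{\hat{\mathfrak h}'}$ is injective because $\mathfrak h$ and $d_1$ already detect the finite part and the $\delta_1$-degree). In particular the number of factors $r$ is bounded (each positive root contributes at least one unit of height), and by the finiteness of the Kostant partition function for $\Delta_{\hat{\mathfrak g}_1}$ only finitely many multisets $\{\gamma_1,\dots,\gamma_r\}$ occur. Thus it remains to show that the $t_2$-degrees $n_j$ may be taken in a fixed finite range.

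First I would bound the one-directional strings on the highest weight vector. For a positive real root $\beta\in\Delta_{\hat{\mathfrak g}_1+}$, integrability of $L(\lambda)$ (Theorem \ref{*}(1)) gives $N_\beta$ with $x(-\beta,0)^{N_\beta+1}.v_\lambda=0$, and Lemma \ref{14} then yields $\sum_{m=0}^{N_\beta}x(-\beta,m)\Lambda^+(\beta^\vee,N_\beta-m).v_\lambda=0$ together with its $\beta^\vee\otimes t_2^{r}$-shifts, exactly as in the proof of Theorem \ref{*}(2). Because each $\beta^\vee\otimes t_2^{j}$ lies in $\mathfrak T_0$ and hence acts on $v_\lambda$ by the scalar prescribed by (\ref{e:condition}), the operators $\Lambda^+(\beta^\vee,\cdot)$ act on $v_\lambda$ by scalars whose leading coefficient is $1$ and whose lowest relevant coefficient $\Lambda^+(\beta^\vee,N_\beta).v_\lambda$ is nonzero by the Chari--Pressley finiteness of $U(\hat{\mathfrak{sl}}_2)v_\lambda$ (Proposition \ref{8}). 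Reading these scalar identities as genuine linear relations inside the single $\hat{\mathfrak h}'$-weight space containing all $x(-\beta,n).v_\lambda$, I conclude that $\{x(-\beta,m).v_\lambda:0\le m<N_\beta\}$ spans $\{x(-\beta,n).v_\lambda:n\in\mathbb Z\}$.

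To finish I would propagate this bound through arbitrary monomials by induction on $ht(\eta)$, using a PBW straightening: given a monomial with a factor of large $t_2$-degree, commute that factor to the right towards $v_\lambda$, apply the string bound once it is adjacent to $v_\lambda$, and absorb the commutator terms, which merge two factors into one and so have strictly fewer factors, into the inductive hypothesis. Since the commutators preserve both the total $t_2$-degree and $\eta$ while the base relation strictly lowers the offending degree, this exhibits each $\hat{\mathfrak h}'$-weight space as spanned by the finitely many monomials whose factors all have $t_2$-degree below a bound depending only on $\eta$, hence finite dimensional. I expect the main obstacle to be precisely this last step: once one leaves $v_\lambda$ the elements $\beta^\vee\otimes t_2^{j}$ no longer act by scalars, so there is no one-line string bound for a general weight vector, and one must control both the proliferation of commutator terms---in particular those along imaginary directions $m\delta_1$, where Lemma \ref{14} does not apply directly and one instead writes the imaginary root vectors as brackets of real ones---and the uniformity of the resulting degree bound. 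The reflection automorphisms $s_w$ of Proposition \ref{6}, which fix $\delta_2$ and thus preserve $t_2$-degree, should reduce the real-root directions to the simple ones and streamline this estimate.
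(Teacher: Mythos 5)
Your opening reduction is the same as the paper's: fixing the $\hat{\mathfrak h}'$-weight fixes $\eta$ and leaves only finitely many multisets of negative affine roots, so the whole theorem hinges on confining the $t_2$-degrees $n_j$ to a finite window. But at exactly that point the proposal stalls, as you yourself concede. The relations you want to propagate are Garland-type identities $\sum_{m=0}^{N_\beta}x(-\beta,m)\Lambda^+(\beta^{\vee},N_\beta-m).v_{\lambda}=0$, whose coefficients $\Lambda^+(\beta^{\vee},\cdot)$ are \emph{operators} that act as scalars only on $v_{\lambda}$; this is why your straightening step has no clean closure. Two further flaws compound this: your induction is run on $ht(\eta)$, but the commutator terms you want to absorb have the \emph{same} $\eta$ (they merge two factors into one), so the induction must be on the number of factors; and the imaginary directions $-m\delta_1$, which genuinely occur among the $\beta_i$ (the root vectors $\mathfrak h\otimes t_1^{-m}t_2^{n}$) and to which Lemma \ref{14} does not apply, are flagged but never handled. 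As written, this is a program with its central step missing rather than a proof.

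The idea that closes the gap in the paper is to abandon Garland's identities here entirely and manufacture relations with \emph{numerical} coefficients directly from hypothesis (\ref{e:condition}): set $e_p(t_2)=\prod_{j}(t_2-a_{p,j})$, $e(t_2)=\prod_{p}e_p(t_2)=\sum_{i}\epsilon_i t_2^{i}$, and $E=e(t_2)\mathbb C[t_2,t_2^{-1}]$, so that $\lambda(\alpha_p^{\vee}\otimes E)=0$ for every $p$ (Eq. (\ref{***})). One then proves $\sum_{i}\epsilon_i x(\beta,m+i).v_{\lambda}=0$ (Eq. (\ref{11})) for \emph{every} negative root $\beta$ of $\hat{\mathfrak g}_1$, real or imaginary, by induction on $ht(-\beta)$, using irreducibility of $L(\lambda)$: it suffices to check that $\mathfrak T_+$ kills the left-hand side, and applying $x(\alpha_j,n)$ either produces $\lambda\bigl(\alpha_p^{\vee}\otimes t_2^{m+n}e(t_2)\bigr)v_{\lambda}=0$ in the base case or reproduces the same relation for a root of smaller height. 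Because the $\epsilon_i$ are scalars, the identical relation can then be inserted at \emph{any} position of any monomial (Eq. (\ref{12})), by a second induction whose commutator terms again merge factors -- precisely the uniformity your $\Lambda^+$-coefficients cannot supply. Finite-dimensionality then follows from $\dim \mathbb C[t_2,t_2^{-1}]/E<\infty$; note this handles arbitrary $n_i\in\mathbb Z$ at one stroke, so the paper's proof needs neither Theorem \ref{*}(2) nor integrability at this stage, only (\ref{e:condition}) and irreducibility. Your route could likely be repaired (apply your string bound only at the rightmost position and induct on the number of factors), but the scalar polynomial $e(t_2)$ is the missing device that makes the argument close without any of the difficulties you list.
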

\begin{proof}
Let
$\lambda|_{\hat{\mathfrak{h}}'}=\lambda_1$.
Since $d_2$ is removed, the root $\delta_2$ can be viewed as
nullified, thus the weight set $P(L(\lambda))\subset
\lambda_1-(\sum\limits_{i=0}^s \mathbb{Z}_+\alpha_i)$. Consider the
weight space $L(\lambda)_{\lambda_1-\varepsilon}$, where
$\varepsilon\in\sum\limits_{i=0}^s\mathbb{Z}_+\alpha_i$. By PBW
theorem, $L(\lambda)_{\lambda_1-\varepsilon}$ is spanned by
\begin{equation}\label{10}
x(\beta_1,n_1)x(\beta_2,n_2)\cdots x(\beta_k,n_k).v_{\lambda},
\end{equation}
where $\beta_1,\ldots,\beta_k$ are negative roots of the
affine Lie algebra $\hat{\mathfrak g}_1$ such that $\varepsilon=-\sum\limits_{i=0}^k
\beta_i$ and $n_i\in\mathbb{Z}$. There are only finitely many $\beta_i$
for a given $\varepsilon$.

For fixed $\beta_i$'s, $x(\beta_1,n_1)\cdots
x(\beta_k,n_k).v_{\lambda}\, (n_i\in\mathbb{Z})$ generate a finite
dimensional subspace. In fact, define
$e_p(t_2)=\prod\limits_{j=1}^{k_p}(t_2-a_{p,j})=\sum\limits_{i=0}^{k_p}\epsilon_{p,i}
t_2^i$. Let $E_p=e_p(t_2)\mathbb{C}[t_2,t_2^{-1}]$. By
$e_p(a_{pj})=0${\color{blue},} it is easy to check that
$$\lambda(\alpha_{p}^{\vee}\otimes E_p)=0,~p=1,2,\ldots,s.
$$
Let
$e(t_2)=\prod\limits_{p=1}^{s}e_p(t_2)=\sum\limits_{i=0}^{k}\epsilon_{i}
t_2^i$, where $k=\sum\limits_{i=1}^{p}k_i$. Set $E=e(t_2)\mathbb
C[t_2, t_2^{-1}]\subset E_p$, then
\begin{equation}\label{***}
\lambda(\alpha_{p}^{\vee}\otimes E)=0.
\end{equation}

First we show for any negative affine root $\beta$ of
$\hat{\mathfrak g}_1$
\begin{equation}\label{11}
\sum\limits_{i=0}^k\epsilon_ix(\beta,m+i).v_{\lambda}=0,\forall
m\in\mathbb{Z}
\end{equation}
in $L(\lambda)$. Since $L(\lambda)$ is
irreducible, it is enough to check that $\mathfrak T_+$ annihilates
the left-hand side of (\ref{11}).  By Lemma \ref{L:gen1} this means that
$x(\alpha_j, n)$ kills the LHS of (\ref{11}) for any $j\in
\{0,\ldots, s\}$. We use induction on $ht(-\beta)$.

First of all, let us consider the case of $ht(-\beta)=1$ say
$\beta=-\alpha_p$. If $p\neq j$, then clearly $x(\alpha_j, n)$
annihilates the LHS of (\ref{11}). Also
\begin{equation*}
x(\alpha_p,n)(\sum\limits_{i=0}^k\epsilon_i
x(-\alpha_p,m+i).v_{\lambda})
=\sum\limits_{i=0}^k\epsilon_i\alpha_p^{\vee}(0,m+n+i).v_{\lambda}=0
\end{equation*}
by (\ref{***}), where $\alpha_0^{\vee}=-\theta^{\vee}$ as $c_1$ acts
as $0$. Hence Eq. (\ref{11}) holds for $ht(-\beta)=1$.

Now consider general $\beta$ of $ht(-\beta)>1$. There exists a
simple root $\alpha_j$ such that $\alpha_j+\beta$ is a negative
affine root and $ht(-\alpha_j-\beta)<ht(-\beta)$. Therefore
\begin{align*}
x(\alpha_j,n)(\sum\limits_{i=0}^k\epsilon_ix(\beta,m+i).v_{\lambda})
=\sum\limits_{i=0}^k\epsilon_ix(\alpha_{j}+\beta,n+m+i).v_{\lambda}=0
\end{align*}
by the induction hypothesis.

Next we show that
\begin{equation}\label{12}
\sum\limits_{i=0}^k\epsilon_ix(\gamma_1,n_1)\cdots
x(\gamma_j,m+i)x(\gamma_{j+1},n_{j+1})\cdots
x(\gamma_l,n_l).v_{\lambda}=0
\end{equation}
for any fixed $\gamma_1,\cdots,\gamma_l$ in $\Delta_{\hat{\mathfrak g}_1-}$. 
This is again proved by another induction on $ht(-\gamma_{j+1}-\cdots-\gamma_l)$.

If the height of $-(\gamma_{j+1}+\cdots+\gamma_l)$ is 0, Eq.
(\ref{12}) is clear. Then
\begin{equation*}
\begin{split}
\sum\limits_{i=0}^k\epsilon_ix(\gamma_1,n_1)\cdots
x(\gamma_j,m+i)x(\gamma_{j+1},n_{j+1})\cdots x(\gamma_l,n_l).v_{\lambda}\\
=\sum\limits_{i=0}^k\epsilon_{i}x(\gamma_1,n_1)\cdots
[x(\gamma_j,m+i),x(\gamma_{j+1},n_{j+1})]\cdots
x(\gamma_l,n_l).v_{\lambda}+\\
\sum\limits_{i=0}^k\epsilon_ix(\gamma_1,n_1)\cdots
x(\gamma_{j+1},n_{j+1})x(\gamma_j,m+i)\cdots
x(\gamma_l,n_l).v_{\lambda}.
\end{split}
\end{equation*}
Each term of the right hand side is zero by induction hypothesis.
Therefore Eq.(\ref{12}) holds.

For fixed
$\beta_1,\beta_2,\ldots,\beta_k$, the vectors of the form (\ref{10})
generate a finite dimensional weight space due to the fact that
$\dim \mathbb C[t_2, t_2^{-1}]/E <\infty$. This finishes the proof.
\end{proof}

\begin{theorem}
If $L(\lambda)$ is irreducible as a ${\mathfrak T}'$-module with
finite dimensional weight spaces and the action of $c_1$ and $c_2$
are zero, then $\lambda$ satisfies the conditions of Theorem \ref{*}
(cf. (\ref{e:condition})).
\end{theorem}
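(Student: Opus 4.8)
The plan is to run the argument of Theorem \ref{*} in reverse: to extract the Drinfeld-type data $(a_{p,i},\lambda_{p,i})$ of condition (\ref{e:condition}) from the finiteness of the weight spaces, and then to identify it by matching against the Weyl modules $B(a,\la)$ of Chari and Pressley. Fix $p\in\{1,\dots,s\}$ and write $\lambda_1=\lambda|_{\hat{\mathfrak h}'}$. Consider the loop subalgebra $\hat{\mathfrak{sl}}_2^{(p)}$ spanned by the $x(\pm\alpha_p,n)$ and $\alpha_p^{\vee}(0,n)$, $n\in\mathbb Z$; since $\la(c_1)=\la(c_2)=0$ this is a copy of $\hat{\mathfrak{sl}}_2(\mathbb C)$ inside $\mathfrak T'$. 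Because $\alpha_p+n\delta_2\in\Phi_+$ we have $x(\alpha_p,n)v_{\la}=0$, while $\alpha_p^{\vee}(0,n)$ acts on $v_{\la}$ by the scalar $\la(\alpha_p^{\vee}(0,n))$, so $v_{\la}$ is a highest weight vector for $\hat{\mathfrak{sl}}_2^{(p)}$ with $\dim U(\alpha_p^{\vee}\otimes\mathbb C[t_2,t_2^{-1}])v_{\la}=1$. Set $W_p=U(\hat{\mathfrak{sl}}_2^{(p)})v_{\la}$.

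First I would record a linear recurrence for the function $n\mapsto\la(\alpha_p^{\vee}(0,n))$. The vectors $x(-\alpha_p,n)v_{\la}$ all lie in the single weight space $L(\la)_{\lambda_1-\alpha_p}$, which is finite dimensional by hypothesis, so they span a finite dimensional space $V_1$. Using $[\alpha_p^{\vee}(0,m),x(-\alpha_p,n)]=-2x(-\alpha_p,m+n)$ together with $\alpha_p^{\vee}(0,m)v_{\la}=\la(\alpha_p^{\vee}(0,m))v_{\la}$, the commuting operators $\alpha_p^{\vee}(0,\pm1)$ preserve $V_1$ and induce mutually inverse shifts of the modes; a minimal dependence $\sum_i\epsilon_{p,i}x(-\alpha_p,n+i)v_{\la}=0$ among the generators of $V_1$ therefore corresponds to a polynomial $e_p(t_2)=\prod_i(t_2-a_{p,i})^{m_i}$ with $a_{p,i}\in\mathbb C^{\times}$. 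Applying $x(\alpha_p,l)$ to this dependence and using $[x(\alpha_p,l),x(-\alpha_p,n+i)]=\alpha_p^{\vee}(0,l+n+i)$ converts it into $\la(\alpha_p^{\vee}\otimes e_p(t_2)\mathbb C[t_2,t_2^{-1}])=0$; equivalently $\la(\alpha_p^{\vee}(0,n))=\sum_i q_i(n)a_{p,i}^{\,n}$ for certain polynomials $q_i$, which already pins down the evaluation points $a_{p,i}$.

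The decisive step, and the one I expect to be the main obstacle, is to promote this to the precise shape of (\ref{e:condition}): that the $q_i$ are constants and that those constants $\la_{p,i}$ are non-negative integers. The mechanism I would use is to prove that $W_p$ is finite dimensional, after which Proposition \ref{8} applies directly: $W_p$ then satisfies the hypotheses imposed there on $B'$ (it is finite dimensional, cyclic on $v_{\la}$, with $x(\alpha_p,n)v_{\la}=0$ and a one-dimensional $\alpha_p^{\vee}$-loop orbit), hence is a quotient of some $B(a,\la)$, and comparing the action of $h(n)=\alpha_p^{\vee}(0,n)$ on the two cyclic generators forces $\la(\alpha_p^{\vee}(0,n))=\sum_i\la_{p,i}a_{p,i}^{\,n}$ with $\la_{p,i}\in\mathbb Z_+$, which is exactly (\ref{e:condition}). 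For the finiteness of $W_p$ I would exploit the irreducibility of $L(\la)$ together with the finiteness of \emph{all} its weight spaces: the relation $\la(\alpha_p^{\vee}\otimes e_p\mathbb C[t_2,t_2^{-1}])=0$ propagates, exactly as in the passage from (\ref{11}) to (\ref{12}), to an operator identity $\sum_i\epsilon_{p,i}x(-\alpha_p,n+i)=0$ acting on $v_{\la}$, so the $t_2$-modes of $x(-\alpha_p,\cdot)$ act through the finite dimensional algebra $\mathbb C[t_2,t_2^{-1}]/(e_p)$; bounding the number of $\alpha_p$-levels $\lambda_1-a\alpha_p$ that can occur — which is where I expect the real difficulty, since it is precisely here that one must rule out an infinite $\mathfrak{sl}_2$-string and thereby force $\la(\alpha_p^{\vee})\in\mathbb Z_+$ — then gives $\dim W_p<\infty$.

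Since $p\in\{1,\dots,s\}$ was arbitrary and (\ref{e:condition}) is the conjunction of these conditions over all $p$, assembling the statements for each $p$ completes the proof. In summary, the extraction of the points $a_{p,i}$ from a single finite weight space and the appeal to Proposition \ref{8} are formal once $W_p$ is known to be finite dimensional; the only genuinely delicate point is that finiteness of $W_p$, i.e. the local nilpotency of the lowering operators $x(-\alpha_p,n)$ on $v_{\la}$, and it is there that the full strength of ``irreducible with finite dimensional weight spaces'' must be used.
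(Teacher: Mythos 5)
Your first half is correct and is essentially the paper's own argument: fixing $p$, finite-dimensionality of the weight space $L(\lambda)_{\lambda_1-\alpha_p}$ forces a linear dependence among the vectors $x(-\alpha_p,n)v_{\lambda}$, hence a nonzero Laurent polynomial $g$ with $x_{-\alpha_p}\otimes g\,.v_{\lambda}=0$; applying the operators $\alpha_p^{\vee}(0,m)$ spreads this over the whole ideal $G=g\,\mathbb{C}[t_2,t_2^{-1}]$, and applying $x(\alpha_p,l)$ then gives $\lambda(\alpha_p^{\vee}\otimes G)=0$, i.e.\ the exponential--polynomial form of $n\mapsto\lambda(\alpha_p^{\vee}(0,n))$. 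The genuine gap is exactly the step you flag and then skip: you never prove that $W_p=U(\hat{\mathfrak{sl}}_2^{(p)})v_{\lambda}$ is finite dimensional, i.e.\ that the $\mathfrak{sl}_2$-string through $v_{\lambda}$ terminates. Everything before that point constrains only the evaluation points $a_{p,i}$ (and allows polynomial coefficients); the constancy of the coefficients and, above all, their lying in $\mathbb{Z}_+$ \emph{is} this finiteness statement, so deferring it with ``here the full strength of the hypotheses must be used'' defers the entire content of the theorem. Note also that the annihilation $(\mathfrak{L}\otimes G\oplus\mathbb{C}c_2).v_{\lambda}=0$ extends to all of $W_p$ for free, since vectors killed by an ideal form a submodule and $v_{\lambda}$ generates $W_p$; so $W_p$ is automatically a highest weight module for the finite-dimensional algebra $\mathfrak{sl}_2\otimes\mathbb{C}[t_2,t_2^{-1}]/G$. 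This costs nothing and also gains nothing: such a module can still be an infinite-dimensional Verma-type module unless the highest weight is already known to be dominant integral, which is what is to be proved.

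For comparison, the paper does not attempt $\dim W_p<\infty$: it passes to the \emph{irreducible} $\hat{\mathfrak{L}}$-quotient $V$ of $U(\hat{\mathfrak{L}})v_{\lambda}$, shows by the same ideal argument that $\mathfrak{L}\otimes G\oplus\mathbb{C}c_2$ kills all of $V$, concludes that $V$ is a module for a finite-dimensional Lie algebra and ``thus $\dim V<\infty$'', and only then invokes Proposition \ref{8} (legitimately, since the image of $v_{\lambda}$ in $V$ is nonzero and has the same $\alpha_p^{\vee}(0,n)$-eigenvalues, so the quotient-of-$B(a,\lambda)$ structure yields (\ref{e:condition})). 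So the paper's route differs from yours in one useful way---it only needs finite-dimensionality of the irreducible quotient, not of the cyclic module---but it is terse at precisely your sticking point: being a highest weight module, even an irreducible one, over $\mathfrak{sl}_2\otimes\mathbb{C}[t_2,t_2^{-1}]/G$ does not by itself force finite dimension, as the infinite-dimensional irreducible highest weight $\mathfrak{sl}_2$-modules with non-integral highest weight and one-dimensional weight spaces show. Ruling out that infinite string requires genuinely using the ambient $\mathfrak{T}'$-structure of $L(\lambda)$, and neither your proposal nor the paper's one-line conclusion supplies that argument. In short: you reproduce the paper's strategy and correctly isolate its crux, but the crux is left unproven, so the proposal cannot be accepted as a proof.
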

\begin{proof} For $p\neq0$, the algebra $\mathfrak{L}$ generated by
$\{x_{\alpha_p},x_{-\alpha_p},\alpha_{p}^{\vee}\}$ is isomorphic to
$\mathfrak{sl}_2$.
 Let $V$ be the irreducible quotient
of $U(\hat{\mathfrak{L}}).v_{\la}$, where
$\hat{\mathfrak{L}}=\mathfrak{L}\otimes\mathbb C[t_2,
t_2^{-1}]\oplus\mathbb{C}c_2$. Since the weight spaces of
$L(\lambda)$ are finite dimensional, the
set $\{x(-\alpha_p,n).v_{\la}, n\in\mathbb{Z}\}$ is
linearly dependent. Thus there exists a nonzero
polynomial $g=\sum\limits_i g_i t_2^i$ such that
$x_{-\alpha_p}\otimes g.v_{\la}=0$. Let $G(t_2)=g\mathbb C[t_2,
t_2^{-1}]$, then $x_{-\alpha_p}\otimes G.v_{\la}=0$. In fact,
$$0=(\alpha_{p}^{\vee}\otimes t_2^m)x_{-\alpha_p}\otimes g.v_{\la}=(x_{-\alpha_p}\otimes g)
\alpha_{p}^{\vee}\otimes t_2^m.v_{\la}-2x_{-\alpha_p}\otimes t_2^m
g.v_{\la}$$ and $\alpha_{p}^{\vee}\otimes
t_2^m.v_{\la}=\la(\alpha_{p}^{\vee}\otimes t_2^m).v_{\la}$.
Naturally, $\alpha_{p}^{\vee}\otimes G.v_{\la}=0$. Subsequently $(\mathfrak{L}\otimes
G\oplus\mathbb{C}c_2).v_{\la}=0$.

Set $W=\{v\in V|(\mathfrak{L}\otimes
G\oplus\mathbb{C}c_2).v=0\}$. Clearly $W$ is a nonzero submodule of
$V$. Thus $V=W$ due to irreducibility of $V$. Then $V$ is a
$\hat{\mathfrak{L}}/\mathfrak{L}\otimes
G\oplus\mathbb{C}c_2-$module, thus $\dim V<\infty$. By Prop.
\ref{8}, $\lambda$ satisfies the conditions of Theorem \ref{*}.
\end{proof}

\begin{remark} The above proof also shows that when $c_1$ and $c_2$ act trivially,
the irreducible ${\mathfrak
T}'$-module $L(\lambda)$ has finite dimensional weight spaces if and
only if there is an ideal $\mathcal {S}$ of $\mathbb C[t_2,
t_2^{-1}]$ such that $\lambda(\alpha_p^{\vee}\otimes \mathcal {S})=
0$, $p=1,2,\ldots,s$.
\end{remark}

\begin{corollary}\label{13}
Let $L(\lambda)$ be an irreducible ${\mathfrak T'}$-module with
finite dimensional weight spaces and suppose that the centers $c_1$ and $c_2$
act trivially. Then there exists an ideal $\mathcal {S}$ of $\mathbb
C[t_2, t_2^{-1}]$ such that $\widetilde{\mathfrak{g}}_1\otimes
\mathcal {S}.L(\lambda)=0$, where
$\widetilde{\mathfrak{g}}_1=\mathfrak{g}\otimes\mathbb{C}[t_1,t_1^{-1}]$.

\end{corollary}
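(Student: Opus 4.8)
The plan is to reduce the assertion to the single statement $(\widetilde{\mathfrak g}_1\otimes\mathcal S)v_\la=0$ for the imaginary highest weight vector $v_\la$, and then to establish this by climbing through the affine root system of $\hat{\mathfrak g}_1$; the genuine difficulty will be keeping the ideal $\mathcal S$ \emph{fixed} as the $t_1$-degree grows. First I would note that the preceding theorem forces $\la$ to satisfy condition (\ref{e:condition}), so by Theorem \ref{*} the module $L(\la)$ is integrable, and the Remark supplies a nonzero ideal $\mathcal S\subset\mathbb C[t_2,t_2^{-1}]$ with $\la(\alpha_p^\vee\otimes\mathcal S)=0$ for $p=1,\dots,s$. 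Since $\mathfrak g$ is simply laced, every positive coroot and $\theta^\vee$ lie in $\sum_p\mathbb Z_{\geq0}\alpha_p^\vee$; together with $c_1=0$ (so $\alpha_0^\vee=-\theta^\vee$) this upgrades the condition to $\la(\alpha^\vee\otimes\mathcal S)=0$ for every $\alpha\in\Delta$. Because $\mathcal S$ is an ideal the $t_2$-part of any bracket stays in $\mathcal S$ and $[d_1,-]$ preserves the $t_2$-degree, so $\widetilde{\mathfrak g}_1\otimes\mathcal S$ is an ideal of $\mathfrak T'$ modulo the center; as $c_1,c_2$ act as zero, $W=\{v\in L(\la)\mid(\widetilde{\mathfrak g}_1\otimes\mathcal S)v=0\}$ is a $\mathfrak T'$-submodule, and by irreducibility it then suffices to prove $v_\la\in W$ (after possibly shrinking $\mathcal S$ to a smaller ideal).

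Grading $\widetilde{\mathfrak g}_1\otimes\mathcal S$ by $t_1$-degree disposes of the easy pieces. Everything of positive $t_1$-degree, and every positive finite root vector at degree $0$, lies in $\mathfrak T_+$ and so kills $v_\la$; and $\mathfrak h\otimes\mathcal S$ kills $v_\la$ because $\la(\alpha_p^\vee\otimes\mathcal S)=0$ and the $\alpha_p^\vee$ span $\mathfrak h$. What remains is to annihilate $v_\la$ by $x_{-\be}\otimes\mathcal S$ for every negative affine real root vector $x_{-\be}\in\widetilde{\mathfrak g}_1$ (here $\be\in\Delta^{re}_{\hat{\mathfrak g}_1,+}$, including the degree-zero case $\be\in\Delta_+$), and by $\mathfrak h\otimes t_1^{-m}\otimes\mathcal S$ for $m\geq1$.

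The crux is the uniformity of the first of these over the $t_1$-degree, and here I would invoke Garland's Lemma \ref{14}. Fix $\be=\alpha+m\delta_1\in\Delta^{re}_{\hat{\mathfrak g}_1,+}$ and look at the $t_2$-loop $\mathfrak{sl}_2$ generated by $x(\be,\cdot)$, $x(-\be,\cdot)$ and $\be^\vee\otimes\mathbb C[t_2,t_2^{-1}]$. On $v_\la$ the raising operators $x(\be,n)$ act as zero, and since $c_1=0$ the coroot acts through its finite part, so $\be^\vee\otimes t_2^{j}$ acts by $\la(\alpha^\vee\otimes t_2^{j})$ and the integrability bound reads $x(-\be,0)^{\la(\alpha^\vee)+1}v_\la=0$; both data depend only on $\alpha$, not on $m$. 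Running Garland's identities exactly as in the proof of Theorem \ref{*}(2) then produces a linear recurrence in the $t_2$-degree satisfied by the sequence $n\mapsto x(-\be,n)v_\la$ whose characteristic polynomial $\chi_\alpha$ (equivalently, the Weyl-module datum of Proposition \ref{8}) is the \emph{same} for all $\be$ sharing the finite part $\alpha$. Thus $x_{-\be}\otimes(\chi_\alpha)$ kills $v_\la$ for every such $\be$, and since $\alpha$ runs over the finite set $\Delta$ I may replace $\mathcal S$ by the still nonzero ideal $\mathcal S\cap\bigcap_{\alpha\in\Delta}(\chi_\alpha)$ (all annihilations established above persist for this smaller ideal) to obtain $x_{-\be}\otimes\mathcal S\,v_\la=0$ simultaneously for all negative affine real roots. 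I expect this to be the main obstacle: a naive bracket-by-bracket induction fails, because $\delta_1=\alpha_0+\theta$ forces $m$ copies of $f_0=x_\theta\otimes t_1^{-1}$ to build degree $-m$, which would only yield $x_{-\be}\otimes\mathcal S^{|m|}v_\la=0$ with $\bigcap_m\mathcal S^{|m|}=0$; the point of the Garland recurrence is precisely that its data is controlled by $\la|_{\alpha^\vee\otimes\mathbb C[t_2]}$ and hence by the finitely many finite roots.

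Finally, for the imaginary vectors with $m\geq1$ I would write, modulo the center, $[x_\alpha\otimes t_1^{0}\otimes a,\,x_{-\alpha}\otimes t_1^{-m}\otimes b]=\alpha^\vee\otimes t_1^{-m}\otimes ab$ with $a\in\mathbb C[t_2,t_2^{-1}]$ and $b\in\mathcal S$. The first factor is a positive finite root vector lying in $\mathfrak T_+$, and the second is a negative affine real root vector already annihilating $v_\la$ by the previous step; since each factor kills $v_\la$ so does their bracket, giving $\alpha^\vee\otimes t_1^{-m}\otimes\mathcal S\,v_\la=0$, and letting $\alpha^\vee$ run over a basis of $\mathfrak h$ yields $\mathfrak h\otimes t_1^{-m}\otimes\mathcal S\,v_\la=0$. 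Assembling these cases gives $(\widetilde{\mathfrak g}_1\otimes\mathcal S)v_\la=0$, hence $v_\la\in W$ and $W=L(\la)$ by irreducibility, which is the desired conclusion.
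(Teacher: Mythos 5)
Your overall skeleton is fine and matches the paper: reducing to $(\widetilde{\mathfrak g}_1\otimes\mathcal S)v_\lambda=0$ via the submodule $W$ and irreducibility, killing the positive part and $\mathfrak h\otimes\mathcal S$, and obtaining $\mathfrak h\otimes t_1^{-m}\otimes\mathcal S\,v_\lambda=0$ as a bracket of two operators that each annihilate $v_\lambda$. The genuine gap is in your Garland/integrability step, and it occurs exactly at the family of roots you yourself flagged as the crux. Your bound ``$x(-\beta,0)^{\lambda(\alpha^{\vee})+1}v_\lambda=0$'' presupposes $\lambda(\alpha^{\vee})\in\mathbb Z_{\geq 0}$, which under (\ref{e:condition}) holds only when the finite part $\alpha$ of $\beta=\alpha+m\delta_1$ is a \emph{positive} root of $\mathfrak g$. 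For $\alpha\in\Delta_-$, $m\geq 1$ --- for instance $\beta=\alpha_0=\delta_1-\theta$, so that $x(-\beta,0)=x_\theta\otimes t_1^{-1}=f_0$ --- one has $\lambda(\alpha^{\vee})=-\lambda((-\alpha)^{\vee})\leq 0$, and for $\beta=\alpha_0$ this equals $-\lambda(\theta^{\vee})<0$ as soon as $\lambda$ is nonzero on $\mathfrak h\otimes\mathbb C[t_2,t_2^{-1}]$. In that case $x(-\beta,0)$ is not locally nilpotent on $v_\lambda$ in \emph{any} quotient of $M(\lambda)$: taking $e=x(\beta,0)$, $f=x(-\beta,0)$, $h=[e,f]$ (which acts by $\lambda(\alpha^{\vee})$ since $c_1=0$), the relations $ev_\lambda=0$ and $f^{N+1}v_\lambda=0$ with $N$ minimal force $\lambda(\alpha^{\vee})=N\geq 0$ by the standard $\mathfrak{sl}_2$ computation. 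So Garland's Lemma \ref{14} cannot even be launched for these $\beta$, no polynomial $\chi_\alpha$ exists for them, and the vectors $x_\gamma\otimes t_1^{-m}\otimes\mathcal S\,v_\lambda$ with $\gamma\in\Delta_+$, $m\geq 1$ --- including $f_0\otimes\mathcal S\,v_\lambda$, the very direction you worried about --- are never shown to vanish. Leaning harder on Theorem \ref{*}(1) cannot repair this: by the same computation, literal integrability of $L(\lambda)$ would force $\lambda(\alpha_p^{\vee})=0$ for every $p$, i.e.\ collapse everything to the trivial case.

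The paper closes precisely this hole with no nilpotency input at all, and its mechanism also dissolves your objection to induction. To get $y_{\alpha_i}\otimes s\,v_\lambda=0$ for \emph{all} affine simple roots, $i=0,1,\ldots,s$, one applies the generators $x_{\alpha_j}\otimes t_2^m$ of $\mathfrak T_+$ to this vector: the result is $\delta_{ij}\,\alpha_i^{\vee}\otimes t_2^m s\,v_\lambda=0$ (here $\alpha_0^{\vee}$ acts as $-\theta^{\vee}$, so its pairing with $\mathcal S$ vanishes by linearity, and $t_2^m s\in\mathcal S$ because $\mathcal S$ is an ideal); hence $y_{\alpha_i}\otimes s\,v_\lambda$ is annihilated by all of $\mathfrak T_+$, and since its weight differs from $\lambda$ by $-\alpha_i$ modulo $\mathbb Z\delta_2$, irreducibility of $L(\lambda)$ forces it to be zero. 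An induction on affine height repeats this verbatim for every negative root vector of $\hat{\mathfrak g}_1$ with the \emph{same} fixed ideal $\mathcal S$: one only ever brackets a vector carrying a single factor of $\mathcal S$ against raising operators carrying a bare $t_2^m$, so no powers $\mathcal S^{|m|}$ ever arise --- your ``naive induction fails'' objection applies to bracketing two $\mathcal S$-elements, which is never needed. Your Garland recurrence is a correct (if heavier) substitute for the roots with $\alpha\in\Delta_+$, essentially reproducing the paper's Theorem \ref{*}(2) argument, but for the roots with negative finite part you must switch to this extremal-vector/irreducibility argument.
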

\begin{proof}
First there exists an ideal $\mathcal S$ of
$\mathbb{C}[t_2,t_2^{-1}]$ such that $\lambda(\alpha_i^{\vee}\otimes
\mathcal {S})= 0$ for all $i=1,2,\ldots,s$. By the definition of
$L(\lambda)$, we have $x_{\alpha_i}\otimes \mathcal
{S}_{.}v_{\lambda}= 0$ for each $i=0,1,2,\ldots,s$. The next step is
to show $y_{\alpha_{i}}\otimes \mathcal {S}_.v_{\lambda}= 0$ for
$i=0,1,2,\ldots,s$. Since $x_{\alpha_j}\otimes t_2^m.
y_{\alpha_i}\otimes \mathcal
{S}.v_{\lambda}=\delta_{ji}\alpha_i^{\vee}\otimes \mathcal
{S}.v_{\lambda}= 0$ for arbitrary $j,i=0,1,2,\ldots,s$,
$m\in\mathbb{Z}$~(where we set $\alpha_0^{\vee}=-\theta^{\vee}$),
and $L(\lambda)$ is irreducible, one sees that
$y_{\alpha_i}\otimes \mathcal {S}_.v_{\lambda}= 0$ for
$i=0,1,2,\ldots,s$. Hence $\widetilde{\mathfrak{g}}_1\otimes
\mathcal {S}.v_{\lambda}=0$ by induction.

Now consider $\overline{W}=\{w\in L(\lambda),\widetilde{\mathfrak{g}}_1
\otimes \mathcal {S}.w=0 \}$, which is a submodule of $L(\lambda)$.
Then $L(\lambda)=\overline{W}$ by the irreducibility of
$L(\lambda)$, therefore $\widetilde{\mathfrak{g}}_1\otimes \mathcal
{S}.L(\lambda)=0$.
\end{proof}

Since $\la(c_1)=\la(c_2)=0$, $L(\lambda)$ can be
viewed as a module for the loop algebra $\mathfrak{g}\otimes
\mathbb{C}[t_1,t_1^{-1},t_2,t_2^{-1}]$.
The following
proposition is easily derived from Corollary \ref{13} and \cite[Prop. 3.8]{RF}.

\begin{proposition} Let $\mathcal {S}_1$, $\mathcal {S}_2$ be co-prime and co-finite ideals
of $\mathbb C[t_2, t_2^{-1}]$, and suppose that $\lambda$ and $\mu$ satisfy the
conditions in Theorem \ref{*}. Then $L(\lambda+\mu)\cong
L(\lambda)\otimes L(\mu)$.
\end{proposition}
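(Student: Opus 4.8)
The plan is to realize $L(\lambda)\otimes L(\mu)$ as an irreducible highest weight module of highest weight $\lambda+\mu$; since $L(\lambda+\mu)$ is by construction the unique irreducible quotient of the corresponding highest weight module, this will force the asserted isomorphism. First I would record the data supplied by Corollary \ref{13}: applied to $\lambda$ and to $\mu$ it produces the ideals so that $\widetilde{\mathfrak g}_1\otimes\mathcal S_1$ annihilates $L(\lambda)$ and $\widetilde{\mathfrak g}_1\otimes\mathcal S_2$ annihilates $L(\mu)$, and the standing hypothesis is that $\mathcal S_1,\mathcal S_2$ are co-prime and co-finite. Co-primeness forces the supports $\{a_{p,i}\}$ of $\lambda$ and of $\mu$ to be disjoint, so $(\lambda+\mu)(\alpha_p^\vee(0,n))$ is again a sum of exponentials, now over the disjoint union of points; thus $\lambda+\mu$ satisfies the conditions of Theorem \ref{*}, and its associated ideal is $\mathcal S_1\cap\mathcal S_2=\mathcal S_1\mathcal S_2$.

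Since $\lambda(c_i)=\mu(c_i)=0$, both factors descend to the loop algebra $\mathcal L=\mathfrak g\otimes\mathbb C[t_1^{\pm1},t_2^{\pm1}]$, and the diagonal action $\xi.(u\otimes w)=\xi u\otimes w+u\otimes\xi w$ makes $L(\lambda)\otimes L(\mu)$ an $\mathcal L$-module (the central terms vanish because both centers act by $0$). I would then check that $v_\lambda\otimes v_\mu$ is a cyclic highest weight vector: $\mathfrak T_0$ acts through $\lambda+\mu$, while $\mathfrak T_+$ annihilates it since it annihilates each factor. Hence $L(\lambda)\otimes L(\mu)$ is a highest weight module of weight $\lambda+\mu$, and the universal property gives a surjection $M(\lambda+\mu)\twoheadrightarrow U(\mathfrak T)(v_\lambda\otimes v_\mu)\subseteq L(\lambda)\otimes L(\mu)$.

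The decisive step is the decoupling coming from co-primeness. Writing $1=f+g$ with $f\in\mathcal S_1$ and $g\in\mathcal S_2$, for any $\xi=x\otimes t_1^mt_2^n$ we have $\xi=\xi f+\xi g$, where $t_2^nf\in\mathcal S_1$ and $t_2^ng\in\mathcal S_2$; thus $\xi f$ annihilates $L(\lambda)$ and $\xi g$ annihilates $L(\mu)$. The diagonal action therefore becomes $\xi.(u\otimes w)=(\xi g)u\otimes w+u\otimes(\xi f)w$, so $\mathcal L$ acts through its quotient $\mathcal L/(\widetilde{\mathfrak g}_1\otimes\mathcal S_1\mathcal S_2)$. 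By the Chinese Remainder Theorem this quotient splits as a direct sum of ideals $\mathcal L_\lambda\oplus\mathcal L_\mu$, with $\mathcal L_\lambda=\mathfrak g\otimes\mathbb C[t_1^{\pm1}]\otimes(\mathbb C[t_2^{\pm1}]/\mathcal S_1)$ acting on the first factor and $\mathcal L_\mu$ on the second; consequently $L(\lambda)\otimes L(\mu)$ is exactly the outer tensor product $L(\lambda)\boxtimes L(\mu)$ over $\mathcal L_\lambda\oplus\mathcal L_\mu$.

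The hard part will be the irreducibility of this outer tensor product, which is precisely what \cite[Prop.~3.8]{RF} supplies. The mechanism is a Schur-type argument: each of $L(\lambda),L(\mu)$ is an irreducible weight module with countable-dimensional weight spaces, so $\mathrm{End}(L(\lambda))=\mathrm{End}(L(\mu))=\mathbb C$ by Dixmier's lemma over the uncountable field $\mathbb C$, and the Jacobson density theorem applied to the two commuting ideals then yields irreducibility of $L(\lambda)\boxtimes L(\mu)$ over $\mathcal L_\lambda\oplus\mathcal L_\mu$, hence over $\mathcal L$, hence over $\mathfrak T$. Granting this, $v_\lambda\otimes v_\mu$ generates the whole module, so the surjection from $M(\lambda+\mu)$ factors through its irreducible quotient and delivers $L(\lambda+\mu)\cong L(\lambda)\otimes L(\mu)$.
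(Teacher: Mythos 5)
Your proof is correct and takes essentially the same route as the paper: the paper's entire proof is the remark that the proposition ``is easily derived from Corollary \ref{13} and \cite[Prop. 3.8]{RF},'' and your argument is exactly that derivation --- Corollary \ref{13} supplies the annihilating ideals $\widetilde{\mathfrak{g}}_1\otimes\mathcal S_1$ and $\widetilde{\mathfrak{g}}_1\otimes\mathcal S_2$, the co-primeness/Chinese-Remainder decoupling exhibits $L(\lambda)\otimes L(\mu)$ as an outer tensor product, and the irreducibility step you sketch via Dixmier's lemma and density is precisely what \cite[Prop. 3.8]{RF} provides. Your final identification through the unique irreducible quotient of $M(\lambda+\mu)$ is the routine completion the paper leaves implicit.
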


Rao \cite{SE1} and Chang-Tan \cite{CT}
have shown respectively that irreducible integrable modules for toroidal and TKK modules
with finite dimensional weight spaces and $c_1>0, c_2=0$ are highest weight
modules. In general our modules do not seem to be of highest weight type when $c_1=c_2=0$.

\bigskip

\centerline{\bf Acknowledgments} NJ
acknowledges the partial support of Simons Foundation grant 198129,
NSFC grant 11271138 and hospitality of Max-Planck Institute for Mathematics
in the Sciences at Leipzig during this work.

\bibliographystyle{amsalpha}

\end{document}